\newtheorem{teor}{Theorem}[section]
\newtheorem{corollary}[teor]{Corollary}
\newtheorem{prop}[teor]{Proposition}
\newtheorem*{remark*}{Remark}
\newcounter{theor}
\newtheorem{theorem}[theor]{Theorem}
\theoremstyle{definition}
\newtheorem{definition}[teor]{Definition}
\def\K{\mathcal{K}}
\def\L{\mathcal{L}}
\def\R{\mathbb{R}}
\def\vol{\mathrm{vol}}
\def\Stsym#1{S_{\!{#1}}}
\newcommand{\enorm}[1]{\left|{#1}\right|}
\newcommand{\dlat}{\mathrm{d}}
\def\proj{\mathrm{proj}}
\def\epi{\mathrm{epi}}
\def\stepi{\mathrm{epi}_s}
\def\sthyp{\mathrm{hyp}_s}
\numberwithin{equation}{section}
\begin{document}

\title{On a linear refinement of the Pr\'ekopa-Leindler inequality}

\author{A. Colesanti}
\address{Dipartimento di Matematica ``U. Dini'', Viale Morgagni 67/A,
50134-Firenze, Italy}
\email{colesant@math.unifi.it}
\author{E. Saor\'in G\'omez}
\address{Institut f\"ur Algebra und Geometrie, Otto-von-Guericke Universit\"at Magdeburg,
Universit\"atsplatz 2, D-39106-Magdeburg, Germany}
\email{eugenia.saorin@ovgu.de}
\author{J. Yepes Nicol\'as}
\address{Departamento de Matem\'aticas, Universidad de Murcia, Campus de
Espinar\-do, 30100-Murcia, Spain}
\email{jesus.yepes@um.es}

\thanks{Second author is supported by Direcci\'on General de Investigaci\'on MTM2011-25377
MCIT and FEDER.
Third author is supported by MINECO project MTM2012-34037.}

\subjclass[2000]{Primary 52A40, 26D15; Secondary 26B25}

\keywords{Pr\'ekopa-Leindler inequality, linearity, Asplund sum, projections, Borell-Brascamp-Lieb inequality}

\begin{abstract}
If $f,g:\R^n\longrightarrow\R_{\geq0}$ are non-negative measurable functions, then the Pr\'ekopa-Leindler inequality asserts that the integral of the Asplund sum (provided that it is measurable) is greater or equal than the $0$-mean of the integrals of $f$ and $g$.
In this paper we prove that under the sole assumption that $f$ and $g$ have
a common projection onto a hyperplane, the Pr\'ekopa-Leindler inequality admits a linear refinement. Moreover, the same inequality can be obtained when assuming that both projections (not necessarily equal as functions) have the same integral. An analogous approach may be also carried out for the so-called Borell-Brascamp-Lieb inequality.
\end{abstract}

\maketitle

\section{Introduction}

The Pr\'ekopa-Leindler inequality, originally proved in
\cite{Prekopa} and \cite{Leindler}, states that if $\lambda\in (0,1)$  and $f,g,h:\R^n\longrightarrow\R_{\geq0}$ are non-negative measurable
functions such that, for any $x,y\in\R^n$,
\begin{equation}\label{e:PrekopaLeindlerCondicion}
h\bigl((1-\lambda) x+\lambda y\bigr)\geq f(x)^{1-\lambda}g(y)^{\lambda},
\end{equation}
then
\begin{equation*}
\int_{\R^n}h\,\dlat x\geq
\left(\int_{\R^n}f\,\dlat x\right)^{1-\lambda}\left(\int_{\R^n}g\,\dlat x\right)^{\lambda}.
\end{equation*}
A more stringent version of this result can be obtained considering the smallest function $h$ verifying condition \eqref{e:PrekopaLeindlerCondicion},
given $f$, $g$ and $\lambda$. Such a function is nothing but the so-called {\em Asplund sum} of $f$ and $g$, defined as follows (see e.g. \cite[p.~517]{Sch2}).

\begin{definition}
Given two non-negative functions $f,g:\R^n\longrightarrow\R_{\geq0}$ and $\lambda\in(0,1)$,
the function $(1-\lambda) f\star\lambda g:\R^n\longrightarrow\R_{\geq0}\cup\{\infty\}$ is defined by
\begin{equation*}
(1-\lambda) f\star\lambda g \,(x)=\sup_{(1-\lambda) x_1+\lambda x_2=x} f(x_1)^{1-\lambda} g(x_2)^{\lambda}.
\end{equation*}
\end{definition}

It is worth noting that the assumption that $f$ and $g$ are measurable
is not sufficient to guarantee that $(1-\lambda) f\star\lambda g$ is measurable; see
\cite[Section 10]{G}. The Pr\'ekopa-Leindler inequality can be now rephrased in the following way.

\begin{theorem}[Pr\'ekopa-Leindler Inequality]\label{t:PrekopaLeindler}
Let $\lambda\in (0,1)$ and let $f,g:\R^n\longrightarrow\R_{\geq0}$ be non-negative measurable
functions such that $(1-\lambda) f\star\lambda g$ is measurable as well. Then
\begin{equation}\label{e:PrekopaLeindler}
\int_{\R^n}(1-\lambda) f\star\lambda g\,\dlat x\geq
\left(\int_{\R^n}f\,\dlat x\right)^{1-\lambda}\left(\int_{\R^n}g\,\dlat x\right)^{\lambda}.
\end{equation}
\end{theorem}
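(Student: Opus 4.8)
The plan is to prove the inequality by induction on the dimension $n$, working with the more flexible formulation underlying \eqref{e:PrekopaLeindlerCondicion}: I will show that whenever $f,g,h:\R^n\longrightarrow\R_{\geq0}$ are non-negative measurable functions satisfying $h\bigl((1-\lambda)x+\lambda y\bigr)\geq f(x)^{1-\lambda}g(y)^\lambda$ for all $x,y$, one has $\int_{\R^n}h\geq\bigl(\int_{\R^n}f\bigr)^{1-\lambda}\bigl(\int_{\R^n}g\bigr)^\lambda$. Theorem \ref{t:PrekopaLeindler} then follows by taking $h=(1-\lambda)f\star\lambda g$, which is by construction the smallest function fulfilling \eqref{e:PrekopaLeindlerCondicion} and is measurable by hypothesis. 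Inducting on this condition form rather than on the Asplund sum directly is what keeps the argument clean, since it spares me from controlling the measurability of the Asplund sums of the lower-dimensional slices that arise in the induction, which is the delicate point flagged after the definition.

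For the base case $n=1$ I would first note that both sides of the desired inequality scale the same way under $f\mapsto\alpha f$, $g\mapsto\beta g$ (which sends $h\mapsto\alpha^{1-\lambda}\beta^\lambda h$), so I may normalize $\sup f=\sup g=1$, the cases $\int f=0$ or $\int g=0$ being trivial. The key observation is the set inclusion
\[(1-\lambda)\{f\geq t\}+\lambda\{g\geq t\}\subseteq\{h\geq t\},\qquad t\in(0,1),\]
which follows at once from \eqref{e:PrekopaLeindlerCondicion}, since $f(x)\geq t$ and $g(y)\geq t$ force $h\bigl((1-\lambda)x+\lambda y\bigr)\geq t^{1-\lambda}t^\lambda=t$. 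Combining the one-dimensional estimate $\bigl|(1-\lambda)A+\lambda B\bigr|\geq(1-\lambda)|A|+\lambda|B|$ with the layer-cake formula $\int h=\int_0^\infty|\{h\geq t\}|\,\dlat t$ (the superlevel sets being empty above $t=1$) yields
\[\int_\R h\,\dlat x\geq(1-\lambda)\int_\R f\,\dlat x+\lambda\int_\R g\,\dlat x,\]
and a final application of the arithmetic--geometric mean inequality produces the required bound.

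For the inductive step I would slice along the last coordinate, writing points of $\R^n$ as $(x',s)$ with $x'\in\R^{n-1}$, $s\in\R$, and set $f_s(x')=f(x',s)$, and likewise $g_s$, $h_s$. If $s=(1-\lambda)s_1+\lambda s_2$, then restricting \eqref{e:PrekopaLeindlerCondicion} to these slices shows that the triple $(f_{s_1},g_{s_2},h_s)$ satisfies the $(n-1)$-dimensional condition; by the inductive hypothesis the marginals $F(s)=\int_{\R^{n-1}}f_s\,\dlat x'$, $G(s)=\int_{\R^{n-1}}g_s\,\dlat x'$, $H(s)=\int_{\R^{n-1}}h_s\,\dlat x'$ therefore satisfy $H\bigl((1-\lambda)s_1+\lambda s_2\bigr)\geq F(s_1)^{1-\lambda}G(s_2)^\lambda$. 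This is exactly the one-dimensional condition for $F,G,H$, so the case $n=1$ gives $\int_\R H\geq\bigl(\int_\R F\bigr)^{1-\lambda}\bigl(\int_\R G\bigr)^\lambda$, and Fubini's theorem rewrites this as \eqref{e:PrekopaLeindler}.

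The main obstacle is measurability, and it has to be attended to at each stage rather than swept aside. One must ensure that the slices $h_s$ are measurable for almost every $s$ and that the marginal $H$ is itself a measurable function of $s$; both follow from Tonelli--Fubini once $h$ is known to be measurable, which is precisely the hypothesis of Theorem \ref{t:PrekopaLeindler}, while the analogous facts for $f$ and $g$ are automatic. I expect the only genuinely careful bookkeeping to be the observation that the inductive hypothesis can be invoked only for almost every pair $(s_1,s_2)$, so that the inequality for $H$ holds off a null set; one then checks that $F,G,H$ still form an admissible triple for the one-dimensional inequality, either by repeating the layer-cake argument in the presence of this null set or by suitably modifying the functions on it.
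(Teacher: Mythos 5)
Correct, and this takes the same route the paper itself endorses: the paper does not prove this classical theorem but notes in the Introduction that it follows by induction on the dimension $n$, with the case $n=1$ deduced from the one-dimensional Brunn--Minkowski inequality (citing Pisier and Gardner's survey), which is exactly your argument. The remaining details are routine and you flag the genuinely delicate one yourself: besides the a.e.\ bookkeeping in the inductive step, one should truncate ($f\mapsto\min(f,M)$, then let $M\to\infty$) so that the normalization $\sup f=\sup g=1$ is legitimate, and use inner measure (compact exhaustion) in the base case, since $(1-\lambda)\{f\geq t\}+\lambda\{g\geq t\}$ need not be measurable even though it sits inside the measurable set $\{h\geq t\}$.
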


This functional inequality can be seen as the analytic counterpart of a geometric inequality, i.e., the Brunn-Minkowski inequality.
Let $\lambda\in[0,1]$ and let $A$ and $B$ be two nonempty (Lebesgue) measurable subsets of $\R^n$, such that their vector linear combination
$$
(1-\lambda)A+\lambda B=\{(1-\lambda)x+\lambda y\,:\,x\in A\,,\,y\in B\}
$$
is also measurable. Then
\begin{equation}\label{BM1}
\vol((1-\lambda)A+\lambda B)^{1/n}\ge
(1-\lambda)\vol(A)^{1/n}+\lambda\vol(B)^{1/n}
\end{equation}
where by $\vol(\cdot)$ we denote the Lebesgue measure. The Brunn-Minkowski inequality admits an equivalent form,
often referred to as its multiplicative or dimension free version:
\begin{equation}\label{e:mult_B-M_ineq}
\vol\bigl((1-\lambda)K+\lambda
L\bigr)\geq\vol(K)^{1-\lambda}\vol(L)^{\lambda}.
\end{equation}
Note that a straightforward proof of \eqref{e:mult_B-M_ineq} can be obtained by applying
\eqref{e:PrekopaLeindler} to characteristic functions. Indeed, for $A,B\subset\R^n$,
\begin{equation}\label{e:AsplCaracFunc}
(1-\lambda) \chi_{_A}\star\lambda \chi_{_B}=\chi_{_{(1-\lambda) A+\lambda B}}
\end{equation}
where $\chi$ denotes the characteristic function. On the other hand, the Pr\'ekopa-Leindler
inequality can be proved by induction on the dimension $n$, and the initial case $n=1$ follows
easily from \eqref{BM1} (see for instance \cite[p.~3]{Pisier} or \cite[Section 7]{G}).

Inequalities \eqref{e:PrekopaLeindler} and \eqref{BM1} have a strong link with convexity, as is shown by
the description of the equality conditions. Indeed, equality may occur in \eqref{e:PrekopaLeindler} if and only if,
roughly speaking, there exists a {\it log-concave} function $F$ (i.e., $F=e^{-u}$,
where $u$ is convex), such that $f$, $g$ and $h$ coincide a.e. with $F$, up to translations and rescaling of the coordinates
(see \cite{Dub}). As a consequence, equality holds in the Brunn-Minkowski inequality if and only if $A$ and $B$ are two
homothetic compact convex sets, up to subsets negligible with respect to the Lebesgue measure.

The Brunn-Minkowski inequality is one of the most powerful results in convex geometry and, together with its analytic
companion \eqref{e:PrekopaLeindler} has a wide range of applications in analysis, probability, information theory and other areas
of mathematics. We refer the reader to the updated monograph \cite{Sch2}, entirely devoted to convex geometry, and to the extensive
and detailed survey \cite{G} concerning the Brunn-Minkowski inequality.

\medskip

In \cite[Section 50]{BF}, linear refinements of the Brunn-Minkowski inequality
were obtained for {\it convex bodies} (i.e., nonempty compact convex sets) having a common
orthogonal projection onto a hyperplane, or more generally a
projection with the same $(n-1)$-dimensional volume. To state these results we need some notation:
$\K^n$ denotes the set of convex bodies in $\R^n$; $\L^n_{n-1}$ is the set of
$(n-1)$-dimensional subspaces of $\R^n$ (i.e., hyperplanes containing the origin) and, given $K\in\K^n$ and $H\in\L^n_{n-1}$,
$K|H$ is the orthogonal projection of $K$ onto $H$ (which is a convex body as well). Moreover,
$\vol_{n-1}(\cdot)$ denotes the $(n-1)$-dimensional Lebesgue measure in $\R^n$.

\begin{theorem}[{\cite{BF,Gi}}]\label{t:BMproye}
Let $K,L\in\K^n$ be convex bodies such that there exists
$H\in\L^n_{n-1}$ with $K|H=L|H$. Then, for all $\lambda\in[0,1]$,
\begin{equation*}\label{e:BMproye}
\vol\bigl((1-\lambda)K+\lambda L\bigr)
\geq(1-\lambda)\vol(K)+\lambda\vol(L).
\end{equation*}
\end{theorem}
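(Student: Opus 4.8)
The plan is to prove the linear lower bound by slicing both bodies in the direction orthogonal to the common projection hyperplane and reducing everything to a trivial one-dimensional estimate on each fiber. After a rotation we may assume $H=e_n^{\perp}$ and write points of $\R^n$ as $(x,t)$ with $x\in H\cong\R^{n-1}$ and $t\in\R$. Set $P:=K|H=L|H\in\K^{n-1}$, the common projection, and $M_\lambda:=(1-\lambda)K+\lambda L$. Since the orthogonal projection onto $H$ is linear and $P$ is convex, one has $M_\lambda|H=(1-\lambda)P+\lambda P=P$, so all three bodies project onto the same base $P$. For $x\in P$ I denote by $\ell_K(x)$, $\ell_L(x)$ and $\ell_{M_\lambda}(x)$ the lengths of the (nonempty, compact) vertical fibers $\{t:(x,t)\in K\}$, and likewise for $L$ and $M_\lambda$; by Fubini's theorem $\vol(K)=\int_P\ell_K(x)\,\dlat x$, and similarly for $L$ and $M_\lambda$.

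The key step is the pointwise fiber inequality
\[
\ell_{M_\lambda}(x)\ \geq\ (1-\lambda)\ell_K(x)+\lambda\ell_L(x)\qquad\text{for every }x\in P.
\]
This is where the common-projection hypothesis enters decisively: because $x$ lies in both $K|H$ and $L|H$, the fibers of $K$ and of $L$ over the \emph{same} base point $x$ are simultaneously nonempty. Hence, for any $t_1$ with $(x,t_1)\in K$ and any $t_2$ with $(x,t_2)\in L$, the point $(1-\lambda)(x,t_1)+\lambda(x,t_2)=\bigl(x,(1-\lambda)t_1+\lambda t_2\bigr)$ belongs to $M_\lambda$. Consequently the vertical fiber of $M_\lambda$ over $x$ contains the Minkowski combination of the two parallel segments $\{t:(x,t)\in K\}$ and $\{t:(x,t)\in L\}$, which is an interval of length exactly $(1-\lambda)\ell_K(x)+\lambda\ell_L(x)$. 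Taking lengths yields the displayed inequality.

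Integrating over $P$ and invoking Fubini once more gives
\[
\vol(M_\lambda)=\int_P\ell_{M_\lambda}(x)\,\dlat x\ \geq\ (1-\lambda)\int_P\ell_K(x)\,\dlat x+\lambda\int_P\ell_L(x)\,\dlat x=(1-\lambda)\vol(K)+\lambda\vol(L),
\]
which is the asserted linear refinement; the cases $\lambda\in\{0,1\}$ are trivial. I expect no serious technical obstacle: measurability and Fubini are automatic for convex bodies, since the fibers are compact intervals and the upper/lower edge functions are concave/convex, hence Borel. The entire content sits in the one-line fiber containment, and the only thing to watch is that it genuinely requires the fibers of $K$ and $L$ over $x$ to be simultaneously nonempty — it is precisely the failure of this when $K|H\neq L|H$ that forces one to settle for the weaker (concave) Brunn--Minkowski bound in general. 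I would also note that in the borderline case $n=1$ the fiber statement degenerates to the identity $\vol\bigl((1-\lambda)K+\lambda L\bigr)=(1-\lambda)\vol(K)+\lambda\vol(L)$ for segments, so the whole argument may be read as integrating this trivial one-dimensional equality over the common base $P$.
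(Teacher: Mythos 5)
Your proof is correct and follows essentially the same route as the paper: there, Theorem \ref{t:BMproye} is obtained as the characteristic-function case ($f=\chi_{_K}$, $g=\chi_{_L}$) of Theorem \ref{t:PL-commonproj}, whose proof is exactly your argument — decompose over the common projection, use the simultaneous nonemptiness of the fibers to get the Minkowski-sum containment on each line $h+H^\perp$, apply the one-dimensional Brunn--Minkowski inequality (which for intervals is just additivity of lengths), and integrate via Fubini. Your version simply carries this out directly in the geometric setting rather than through the functional (Asplund sum) machinery.
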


In other words, the volume itself is a concave function on the ``segment'' joining $K$ and $L$ in $\K^n$.

\begin{theorem}[{\cite{BF,Gi,Oh}}]\label{t:BMproye_vol}
Let $K,L\in\K^n$ be convex bodies such that there exists
$H\in\L^n_{n-1}$ with $\vol_{n-1}(K|H)=\vol_{n-1}(L|H)$. Then, for all
$\lambda\in[0,1]$,
\begin{equation}\label{e:BMproye_vol}
\vol\bigl((1-\lambda)K+\lambda L\bigr)
\geq(1-\lambda)\vol(K)+\lambda\vol(L).
\end{equation}
\end{theorem}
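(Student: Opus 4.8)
The plan is to choose coordinates so that $H=e_n^\perp$, writing each point of $\R^n$ as $(u,t)$ with $u\in\R^{n-1}\cong H$ and $t\in\R$, and then to estimate $\vol$ by integrating fibre lengths over the base projections, transporting one base onto the other by a measure-preserving map. For $M\in\K^n$ write $P_M:=M|H\subset\R^{n-1}$, and for $u\in P_M$ let $\ell_M(u)$ be the length of the segment $M\cap(\{u\}\times\R)$; convexity makes $\ell_M$ a non-negative concave (hence measurable) function on $P_M$, and Fubini gives $\vol(M)=\int_{P_M}\ell_M(u)\,\dlat u$. Put $C:=(1-\lambda)K+\lambda L$, so $P_C=(1-\lambda)P_K+\lambda P_L$. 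The first, elementary, ingredient is the one-dimensional Brunn--Minkowski inequality used fibrewise: if $w=(1-\lambda)u+\lambda u'$ with $u\in P_K$ and $u'\in P_L$, then $C\cap(\{w\}\times\R)$ contains $(1-\lambda)\bigl(K\cap(\{u\}\times\R)\bigr)+\lambda\bigl(L\cap(\{u'\}\times\R)\bigr)$, whence
\[
\ell_C(w)\ \ge\ (1-\lambda)\ell_K(u)+\lambda\ell_L(u').
\]

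The hypothesis $\vol_{n-1}(P_K)=\vol_{n-1}(P_L)=:v$ is what I would use to transport $P_K$ onto $P_L$ while preserving Lebesgue measure (I assume $v>0$; if $v=0$ then $\vol(K)=\vol(L)=0$ and the inequality is trivial). Concretely, let $T\colon P_K\to P_L$ be the \emph{Knothe map} carrying the uniform probability density of $P_K$ to that of $P_L$: it is injective and differentiable a.e., its differential $\D T(u)$ is lower triangular with strictly positive diagonal entries $\delta_1(u),\dots,\delta_{n-1}(u)$, and the equal-volume normalisation forces $\det\D T(u)=\prod_i\delta_i(u)=1$ for a.e.\ $u$. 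I then define $\Phi\colon P_K\to P_C$ by $\Phi(u)=(1-\lambda)u+\lambda T(u)$; it inherits the triangular monotone structure of $T$, so it is injective with $\Phi(P_K)\subseteq P_C$, and $\D\Phi(u)=(1-\lambda)I+\lambda\,\D T(u)$ is lower triangular with diagonal entries $(1-\lambda)+\lambda\delta_i(u)$.

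Since $\ell_C\ge0$ and $\Phi(P_K)\subseteq P_C$, the change of variables $w=\Phi(u)$ gives
\[
\vol(C)=\int_{P_C}\ell_C\,\dlat w\ \ge\ \int_{P_K}\ell_C\bigl(\Phi(u)\bigr)\,\det\D\Phi(u)\,\dlat u,
\]
and the fibrewise bound, applied with $u'=T(u)$, replaces the integrand by the smaller quantity $\bigl[(1-\lambda)\ell_K(u)+\lambda\ell_L(T(u))\bigr]\det\D\Phi(u)$. The crucial step, and the only place where the equal-volume hypothesis enters, is the weighted arithmetic--geometric mean inequality applied to each diagonal factor:
\[
\det\D\Phi(u)=\prod_{i=1}^{n-1}\bigl((1-\lambda)+\lambda\delta_i(u)\bigr)\ \ge\ \prod_{i=1}^{n-1}\delta_i(u)^{\lambda}=\Bigl(\prod_{i=1}^{n-1}\delta_i(u)\Bigr)^{\lambda}=1.
\]
As the bracket is non-negative I may drop the factor $\det\D\Phi\ge1$, and then undo the measure-preserving substitution $u'=T(u)$ (using $\det\D T=1$) to conclude
\[
\vol(C)\ \ge\ (1-\lambda)\int_{P_K}\ell_K\,\dlat u+\lambda\int_{P_L}\ell_L\,\dlat u'=(1-\lambda)\vol(K)+\lambda\vol(L).
\]

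I expect the determinant estimate to be the conceptual heart: it explains why equal projection \emph{volume} is exactly the right hypothesis, since for an arbitrary measure-preserving map $\det\bigl((1-\lambda)I+\lambda\,\D T\bigr)\ge1$ may fail (e.g.\ if $\D T$ is a rotation), and one is forced to use a transport whose differential has positive real spectrum — the triangular Knothe map, or the symmetric Brenier map. The main technical obstacle is therefore to justify rigorously the change of variables for this merely a.e.-differentiable, injective map (validity of the substitution and of the identity $\det\D T=1$ a.e.); once these regularity points are handled the inequality follows as above. Finally, I would note that when $P_K=P_L$ one takes $T=\mathrm{id}$, $\Phi=\mathrm{id}$, and the argument collapses to the fibrewise proof of Theorem \ref{t:BMproye}, so the present scheme unifies the two statements.
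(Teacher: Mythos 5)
Your argument is correct in substance, but it takes a genuinely different route from the paper. Here Theorem \ref{t:BMproye_vol} is quoted from \cite{BF,Gi,Oh}, and the paper's own derivation of it (Corollary \ref{c:BMproye_vol} applied to $f=\chi_{_K}$, $g=\chi_{_L}$; the measurability hypotheses there are automatic for convex bodies, as the remark following that corollary explains) proceeds by symmetrization rather than by transport: one first Steiner-symmetrizes both bodies with respect to $H$, which preserves volumes and projections onto $H$ (Proposition \ref{p:propStsym}), and then applies the Schwarz-type symmetrization with respect to $H^\perp$, which replaces each slice parallel to $H$ by its $(n-1)$-dimensional volume times the fixed unit-volume ball $B_{n-1}\subset H$. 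After these two steps the hypothesis $\vol_{n-1}(K|H)=\vol_{n-1}(L|H)$ has become literal equality of the projections onto $H$, so the common-projection result (Theorem \ref{t:PL-commonproj}, the functional form of Theorem \ref{t:BMproye}) applies; the compatibility of the symmetrizations with Minkowski/Asplund addition (Propositions \ref{p:InclStSymm}, \ref{p:desigsim_1} and \ref{p:desigsim_2}) then transports the inequality back to the original bodies. Your proof replaces this reduction by a Knothe map $T$ carrying $K|H$ onto $L|H$: the fibrewise one-dimensional Brunn-Minkowski step is the same as in the paper's proof of Theorem \ref{t:PL-commonproj}, but the equal-volume hypothesis enters through $\det\D T=1$ and the AM-GM bound $\det\bigl((1-\lambda)I+\lambda\,\D T\bigr)\ge(\det\D T)^{\lambda}=1$; this is the natural adaptation of Knothe's proof of the Brunn-Minkowski inequality, and your observation that $T=\mathrm{id}$ recovers Theorem \ref{t:BMproye} mirrors the paper's reduction of Theorem \ref{t:BMproye_vol} to Theorem \ref{t:BMproye}. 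What each approach buys: yours is shorter and self-contained for convex bodies, and it isolates exactly why equal projection \emph{volume} (rather than equal projection) suffices; the paper's avoids all transport regularity, and precisely for that reason extends to arbitrary measurable functions and sets (Theorems \ref{t:PLcommon_int_pro} and \ref{t:PL-comvolsect}), which is the point of the paper.

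The one issue you should not leave as a closing remark is the change-of-variables step, since the Knothe map is in general only a.e.\ differentiable and the area formula can fail for arbitrary injective a.e.-differentiable maps. What you actually need is only the ``easy'' direction
\begin{equation*}
\int_{P_K}\bigl(\ell_C\circ\Phi\bigr)\,\det\D\Phi\,\dlat u\;\le\;\int_{\Phi(P_K)}\ell_C\,\dlat w,
\end{equation*}
together with $\det\D T=1$ a.e.; both can be justified either by decomposing $P_K$ into countably many pieces on which $\Phi$ is Lipschitz and a null set, or, exploiting the triangular structure, by iterated one-dimensional monotone substitutions (for $g$ increasing and $h\ge0$ one always has $\int h(g(t))\,g'(t)\,\dlat t\le\int h(s)\,\dlat s$, without absolute continuity). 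These facts are standard and are already needed to make Knothe's classical proof rigorous, so the gap is fillable; but as written it is the only place where your argument is incomplete.
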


These results have been extended to compact sets  in \cite{Oh} and more recently
in \cite[Subsection 1.2.4]{Gi}; see also \cite{HCYN2} for related topics.

We would like to point out that, contrary to Theorem \ref{t:BMproye}, Theorem \ref{t:BMproye_vol} does not provide the concavity of the function $f(\lambda)=\vol\bigl((1-\lambda)K+\lambda L\bigr)$ for $\lambda\in[0,1]$. More precisely, inequality \eqref{e:BMproye_vol} only yields the condition $f(\lambda)\geq(1-\lambda)f(0)+\lambda f(1)$; nevertheless, when working with convex bodies $K$ and $L$ having a common projection onto a hyperplane, it is easy to check that the above condition implies indeed concavity of $f$ (see e.g. the proof of \cite[Theorem~2.1.3]{HCYN2}).
On the other hand, in the paper \cite{Di}, Diskant constructed an example
where the above-mentioned function is not concave under the sole assumption of a common volume projection (the bodies used by Diskant are essentially a {\em cap body} of a ball and a half-ball). For further details about this topic we refer to Notes for Section $7.7$ in \cite{Sch2} and the references therein.

\medskip

At this point it is natural to wonder whether analogous results to
Theorems \ref{t:BMproye} and \ref{t:BMproye_vol} could be obtained for Pr\'ekopa-Leindler inequality. The aim of this paper
is to provide an answer to this question. As a first step we notice that there is a rather natural way to define the ``projection''
of a function (see for instance \cite{KM}).

\begin{definition}
Given $f:\R^n\longrightarrow\R_{\geq0}\cup\{\infty\}$ and $H\in\L^n_{n-1}$, the \emph{projection} of $f$ onto $H$ is the (extended) function
$\proj_H(f)\,:\,H\longrightarrow\R_{\geq0}\cup\{\infty\}$ defined by
\begin{equation*}
\proj_H(f)(h)=\sup_{\alpha\in\R} f(h+\alpha \nu)
\end{equation*}
for $h\in H$, where $\nu$ is a normal unit vector of $H$.
\end{definition}

The geometric idea behind this definition is very simple: the hypograph of the projection
of $f$ onto $H$ is the projection of the hypograph of $f$ onto $H$. In particular, the projection of the characteristic function of a set $A$ is
just the characteristic function of the projection of $A$.

In this paper we show that under the equal projection assumption for the functions $f$ and $g$, the Pr\'ekopa-Leindler inequality becomes linear in $\lambda$. This is the analytical counterpart of Theorem \ref{t:BMproye}. Indeed, taking $f=\chi_{_K}$ and $g=\chi_{_L}$,
Theorem \ref{t:BMproye} may be obtained as a corollary.

\begin{teor}\label{t:PL-commonproj} Let $\lambda\in(0,1)$ and let $f,g:\R^n\longrightarrow\R_{\geq0}$ be non-negative measurable functions
such that $(1-\lambda) f\star\lambda g$ is measurable.
If there exists
$H\in\L^n_{n-1}$ such that
\begin{equation*}
  \proj_H(f)=\proj_H(g)
\end{equation*}
then
\begin{equation}\label{e:PL-commonproj}
   \int_{\R^n}(1-\lambda) f\star\lambda g\,\dlat x \geq (1-\lambda)\int_{\R^n}f\,\dlat x +\lambda \int_{\R^n}g\,\dlat x.
\end{equation}
\end{teor}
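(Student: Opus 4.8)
The plan is to deduce \eqref{e:PL-commonproj} from the geometric linear refinement of Theorem~\ref{t:BMproye} by passing through the superlevel sets of the functions involved, via the layer-cake representation of the integral. Write $S=(1-\lambda) f\star\lambda g$ and, for $\tau>0$, put
\[
A_\tau=\{x\in\R^n:f(x)>\tau\},\quad B_\tau=\{x\in\R^n:g(x)>\tau\},\quad C_\tau=\{x\in\R^n:S(x)>\tau\}.
\]
Since $f$, $g$ and (by hypothesis) $S$ are measurable, these three sets are measurable, and Cavalieri's formula gives
\[
\int_{\R^n}S\,\dlat x=\int_0^\infty\vol(C_\tau)\,\dlat\tau,\qquad \int_{\R^n}f\,\dlat x=\int_0^\infty\vol(A_\tau)\,\dlat\tau,
\]
together with the analogous identity for $g$. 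Hence it is enough to establish, for each fixed $\tau>0$, the single-parameter inequality $\vol(C_\tau)\ge(1-\lambda)\vol(A_\tau)+\lambda\vol(B_\tau)$, and then integrate it over $\tau\in(0,\infty)$.

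Two observations reduce this to Theorem~\ref{t:BMproye}. The first is the inclusion $(1-\lambda)A_\tau+\lambda B_\tau\subseteq C_\tau$: if $f(x_1)>\tau$ and $g(x_2)>\tau$ then $f(x_1)^{1-\lambda}g(x_2)^{\lambda}>\tau^{1-\lambda}\tau^{\lambda}=\tau$, whence $S\bigl((1-\lambda)x_1+\lambda x_2\bigr)>\tau$ by the very definition of the Asplund sum. The second, which is where the hypothesis enters, is that $A_\tau$ and $B_\tau$ inherit a common projection onto $H$. Indeed, a point $h\in H$ lies in the orthogonal projection $A_\tau|H$ exactly when $f(h+\alpha\nu)>\tau$ for some $\alpha$, i.e. when $\proj_H(f)(h)=\sup_{\alpha}f(h+\alpha\nu)>\tau$; therefore
\[
A_\tau|H=\{h\in H:\proj_H(f)(h)>\tau\}=\{h\in H:\proj_H(g)(h)>\tau\}=B_\tau|H,
\]
the middle equality being precisely the assumption $\proj_H(f)=\proj_H(g)$.

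With these two facts in hand I would apply the extension of Theorem~\ref{t:BMproye} to measurable sets to the pair $A_\tau$, $B_\tau$, obtaining
\[
\vol\bigl((1-\lambda)A_\tau+\lambda B_\tau\bigr)\ge(1-\lambda)\vol(A_\tau)+\lambda\vol(B_\tau).
\]
Combining this with the inclusion of the previous step and the monotonicity of (outer) measure yields $\vol(C_\tau)\ge(1-\lambda)\vol(A_\tau)+\lambda\vol(B_\tau)$ for every $\tau>0$, and integrating against $\dlat\tau$ gives \eqref{e:PL-commonproj} through the Cavalieri identities above.

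The main obstacle is measure-theoretic. The Minkowski combination $(1-\lambda)A_\tau+\lambda B_\tau$ need not be measurable, and $A_\tau$, $B_\tau$ are arbitrary measurable sets rather than convex bodies, so one needs the common-projection Brunn--Minkowski inequality in its fullest (measurable-set) generality. The cleanest way to sidestep both points, and the route I would actually carry out, is to prove the $\tau$-wise inequality directly on the measurable set $C_\tau$ by Fubini's theorem. Choosing coordinates so that $H=\{x_n=0\}$ and $\nu=e_n$, and writing $x=(z,t)$ with $z\in H$, the equal-projection property forces the fibres $(A_\tau)_z$ and $(B_\tau)_z$ to be nonempty for the same set of $z$; for such $z$ the inclusion of the second step shows that the fibre $(C_\tau)_z$ contains $(1-\lambda)(A_\tau)_z+\lambda(B_\tau)_z$, whose one-dimensional measure is at least $(1-\lambda)\vol_1\bigl((A_\tau)_z\bigr)+\lambda\vol_1\bigl((B_\tau)_z\bigr)$ by the one-dimensional Brunn--Minkowski inequality. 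Integrating in $z$ via Fubini --- now legitimate because $C_\tau$ is measurable --- recovers $\vol(C_\tau)\ge(1-\lambda)\vol(A_\tau)+\lambda\vol(B_\tau)$. It is exactly here, in the licence to take $y_1=y_2=z$ inside the supremum defining the Asplund sum, that the common projection of $f$ and $g$ is used.
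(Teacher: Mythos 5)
Your proposal is correct and, in the route you actually carry out, it is essentially the paper's own argument: both proofs reduce to the one-dimensional Brunn--Minkowski inequality applied fiber-by-fiber over $H$, using the hypothesis $\proj_H(f)=\proj_H(g)$ to guarantee that the fibers of the superlevel sets of $f$ and $g$ are nonempty simultaneously (so that one may take both base points equal to $z$ in the supremum defining the Asplund sum), and then conclude by Fubini/Cavalieri. The only difference is bookkeeping: you perform the layer-cake decomposition in $\tau$ before slicing over $H$, whereas the paper slices over $H$ first and integrates over levels within each fiber (which is why it additionally needs the observation $\proj_H\bigl((1-\lambda)f\star\lambda g\bigr)\geq\proj_H(f)$, a step your ordering renders unnecessary).
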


Notice that, by means of the Arithmetic-Geometric mean inequality, the Pr\'e\-kopa-Leindler inequality (Theorem \ref{t:PrekopaLeindler}) directly follows from the above result
(and hence, indeed, \eqref{e:PL-commonproj} is a stronger inequality under the common projection assumption). As we already remarked, the Pr\'ekopa-Leindler inequality
is naturally connected to log-concave functions, i.e., functions of the form $e^{-u}$ where $u\,:\,\R^n\longrightarrow\R\cup\{\infty\}$ is convex.
As a consequence of the above result we have the following statement.

\begin{corollary} Let $f,g:\R^n\longrightarrow\R_{\geq0}$ be log-concave functions and let $\lambda\in(0,1)$.
\noindent If there exists
$H\in\L^n_{n-1}$ such that
\begin{equation*}
  \proj_H(f)=\proj_H(g)
\end{equation*}
then
\begin{equation*}
   \int_{\R^n}(1-\lambda) f\star\lambda g\,\dlat x \geq (1-\lambda)\int_{\R^n}f\,\dlat x +\lambda\int_{\R^n}g\,\dlat x.
\end{equation*}
\end{corollary}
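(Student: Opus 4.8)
The plan is to obtain this statement as an immediate consequence of Theorem \ref{t:PL-commonproj}, whose only non-geometric hypothesis is the measurability of $f$, $g$ and of their Asplund sum. So the entire task reduces to checking that these measurability requirements are automatic for log-concave functions. First I would recall that a log-concave function $f=e^{-u}$, with $u:\R^n\longrightarrow\R\cup\{\infty\}$ convex, is measurable: the function $u$ is continuous on the interior of its (convex) effective domain, whose boundary is Lebesgue-negligible, so $f$ is measurable. Thus both $f$ and $g$ already satisfy the standing assumptions, and it only remains to treat $(1-\lambda) f\star\lambda g$.

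The key step is to show that the Asplund sum of two log-concave functions is again log-concave, and hence measurable by the previous remark. Writing $f=e^{-u}$ and $g=e^{-v}$ with $u,v$ convex, the definition of the Asplund sum yields directly
\begin{equation*}
(1-\lambda) f\star\lambda g\,(x)=\exp\left(-\inf_{(1-\lambda)x_1+\lambda x_2=x}\bigl[(1-\lambda)u(x_1)+\lambda v(x_2)\bigr]\right).
\end{equation*}
Setting
\begin{equation*}
w(x)=\inf_{(1-\lambda)x_1+\lambda x_2=x}\bigl[(1-\lambda)u(x_1)+\lambda v(x_2)\bigr],
\end{equation*}
one recognizes $w$ as a weighted infimal convolution of the convex functions $u$ and $v$. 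After the change of variables $y_1=(1-\lambda)x_1$, $y_2=\lambda x_2$, this becomes an ordinary infimal convolution of the convex functions $y\mapsto(1-\lambda)u\bigl(y/(1-\lambda)\bigr)$ and $y\mapsto\lambda v(y/\lambda)$, which is convex. Therefore $(1-\lambda) f\star\lambda g=e^{-w}$ is log-concave, as claimed.

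Once the three functions $f$, $g$ and $(1-\lambda) f\star\lambda g$ are known to be measurable, the hypotheses of Theorem \ref{t:PL-commonproj} are fulfilled under the common projection assumption $\proj_H(f)=\proj_H(g)$, and the asserted inequality follows at once. Since the result is a corollary, there is no genuine obstacle here; the only point meriting a line of justification is the convexity of the infimal convolution $w$, which is a standard fact obtained by combining near-optimal decompositions of two base points into an admissible decomposition of a point on the segment joining them.
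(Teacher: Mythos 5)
Your proposal is correct and takes essentially the same route as the paper: the paper's proof is precisely the observation that the Asplund sum preserves log-concavity (via the identity with the infimal convolution of the convex exponents), hence is measurable because log-concave functions are continuous on the interior of their positivity set, after which Theorem \ref{t:PL-commonproj} applies. Your write-up merely spells out the convexity of the infimal convolution in more detail than the paper does.
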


\begin{proof}
The Asplund sum
preserves log-concavity, as it easily follows with the so-called infimal convolution (see Section \ref{s:main}), and hence measurability (a log-concave function $\phi$ defined in $\R^n$ is, in fact, continuous in the interior of $\{x\in\R^n:\,\phi(x)>0\}$).
\end{proof}

We prove that \eqref{e:PL-commonproj} can be obtained under the less restrictive hypothesis that the integral of the projections coincide,
establishing a functional version of Theorem \ref{t:BMproye_vol}. In the general case of measurable $f$ and $g$ (see Theorem \ref{t:PLcommon_int_pro}
in Section 3), this result requires two mild (but technical) measurability assumptions. For simplicity, here we present this result for log-concave functions decaying to zero at infinity.

\begin{teor}\label{t:PLcommon_int_pro_log-concave}
Let $f,g:\R^n\longrightarrow\R_{\ge0}$ be log-concave functions
such that
$$
\lim_{|x|\to\infty}f(x)=\lim_{|x|\to\infty}g(x)=0,
$$
and let $\lambda\in(0,1)$. If there exists $H\in\L^n_{n-1}$ such that
\begin{equation*}\label{e:hyp_intprojcom}
 \int_H \proj_H(f)(x)\,\dlat x=\int_H \proj_H(g)(x)\,\dlat x<\infty
\end{equation*}
then
\begin{equation*}
   \int_{\R^n}(1-\lambda) f\star\lambda g\,\dlat x \geq(1-\lambda)\int_{\R^n}f\,\dlat x +\lambda \int_{\R^n}g\,\dlat x.
\end{equation*}
\end{teor}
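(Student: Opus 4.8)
The plan is to pass to super-level sets and reduce everything to the geometric linear Brunn--Minkowski inequality for bodies with equal projection volume (Theorem \ref{t:BMproye_vol}), which I treat as a black box. Since $f,g$ are log-concave and vanish at infinity, write $f=e^{-\varphi}$ and $g=e^{-\psi}$ with $\varphi,\psi:\R^n\to(-\infty,+\infty]$ convex; then $(1-\lambda) f\star\lambda g=e^{-\theta}$, where $\theta$ is the $\lambda$-weighted infimal convolution of $\varphi$ and $\psi$. After rotating coordinates put $H=e_n^\perp$ and write $x=(y,s)$ with $y\in H$, $s\in\R$. The sublevel sets $C_r=\{\varphi\le r\}$, $D_r=\{\psi\le r\}$ and $E_v=\{\theta\le v\}$ are convex bodies (nonempty and compact for the relevant range of the level, by the decay hypothesis), they are nested increasing in the level, and the definition of the infimal convolution gives the inclusion $E_v\supseteq (1-\lambda)C_r+\lambda D_{r'}$ whenever $(1-\lambda)r+\lambda r'\le v$. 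By the layer-cake formula $\int_{\R^n}f\,\dlat x=\int_\R \vol(C_r)\,e^{-r}\,\dlat r$, and likewise for $g$ and for the Asplund sum with the bodies $E_v$.

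First I would record what the hypothesis says in this language. Since $\proj_H(f)=e^{-\varphi_H}$ with $\varphi_H(y)=\inf_s\varphi(y,s)$ convex, one has $C_r\,|\,H=\{\varphi_H\le r\}=\{\proj_H(f)\ge e^{-r}\}$, so
\[
\int_H\proj_H(f)\,\dlat y=\int_\R \vol_{n-1}(C_r|H)\,e^{-r}\,\dlat r,
\]
and the assumption becomes $\int_\R \vol_{n-1}(C_r|H)\,e^{-r}\,\dlat r=\int_\R \vol_{n-1}(D_r|H)\,e^{-r}\,\dlat r$. Next I would exploit the freedom in the inclusion above: instead of pairing the level $r$ of $\varphi$ with the \emph{same} level of $\psi$, I pair it with the level $\rho(r)$ chosen so that the two projections have equal $(n-1)$-volume, i.e. $\vol_{n-1}(C_r|H)=\vol_{n-1}(D_{\rho(r)}|H)$. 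Since both functions of the level are monotone, $\rho$ is an increasing reparametrization, and for each $r$ the pair $(C_r,D_{\rho(r)})$ satisfies exactly the hypothesis of Theorem \ref{t:BMproye_vol}. Writing $\mu(r)=(1-\lambda)r+\lambda\rho(r)$ for the level of the combination, the inclusion gives $E_{\mu(r)}\supseteq (1-\lambda)C_r+\lambda D_{\rho(r)}$, and Theorem \ref{t:BMproye_vol} applied to this pair yields $\vol\bigl((1-\lambda)C_r+\lambda D_{\rho(r)}\bigr)\ge (1-\lambda)\vol(C_r)+\lambda\vol(D_{\rho(r)})$. Feeding this into the layer-cake formula for the Asplund sum and changing variables $v=\mu(r)$ (legitimate because $\mu$ is increasing and maps onto the range where $E_v\neq\emptyset$) produces the lower bound
\[
\int_{\R^n}(1-\lambda) f\star\lambda g\,\dlat x\ \ge\ \int_\R\bigl[(1-\lambda)\vol(C_r)+\lambda\vol(D_{\rho(r)})\bigr]\,e^{-\mu(r)}\mu'(r)\,\dlat r.
\]

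The main obstacle is the final bookkeeping: I must show that this last integral is at least $(1-\lambda)\int f+\lambda\int g=(1-\lambda)\int_\R\vol(C_r)e^{-r}\dlat r+\lambda\int_\R\vol(D_v)e^{-v}\dlat v$. Because $e^{-\mu(r)}=\bigl(e^{-r}\bigr)^{1-\lambda}\bigl(e^{-\rho(r)}\bigr)^{\lambda}$ and $\mu'=(1-\lambda)+\lambda\rho'$, weighted AM-GM controls the new density against $e^{-r}$ and $e^{-\rho}\rho'$; however, the two resulting error terms have opposite signs at a given level, so they cannot be estimated separately and must be combined using the global identity $\int \vol_{n-1}(C_r|H)e^{-r}\dlat r=\int \vol_{n-1}(D_{\rho(r)}|H)e^{-\rho}\rho'\dlat r$. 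I expect this to be the crux: an integration by parts, whose boundary terms vanish precisely because $f$ and $g$ decay at infinity, rewrites the two error terms as integrals against the increasing densities $(\vol C_r)'$ and $(\vol D_{\rho(r)})'$, after which the equal-projection-integral identity together with the monotonicity of the level sets should force nonnegativity. This is also the step where, for merely measurable $f$ and $g$, the two technical measurability hypotheses of the general statement are needed, to guarantee that $\rho$ is a well-defined measure-preserving reparametrization and that the layer-cake manipulations are valid; for log-concave $f,g$ these are automatic.
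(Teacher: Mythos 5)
Your reduction breaks down at its very first step: the increasing reparametrization $\rho$ with $\vol_{n-1}(C_r|H)=\vol_{n-1}(D_{\rho(r)}|H)$ need not exist. Monotonicity of $r\mapsto\vol_{n-1}(C_r|H)$ and $s\mapsto\vol_{n-1}(D_s|H)$ makes each of them increasing, but a level-matching requires their \emph{ranges} to coincide, and the hypothesis --- equality of the single scalar $\int_H \proj_H(f)\,\dlat x=\int_H \proj_H(g)\,\dlat x$ --- does not force that. Concretely, in $\R^2$ with $H$ the first coordinate axis, take
\[
f(x,y)=\chi_{[0,1]}(x)\,e^{-|y|},\qquad g(x,y)=\tfrac12\,\chi_{[0,2]}(x)\,e^{-|y|}.
\]
Both are log-concave and decay at infinity, and $\int_H \proj_H(f)\,\dlat x=\int_H \proj_H(g)\,\dlat x=1$. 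However every nonempty level set $C_r=[0,1]\times[-r,r]$ has projection of length $1$, while every nonempty level set $D_s=[0,2]\times[-(s-\log 2),\,s-\log 2]$ has projection of length $2$: no pairing of levels can equalize the projection volumes, so Theorem \ref{t:BMproye_vol} can never be invoked. (The conclusion of the theorem does hold here: the Asplund sum equals $2^{-\lambda}\chi_{[0,1+\lambda]}(x)e^{-|y|}$ and the inequality reduces to $1+\lambda\geq 2^{\lambda}$.) Moreover, even in situations where some matching $\rho$ exists, your ``crux'' step is a hope rather than an argument: you concede that pointwise AM--GM goes the wrong way, and in addition, when the two ranges overlap only partially, $\rho$ is bounded and the substitution $s=\rho(r)$ does not sweep out all levels of $g$, so $\lambda\int\vol\bigl(D_{\rho(r)}\bigr)e^{-\rho(r)}\rho'(r)\,\dlat r$ falls strictly short of $\lambda\int_{\R^n}g\,\dlat x$; the promised integration by parts that would repair both defects is exactly what is missing.

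For contrast, the paper's proof avoids level-matching altogether. It applies two symmetrizations: first $\Stsym{H}(\cdot)$, the Steiner symmetrization of the hypograph in the direction $H^{\perp}$, which preserves the integral and the projection onto $H$ (Proposition \ref{p:propStsym}); then the Schwarz-type $\Stsym{H^\perp}(\cdot)$, which replaces each slice $f_{|_{\left(\alpha\nu+H\right)}}$ by a flat bump on a fixed $(n-1)$-ball with the same integral. After both, the projection onto $H$ of the symmetrized $f$ is the function $\bigl(\int_H \proj_H(f)\,\dlat x\bigr)\chi_{_{B_{n-1}}}$, and likewise for $g$; thus the equal-integral hypothesis is converted into an \emph{exact} equal-projection hypothesis and Theorem \ref{t:PL-commonproj} applies. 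Propositions \ref{p:desigsim_1} and \ref{p:desigsim_2} (the symmetral of an Asplund sum dominates the Asplund sum of the symmetrals) transfer the resulting inequality back to $f$ and $g$, and the log-concavity and decay assumptions enter only to guarantee the measurability of the symmetrized Asplund sums (Theorem \ref{t:PLcommon_int_pro}). If you want a proof along your lines, you would need an idea playing the role of this double symmetrization, since level-by-level application of Theorem \ref{t:BMproye_vol} is structurally unavailable.
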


\medskip

In the special case $n=1$, Theorem \ref{t:PL-commonproj} reduces to the following fact: if $f$ and $g$ are non-negative measurable functions
defined on $\R$ such that $(1-\lambda) f\star\lambda g$ is measurable and
$$
\sup_\R f=\sup_\R g,
$$
then \eqref{e:PL-commonproj} holds. This result can be found in \cite[Theorem 3.1]{BL}.

The Pr\'ekopa-Leindler inequality has been generalized by introducing $p$th me\-ans (see Section \ref{s:Extension to Borell-Brascamp-Lieb inequalities} for detailed definitions and explanations) on both sides of \eqref{e:PrekopaLeindler};
the resulting inequalities came to be called Borell-Brascamp-Lieb inequalities due to \cite{Borell} and \cite{BL}.
We have been able to extend our approach to these inequalities by obtaining the suitable versions of Theorems \ref{t:PL-commonproj} and \ref{t:PLcommon_int_pro_log-concave}. Again, for simplicity (and in order to avoid technical
measurability assumptions), we present here the result for the
case of $p$-concave functions (see Section \ref{s:Extension to Borell-Brascamp-Lieb inequalities} for the definition).

\begin{teor}\label{t:BBLcommon_int_pro_p-concave}
Let $f,g:\R^n\longrightarrow\R_{\geq0}\cup\{\infty\}$ be $p$-concave functions, where
$-1/n\leq p\leq\infty$, and let $\lambda\in(0,1)$. If there exists $H\in\L^n_{n-1}$ such that
\begin{equation*}
 \int_H \proj_H(f)(x)\,\dlat x=\int_H \proj_H(g)(x)\,\dlat x
\end{equation*}
then
\begin{equation*}
\int_{\R^n}(1-\lambda) f\star_p\lambda g\,\dlat x \geq(1-\lambda)\int_{\R^n}f\,\dlat x +\lambda\int_{\R^n}g\,\dlat x.
\end{equation*}
\end{teor}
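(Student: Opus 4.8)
The plan is to mirror the strategy that must underlie Theorem~\ref{t:PL-commonproj} and Theorem~\ref{t:PLcommon_int_pro_log-concave}, and to exploit the fact that the $p$-concave case is formally very close to the log-concave case (which corresponds to $p=0$). First I would recall the definition of the $p$-mean $M_p^{\lambda}(a,b)$ and of the operation $\star_p$: for $p$-concave functions $f=\phi^{1/p}$ (with $\phi$ concave on its support, when $p>0$) the sum $(1-\lambda)f\star_p\lambda g$ is again $p$-concave, so in particular it is measurable and the Borell--Brascamp--Lieb inequality applies to it. This is the analogue of the observation in the Corollary above that $\star$ preserves log-concavity, and it removes the two technical measurability hypotheses, which is exactly why the statement is phrased for $p$-concave functions. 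I would record this reduction explicitly as the first step.

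Second, I would set up the one-dimensional slicing in the direction $\nu$ normal to $H$. Writing points of $\R^n$ as $x=h+t\nu$ with $h\in H$, define for each $h$ the one-dimensional marginals $F(h)=\int_\R f(h+t\nu)\,\dlat t$ and $G(h)=\int_\R g(h+t\nu)\,\dlat t$, so that Fubini gives $\int_{\R^n}f=\int_H F$ and similarly for $g$. The key geometric point is that $\proj_H(f)(h)=\sup_{t} f(h+t\nu)$, so the hypothesis $\int_H\proj_H(f)=\int_H\proj_H(g)$ is a statement about the sup-marginals, not the integral-marginals. I would then prove the crucial pointwise estimate: for the slice of $(1-\lambda)f\star_p\lambda g$ over the point $(1-\lambda)h_1+\lambda h_2\in H$, the one-dimensional Borell--Brascamp--Lieb inequality (the $n=1$ case) yields
\begin{equation*}
\int_\R (1-\lambda)f\star_p\lambda g\,\bigl((1-\lambda)h_1+\lambda h_2+t\nu\bigr)\,\dlat t
\geq M_{p/(1+\ldots)}^{\lambda}\!\bigl(F(h_1),G(h_2)\bigr),
\end{equation*}
with the exponent shifted in the standard Borell--Brascamp--Lieb manner; I would compute this shift carefully since it is where errors creep in.

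The heart of the argument, and the main obstacle, is to convert the common--integral--of--projections hypothesis into linearity. Following the scheme of Theorem~\ref{t:PLcommon_int_pro_log-concave}, I would reparametrize: after normalizing the two sup-profiles to have equal integral over $H$, one transports the $H$-variable so that corresponding level sets are matched, reducing the $H$-integration to a situation where the common-projection hypothesis of Theorem~\ref{t:PL-commonproj} effectively holds fiberwise. Concretely, I expect to build a measure-preserving map on $H$ (a transport/rearrangement in the spirit of the Knothe map) that identifies the sup-marginal of $f$ with that of $g$, and then to apply the equal-projection version, Theorem~\ref{t:PL-commonproj} in dimension one along each transported fiber, to gain the \emph{linear} lower bound $(1-\lambda)F(h_1)+\lambda G(h_2)$ rather than the merely geometric $M_p^\lambda$. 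Integrating this linear fiberwise bound over $H$ against the transport and invoking Fubini once more then produces $(1-\lambda)\int_{\R^n}f+\lambda\int_{\R^n}g$, as desired.

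The delicate points I would watch are the following: first, that the $p$-concavity is preserved under restriction to the lines $h+\R\nu$ and under the reparametrization, so that the one-dimensional inequality is legitimately available on each fiber; second, the correct bookkeeping of the exponent $p$ through the dimensional induction, especially at the endpoints $p=-1/n$ and $p=\infty$, where the $p$-mean degenerates and must be interpreted as a limit (the $\infty$ case being the maximum and $-1/n$ the extreme allowed by concavity); and third, the measurability and finiteness of the transported fibers, which is automatic here precisely because $p$-concavity forces continuity on the interior of the support. I expect the transport construction matching the sup-profiles to be the genuinely hard step, with the exponent arithmetic being merely tedious; once linearity is secured on each fiber, the final integration over $H$ is routine.
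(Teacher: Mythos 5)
Your fiberwise ingredients are correct: for fixed $h_1,h_2\in H$ the fiber of $(1-\lambda)f\star_p\lambda g$ over $(1-\lambda)h_1+\lambda h_2$ dominates the one-dimensional $p$-sum of the fibers of $f$ over $h_1$ and of $g$ over $h_2$, so the $n=1$ Borell--Brascamp--Lieb inequality gives the $M_{p/(p+1)}$ bound, and the one-dimensional equal-supremum theorem gives the linear bound \emph{when the two fibers have equal suprema}. The gap is exactly at the step you call the hard one: no map $T$ on $H$ can do what you ask of it. A map pushing the measure $\proj_H(f)\,\dlat h$ forward to $\proj_H(g)\,\dlat h$ (Knothe or otherwise) satisfies $\proj_H(f)(h)=\proj_H(g)(T(h))\det DT(h)$, \emph{not} $\proj_H(f)(h)=\proj_H(g)(T(h))$; and a pointwise sup-matching map need not exist at all, since equality of integrals is far weaker than equimeasurability of the sup-marginals. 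Concretely, take $n=2$, $H$ the first coordinate axis, $f=\chi_{[0,1]^2}$ and $g=\tfrac12\chi_{[0,2]\times[0,1]}$: both are $\infty$-concave (hence $p$-concave for every $p$) and $\int_H\proj_H(f)\,\dlat x=\int_H\proj_H(g)\,\dlat x=1$, yet $\proj_H(f)$ takes only the value $1$ and $\proj_H(g)$ only the value $\tfrac12$ on their supports, so \emph{no} pairing of fibers ever satisfies the equal-sup hypothesis; on every pair of fibers only the $p$-mean bound is available, and integrating it against any transport merely reproduces the ordinary BBL inequality, not the linear refinement. Conversely, if you drop measure preservation and force sup-matching (when it exists), the integration breaks: with $S=(1-\lambda)\mathrm{id}+\lambda T$ you would need roughly $\bigl[(1-\lambda)F+\lambda\, G\circ T\bigr]\det DS\geq(1-\lambda)F+\lambda\,(G\circ T)\det DT$ pointwise, and triangularity of $DT$ only yields $\det DS\geq(1-\lambda)^{n-1}+\lambda^{n-1}\det DT$, with exponents $n-1$ where you need exponent $1$.

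The mechanism the paper uses instead, and which your outline lacks, is a second \emph{symmetrization}, not a transport, and it slices parallel to $H$ rather than perpendicular to it. One first Steiner-symmetrizes with respect to $H$, so that $\proj_H(f)$ becomes the restriction to $H$ and the slice integrals $d_\alpha=\int_{\alpha\nu+H}\Stsym{H}(f)\,\dlat x$ peak at $\alpha=0$ with $d_0=\int_H\proj_H(f)\,\dlat x$; one then applies the Schwarz-type symmetrization $\Stsym{H^\perp}$, replacing each slice $\alpha\nu+H$ by $d_\alpha\chi_{B_{n-1}}$. This preserves integrals, super-commutes with $\star_p$ up to the exponent shift $q=p/(np+1)$ (and then one simply passes to $\star_{-\infty}$, Proposition \ref{p:BBL-inclu-sym}), and converts the equal-integral hypothesis into a genuine equal-projection statement for the doubly symmetrized functions --- in the example above, both functions become literally identical after the two symmetrizations. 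One then applies the equal-projection theorem (Theorem \ref{t:BBL-commonproj}) in dimension $n$; this is Theorem \ref{t:BBLcommon_int_pro}. The statement you were asked to prove is then almost a corollary: reduce to $p<0$ by monotonicity of $\star_p$ in $p$, and note that both symmetrizations preserve the relevant concavity, which supplies the measurability hypotheses of Theorem \ref{t:BBLcommon_int_pro}. Your observation that $p$-concavity removes measurability issues is right and is indeed the paper's reason for this formulation, but the heart of the matter --- turning ``equal integrals of projections'' into ``equal projections'' --- cannot be done by rearrangement on $H$; it requires spreading each parallel slice uniformly over a fixed $(n-1)$-dimensional ball.
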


\medskip

The paper is organized as follows. Section \ref{s:background} is devoted to collecting some definitions and preliminary constructions, whereas Theorems \ref{t:PL-commonproj} and \ref{t:PLcommon_int_pro_log-concave} (in fact, a more general version of the latter) will be proven in Section \ref{s:main}, as well as other related results. Finally in Section \ref{s:Extension to Borell-Brascamp-Lieb inequalities} we deal with the Borell-Brascamp-Lieb extensions, proving (among other results) Theorem \ref{t:BBLcommon_int_pro_p-concave}.

\section{Background material and auxiliary results}\label{s:background}

We work in the $n$-dimensional Euclidean space $\R^n$, $n\ge 1$, and $\K^n$ denotes the set of all convex bodies in $\R^n$. Given a subset $A$ of $\R^n$, $\chi_A$ is
the characteristic function of $A$.

With $\L^n_k$, $k\in\{0,1,2,\dots,n\}$, we will represent the set of
all $k$-dimensional linear subspaces of $\R^n$. For $H\in\L^n_k$, $H^{\bot}\in\L^n_{n-k}$ denotes
the orthogonal complement of $H$. Given $A\subset\R^n$ and $H\in\L^n_k$, the orthogonal
projection of $A$ onto $H$ is denoted by $A|H$.  For $k\in\{0,1,\dots,n\}$ and $A\subset\R^n$,
$\vol_k(A)$ denotes the $k$-dimensional Lebesgue measure of $A$ (assuming that $A$ is measurable
with respect to this measure). We will often omit the index $k$ when it is equal to the dimension $n$ of the
ambient space; in this case $\vol(\cdot)=\vol_n(\cdot)$ is just the ($n$-dimensional) Lebesgue measure.

Let $f:\R^n\longrightarrow\R$; we define the {\em strict epigraph} of $f$ by
\begin{equation*}
\stepi(f)=\{(x,t):\, x\in\R^n, \, t\in\R,\, t>f(x)\}\subset\R^{n+1},
\end{equation*}
while its {\it strict hypograph} (or subgraph) will be denoted as
\begin{equation*}
\sthyp(f)=\{(x,t):\, x\in\R^n, \, t\in\R,\, f(x)>t\}\subset\R^{n+1}.
\end{equation*}
The same definitions are valid for functions that take values on $\R\cup\{\infty\}$ (or $\R\cup\{\pm\infty\}$). In this case the strict epigraph of a function $f$ is empty if and only if $f$ is identically equal to infinity. Since we will work with non-negative functions, we also define
$$
\sthyp^+(f)=\{(x,t):\, x\in\R^n, \, t\in\R_{\geq0},\, f(x)>t\}\subset\R^{n+1}.
$$

Throughout this paper, given $H\in\L^n_{n-1}$, we set $\widetilde{H}=\nolinebreak H\times\nolinebreak\R$; i.e.,
$\widetilde{H}$ is the $n$-dimensional subspace (in $\R^{n+1}$) \emph{associated to} $H$ when working with epigraphs and hypographs of functions.

\medskip

The proof of our main result is based on symmetrization procedures; in fact we will use two distinct types of symmetrization of functions that will be introduced in the rest of this section.

\subsection{The Steiner symmetrization of a function}

To begin with, we briefly recall the Steiner symmetrization of sets:
given a nonempty measurable set $A\subset\R^n$ and $H\in\L^n_{n-1}$, the \emph{Steiner symmetral} of $A$ with respect to
$H$ is given by
\begin{equation*}
\Stsym{H}(A)=\left\{h+l\in\R^n: \, h\in A|H, \, l\in H^\perp, \, \enorm{l}\leq\frac{1}{2}\vol_1\bigl(A\cap (h+H^\perp)\bigr)\right\}
\end{equation*}
(see e.g. \cite[p.~169]{Gr}, or \cite[Section 9]{BF} for the compact convex case).
Notice that $\Stsym{H}(A)$ is well defined since the sections of a measurable set are also measurable
(for $\vol_{n-1}$ a.e. $h\in A|H$), and it is measurable (see for instance \cite[p. 67]{EG}). To complete the picture, we define $\Stsym{H}(\emptyset)=\emptyset$.

Next we define the first type of symmetrization of a (non-negative measurable) function $f$ with respect to a hyperplane $H$; roughly speaking
this is simply obtained by the Steiner symmetrization of the hypograph of $f$ with respect to $H$.  This technique is very well known in the theory of
partial differential equations and calculus of variations; see for instance \cite{Kawohl}.

Given a measurable $f:\R^n\longrightarrow\R_{\geq0}\cup\{\infty\}$, it will be convenient to write it in the form $f=e^{-u}$ where
$u:\R^n\longrightarrow\R\cup\{\pm\infty\}$ is a measurable function (i.e., $u(x)=-\log f(x)$ with the
conventions that $\log 0=-\infty$ and $\log \infty=\infty$). First we will consider the ``symmetral'' $u_H$ of $u$ with respect to $H\in\L^n_{n-1}$, which is given by
\begin{equation}\label{e:defu_H}
\stepi(u_H)=\Stsym{\widetilde{H}}\bigl(\stepi(u)\bigr).
\end{equation}

Notice that \eqref{e:defu_H} defines $u_H$ completely. Indeed, as $u$ is measurable its strict epigraph is also measurable and hence
$\Stsym{\widetilde{H}}\bigl(\stepi(u)\bigr)$ is well defined and measurable. Moreover it is easy to see that if a point $(x,\bar t)\in\Stsym{\widetilde{H}}\bigl(\stepi(u)\bigr)$,
with $x\in\R^n$ and $\bar t\in\R$, then the entire ``vertical'' half line $\{(x,t)\,:\,t\ge\bar t\}$ above it is contained in $\Stsym{\widetilde{H}}\bigl(\stepi(u)\bigr)$. Hence \eqref{e:defu_H} is equivalent to the following
explicit expression for $u_H:\R^n\longrightarrow\R\cup\{\pm\infty\}$:
\begin{equation*}\label{e:defu_H2}
u_H(x)=\inf\left\{r\in\R: \, (x,r)\in\Stsym{\widetilde{H}}\bigl(\stepi(u)\bigr)\right\}.
\end{equation*}
Note also that the measurability of its epigraph implies the measurability of $u_H$.
The next step is to define the symmetral of $f=e^{-u}$ through the symmetral of $u$, as follows.

\begin{definition}
Let $u:\R^n\longrightarrow\R\cup\{\pm\infty\}$ be a measurable function  and let $H\in\L^n_{n-1}$. Then the \emph{Steiner symmetral} of $f=e^{-u}$ is
$\Stsym{H}(f)= e^{-u_H}$, where $u_H$ is given by \eqref{e:defu_H}.
\end{definition}
As $u_H$ is measurable, $\Stsym{H}(f)$ is measurable as well.
Moreover, since $t\mapsto e^{-t}$ is a decreasing bijection between $\R\cup\{\pm\infty\}$ and $\R_{\geq0}\cup\{\infty\}$, we also have
\[\sthyp(\Stsym{H}(f))=\Stsym{\widetilde{H}}\bigl(\sthyp(f)\bigr)\] (which would have allowed us to define $\Stsym{H}(f)$ directly).
The above equality still holds if hypographs are replaced by positive hypographs or even if we consider sections
$(h+H^\perp)\times\R$, for any $h\in H$ (see Figure \ref{figurasimfunciones}), i.e., we replace $f$ by any of its restrictions
to a line perpendicular to $H$.

\medskip

\begin{figure}[h]
\begin{adjustwidth}{-20mm}{-20mm}
\hspace{0.7cm}
\includegraphics[width=5.25cm]{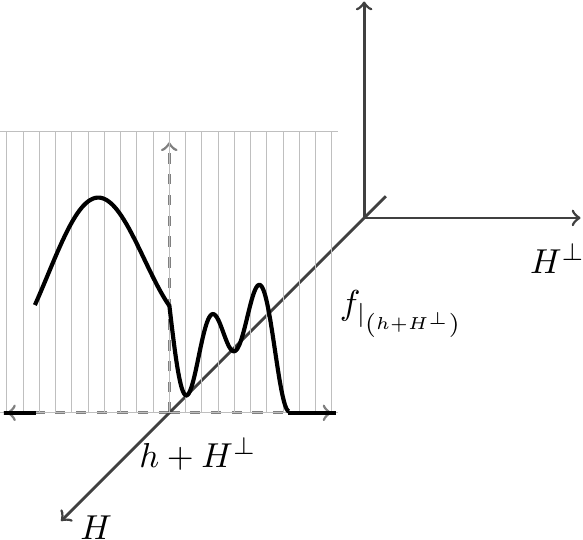}
\hspace{-1.7cm}
\includegraphics[width=5.75cm]{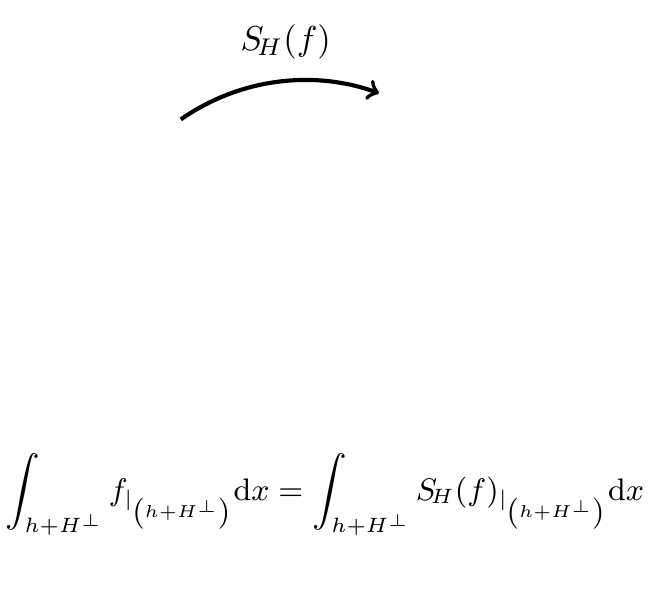}
\hspace{0cm}
\includegraphics[width=5.25cm]{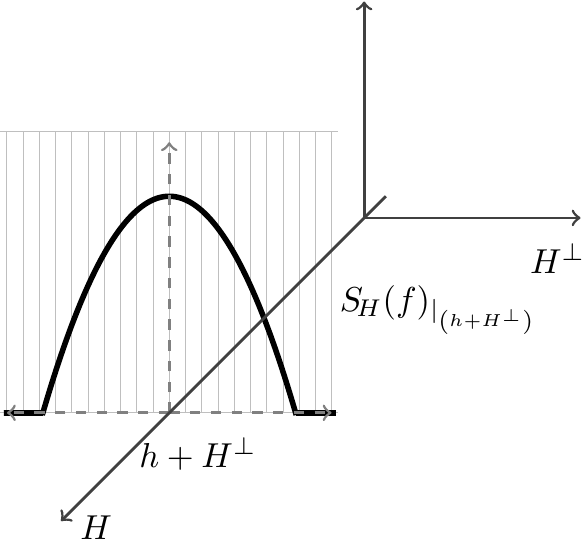}
\caption{}\label{figurasimfunciones}
\end{adjustwidth}
\end{figure}

\medskip

From the definition of $\Stsym{\widetilde{H}}(\cdot)$ we clearly have
\begin{equation*}
\stepi(u)|\widetilde{H}=\Stsym{\widetilde{H}}(\stepi(u))|\widetilde{H}=\Stsym{\widetilde{H}}(\stepi(u))\cap\widetilde{H},
\end{equation*}
which is equivalent to
\begin{equation*}
\sthyp(f)|\widetilde{H}=\sthyp\bigl(\Stsym{H}(f)\bigr)|\widetilde{H}=\sthyp(\Stsym{H}(f))\cap\widetilde{H}.
\end{equation*}
From that, it follows
\begin{equation*}\label{e:projhypo}
\proj_H(f)=\proj_H(\Stsym{H}(f))=\Stsym{H}(f)_{|_{H}}
\end{equation*}
where we have used the following notation: for a function $g$ defined in $\R^n$ and $H\in\L^n_{n-1}$, $g_{|_{H}}$ is the restriction of
$g$ to $H$.

On the other hand, by construction of $\Stsym{H}(\cdot)$, it is clear that (for any fixed $h\in H$)
\begin{equation*}
\vol_1\left(x\in h+H^\perp:\,f(x)> t\right)=\vol_1\left(x\in h+H^\perp:\,\Stsym{H}(f)(x)> t\right)
\end{equation*}
and hence
\begin{equation*}
\begin{split}
\int_{\R^n} f\,\dlat x&=\int_H\int_{0}^{\proj_H(f)(h)}\vol_1\left(x\in h+H^\perp:\,f(x)> t\right)\,\dlat t \,\dlat h\\
&=\int_H\int_{0}^{\proj_H(\Stsym{H}(f))(h)}\vol_1\left(x\in h+H^\perp:\,\Stsym{H}(f)(x)> t\right)\,\dlat t \,\dlat h\\
&=\int_{\R^n} \Stsym{H}(f)\,\dlat x.
\end{split}
\end{equation*}

Therefore, we have shown the following result:
\begin{prop}\label{p:propStsym}
Let $f:\R^n\longrightarrow\R_{\geq0}\cup\{\infty\}$ be a non-negative measurable function and let $H\in\L^n_{n-1}$ be a hyperplane.
Then
\begin{enumerate}
\item[(i)]$\Stsym{H}(f):\R^n\longrightarrow\R_{\geq0}\cup\{\infty\}$ is a non-negative measurable function.
\item[(ii)]
$$
\displaystyle\int_{\R^n} f\,\dlat x=\int_{\R^n} \Stsym{H}(f)\,\dlat x.
$$
\item[(iii)]$\proj_H(f)=\proj_H(\Stsym{H}(f))=\Stsym{H}(f)_{|_{H}}$.
\end{enumerate}
\end{prop}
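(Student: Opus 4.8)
The plan is to transfer the three standard properties of the Steiner symmetrization of \emph{sets} — preservation of measurability, of the one-dimensional measure of sections orthogonal to $H$, and of the projection onto $H$ — to the functional setting, reading each of them off the hypograph of $f$. Since $\Stsym{H}(f)$ is defined precisely so that $\sthyp(\Stsym{H}(f))=\Stsym{\widetilde H}(\sthyp(f))$, the three conclusions (i), (ii), (iii) will correspond exactly to these three properties applied to $\sthyp(f)$ (equivalently $\stepi(u)$).

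For (i) I would simply follow the chain of definitions. As $u=-\log f$ is measurable, its strict epigraph $\stepi(u)\subset\R^{n+1}$ is measurable; by the quoted fact that Steiner symmetrization sends measurable sets to measurable sets, $\Stsym{\widetilde H}(\stepi(u))$ is measurable, and since this set equals $\stepi(u_H)$ the function $u_H$ is measurable (its epigraph being measurable). Finally $\Stsym{H}(f)=e^{-u_H}$ is the composition of a measurable function with the continuous map $t\mapsto e^{-t}$, hence measurable; this is essentially the remark recorded right after the definition of $\Stsym{H}(f)$. For (iii) I would exploit the defining feature that the orthogonal projection of a set onto the symmetrizing hyperplane is unchanged by Steiner symmetrization. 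Applied to $\widetilde H$ and the hypograph, this yields $\sthyp(f)|\widetilde H=\sthyp(\Stsym{H}(f))|\widetilde H$, which is exactly $\proj_H(f)=\proj_H(\Stsym{H}(f))$. The remaining equality is immediate from the shape of the symmetral: along each line $h+H^\perp$ the superlevel set $\{\Stsym{H}(f)>t\}$ is a symmetric interval centred at $h\in H$, so the supremum of $\Stsym{H}(f)$ on that line is attained at $h$, i.e. $\proj_H(\Stsym{H}(f))=\Stsym{H}(f)_{|_{H}}$.

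For (ii) I would pass to the layer-cake (Cavalieri) representation and integrate fibre by fibre. The key fact is that, for every fixed $h\in H$ and every level $t>0$, symmetrization preserves the length of the section,
$$
\vol_1\bigl(\{x\in h+H^\perp:\,f(x)>t\}\bigr)=\vol_1\bigl(\{x\in h+H^\perp:\,\Stsym{H}(f)(x)>t\}\bigr),
$$
because it merely re-centres that section as an interval of the same one-dimensional measure. Writing $\int_{\R^n}f\,\dlat x$ as the iterated integral, over $h\in H$ and $t\in\bigl(0,\proj_H(f)(h)\bigr)$, of these section-lengths — and using (iii) to see that the upper limit of the inner integral is the same for $f$ and $\Stsym{H}(f)$ — the integrand is unchanged under symmetrization, so the two integrals coincide.

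The step I expect to require the most care is the measure-theoretic bookkeeping in (ii): one must check that the sections are measurable for $\vol_{n-1}$-almost every $h$ and invoke Tonelli's theorem (legitimate here since the integrands are non-negative) to represent the volume as this iterated integral, while keeping the extended values $f=\infty$ (i.e. $u=-\infty$) harmlessly absorbed into the hypograph description. These points are routine given the cited results on the symmetrization of measurable sets, but they are precisely where the hypotheses of the proposition are genuinely used.
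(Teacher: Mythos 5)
Your proposal is correct and follows essentially the same route as the paper's own argument: the chain of definitions through $\stepi(u)$ and its Steiner symmetral for measurability in (i), the projection-invariance of Steiner symmetrization read off the hypograph (equivalently, the symmetric-interval shape of the sections) for (iii), and preservation of the one-dimensional section lengths combined with the layer-cake/Fubini representation, with the projection as the upper limit of the inner integral, for (ii). No gaps to report.
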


It will be important to relate the Steiner symmetral of the Asplund sum of two non-negative functions with the
Asplund sum of their symmetrals. For this we will need the following inclusion
involving the symmetrals of the (nonempty) measurable sets $A, B$ and $(1-\lambda)A+\lambda B$ respectively. The proof can
be carried out following the ideas of the proof  for convex bodies
(see e.g. \cite[Proposition 9.1]{Gr}); we include it here for completeness.
\begin{prop}\label{p:InclStSymm}
Let $A,B\subset\R^n$ be nonempty measurable sets such that $(1-\lambda)A+\lambda B$ is measurable for a given $\lambda\in(0,1)$ and let $H\in\L^n_{n-1}$ be a hyperplane. Then we have
\begin{equation}\label{e:InclStSymm}
\Stsym{H}\bigl( (1-\lambda)A+\lambda B \bigr)\supset(1-\lambda)\Stsym{H}(A)+\lambda \Stsym{H}(B).
\end{equation}
\end{prop}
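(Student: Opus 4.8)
The plan is to reduce the set inclusion \eqref{e:InclStSymm} to a one-dimensional statement fiber by fiber over the hyperplane $H$. Recall that by definition of the Steiner symmetral, for each $h\in H$ the section $\Stsym{H}(A)\cap(h+H^\perp)$ is a symmetric segment in $h+H^\perp$ whose length equals $\vol_1\bigl(A\cap(h+H^\perp)\bigr)$, and analogously for $B$ and for $(1-\lambda)A+\lambda B$. So the symmetral replaces each vertical fiber by a centered segment of the same $\vol_1$-measure. The inclusion \eqref{e:InclStSymm} is a statement about which points lie in these fibers, and since all the symmetrized fibers are centered segments in the one-dimensional space $H^\perp$, I expect the whole inclusion to follow from a comparison of segment lengths together with the linearity of the center.

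First I would fix a point $h=(1-\lambda)h_1+\lambda h_2$ with $h_1\in A|H$ and $h_2\in B|H$, since every $h\in\bigl((1-\lambda)A+\lambda B\bigr)|H$ arises this way (the projection commutes with Minkowski combination, $\bigl((1-\lambda)A+\lambda B\bigr)|H=(1-\lambda)(A|H)+\lambda(B|H)$). Take a point in the right-hand side of \eqref{e:InclStSymm} lying over $h$: it has the form $(1-\lambda)(h_1+l_1)+\lambda(h_2+l_2)=h+\bigl((1-\lambda)l_1+\lambda l_2\bigr)$ with $l_i\in H^\perp$, $\enorm{l_i}\le\tfrac12\vol_1\bigl(A\cap(h_1+H^\perp)\bigr)$ (respectively for $B$). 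By the triangle inequality in the one-dimensional space $H^\perp$,
\[
\enorm{(1-\lambda)l_1+\lambda l_2}\le(1-\lambda)\enorm{l_1}+\lambda\enorm{l_2}\le\frac{1}{2}\Bigl[(1-\lambda)\vol_1\bigl(A\cap(h_1+H^\perp)\bigr)+\lambda\vol_1\bigl(B\cap(h_2+H^\perp)\bigr)\Bigr].
\]
To show this point lies in the left-hand side of \eqref{e:InclStSymm}, I need its fiber coordinate over $h$ to be bounded by $\tfrac12\vol_1\bigl(((1-\lambda)A+\lambda B)\cap(h+H^\perp)\bigr)$.

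Hence the crux is the one-dimensional Brunn-Minkowski inequality on the fibers: for the sections $A_{h_1}=A\cap(h_1+H^\perp)$ and $B_{h_2}=B\cap(h_2+H^\perp)$, viewed as subsets of the line $H^\perp$, one has the inclusion $(1-\lambda)A_{h_1}+\lambda B_{h_2}\subset\bigl((1-\lambda)A+\lambda B\bigr)\cap(h+H^\perp)$ (a direct consequence of the definition of Minkowski combination), and therefore
\[
\vol_1\bigl(((1-\lambda)A+\lambda B)\cap(h+H^\perp)\bigr)\ge\vol_1\bigl((1-\lambda)A_{h_1}+\lambda B_{h_2}\bigr)\ge(1-\lambda)\vol_1(A_{h_1})+\lambda\vol_1(B_{h_2}),
\]
the last step being the one-dimensional case of \eqref{BM1}. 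Combining this with the triangle-inequality bound above shows precisely that the candidate point satisfies the defining length condition for $\Stsym{H}\bigl((1-\lambda)A+\lambda B\bigr)$, completing the inclusion. The main obstacle I anticipate is purely measure-theoretic bookkeeping: the sections $A_{h_1}$, $B_{h_2}$ need to be measurable for $\vol_{n-1}$-almost every base point, and one must be careful that the displayed length inequality holds for the relevant $h$ rather than merely almost everywhere. Since the defining condition for membership in a Steiner symmetral is an inequality on $\vol_1$ of sections, I would argue that a minor subtlety arises when a section has infinite or zero measure, but these degenerate fibers contribute either trivially or via the convention that the corresponding symmetrized fiber is all of $h+H^\perp$ or empty; handling these edge cases cleanly, rather than the geometric heart of the argument, is where the care is required.
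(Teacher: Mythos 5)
Your proposal is correct and follows essentially the same argument as the paper: decompose a point of the right-hand side as $(1-\lambda)(h_1+l_1)+\lambda(h_2+l_2)$, bound $\enorm{(1-\lambda)l_1+\lambda l_2}$ by the triangle inequality, and then combine the one-dimensional Brunn--Minkowski inequality on the fibers with the inclusion $(1-\lambda)A_{h_1}+\lambda B_{h_2}\subset\bigl((1-\lambda)A+\lambda B\bigr)\cap(h+H^\perp)$. The only difference is the order in which you apply the fiber inclusion and the one-dimensional inequality, which is immaterial.
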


\begin{proof}
Let $x\in\Stsym{H}(A)$, $y\in\Stsym{H}(B)$ or, equivalently, $x=h_x+l_x, y=h_y+l_y$ where $h_x,h_y\in H$ and $l_x,l_y\in H^\perp$ are such that
$\enorm{l_x}\leq\frac{1}{2}\vol_1\left(A\cap\bigl(x+H^\perp\bigr)\right)$ and
$\enorm{l_y}\leq\frac{1}{2}\vol_1\left(B\cap\bigl(y+H^\perp\bigr)\right)$. Then $(1-\lambda)x+\lambda y=\bigl((1-\lambda)h_x+\lambda h_y\bigr) + \bigl((1-\lambda)l_x+
\lambda l_y\bigr)$, with $(1-\lambda)h_x+\lambda h_y \in H$ and $(1-\lambda)l_x+\lambda l_y\in H^\perp$. By means of the ($1$-dimensional) Brunn-Minkowski inequality,
we have
\begin{equation*}
\begin{split}
\enorm{(1-\lambda)l_x+\lambda l_y}&\leq(1-\lambda)\enorm{l_x}+\lambda \enorm{l_y}\\
&\leq\frac{1}{2}\left((1-\lambda)\vol_1\bigl(A\cap\bigl(x+H^\perp\bigr)\bigr)
+\lambda\vol_1\bigl(B\cap\bigl(y+H^\perp\bigr)\bigr)\right)\\
&\leq\frac{1}{2}\,\vol_1\left((1-\lambda)\bigl(A\cap\bigl(x+H^\perp\bigr)\bigr)+
\lambda\bigl(B\cap\bigl(y+H^\perp\bigr)\bigr)\right)\\
&\leq \frac{1}{2}\,\vol_1\left(\bigl((1-\lambda)A+\lambda B\bigr)\cap\bigl((1-\lambda)x+\lambda y+H^\perp\bigr)\right).
\end{split}
\end{equation*}
Hence $(1-\lambda)x+\lambda y\in\Stsym{H}\bigl( (1-\lambda)A+\lambda B \bigr)$.
\end{proof}

\subsection{Schwarz-type symmetrization of a function}
The second symmetrization of functions which we will use is defined as follows.
\begin{definition}
Given $H\in\L^n_{n-1}$ and a non-negative measurable function $f:\R^n\longrightarrow\R_{\geq0}\cup\{\infty\}$,
the \emph{symmetrization of} $f$ \emph{with respect to} $H^\perp$ will be the function $\Stsym{H^\perp}(f):\R^n\longrightarrow\R_{\geq0}\cup\{\infty\}$ given by
\begin{equation*}
\Stsym{H^\perp}(f)(h+\alpha \nu)=d_\alpha\chi_{_{B_{n-1}}}(h),
\end{equation*}
for $h\in H$ and $\alpha\in\R$, where
\[d_\alpha=\int_{\alpha \nu+H} f_{|_{\left(\alpha \nu+H\right)}}\,\dlat x,\]
$\nu$ is a normal unit vector of $H$ and $B_{n-1}\in\K^{n-1}$ is the \emph{Euclidean ball of volume} $1$ (lying in $H$).

See Figure \ref{figurasimfunciones2}, where, for a clearer representation, we have made a change of axes in relation to those of Figure \ref{figurasimfunciones}.
\end{definition}

\medskip

\begin{figure}[h]
\begin{adjustwidth}{-20mm}{-20mm}
\hspace{0.7cm}
\includegraphics[width=5.25cm]{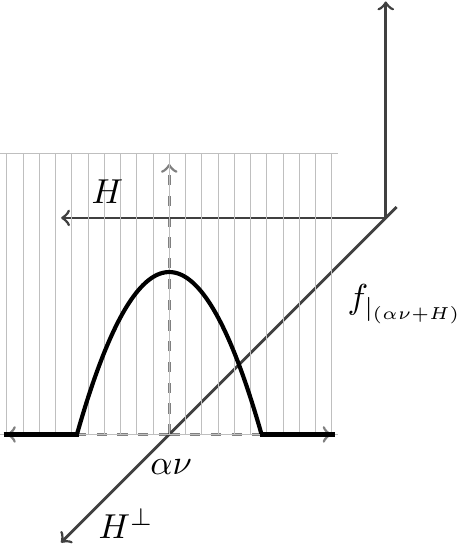}
\hspace{-1.7cm}
\includegraphics[width=5.75cm]{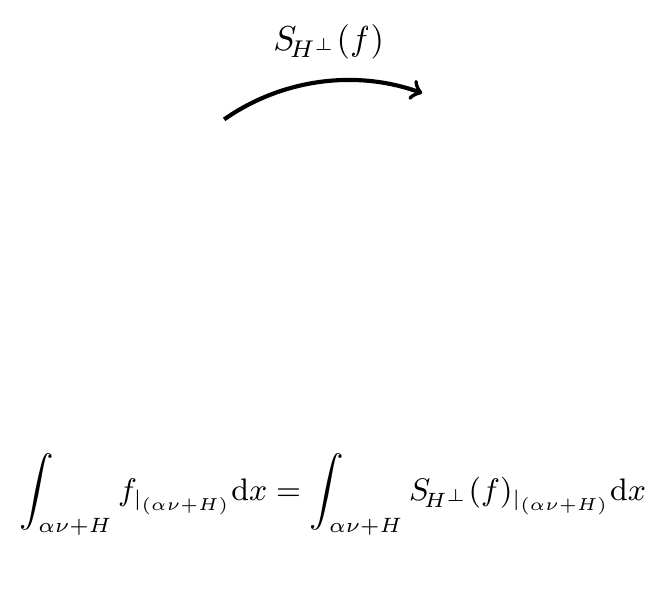}
\hspace{-0.25cm}
\includegraphics[width=6cm]{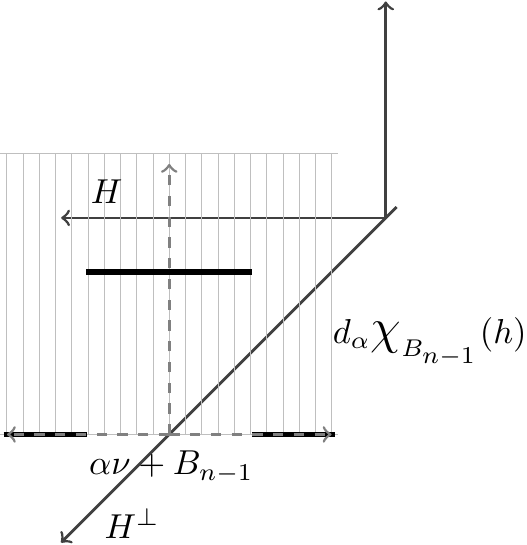}
\caption{}\label{figurasimfunciones2}
\end{adjustwidth}
\end{figure}

\medskip

The choice of the unit volume ball $B_{n-1}$ in the previous definition is not relevant: any other (fixed) convex body (for instance a cube of edge length $1$) could have replaced it
with no essential change. The behavior of the function $\Stsym{H^\perp}(f)$ is basically described by the function $\alpha\longrightarrow d_\alpha$, depending
on the real variable $\alpha$.

By means of Fubini's theorem (together with the fact that the cartesian product of measurable sets is also measurable) it is clear that $\Stsym{H^\perp}(f)$ is also a non-negative measurable function. Moreover we have
\begin{equation}\label{e:Sym2presvol}
\int_{\R^n}\Stsym{H^\perp}(f)\,\dlat x=\int_{\R^n}f\,\dlat x.
\end{equation}
Notice also that the symmetrization $\Stsym{H^\perp}(\cdot)$ is increasing in the sense that if $f\leq g$ then
$\Stsym{H^\perp}(f)\leq\Stsym{H^\perp}(g)$.

In next section (Propositions \ref{p:desigsim_1} and \ref{p:desigsim_2}) we will show that the behavior of these symmetrizations, $\Stsym{H}(\cdot)$, $\Stsym{H^\perp}(\cdot)$, with respect to the operation $\star$ is ``good'' (which may be seen as the analytic counterpart of Proposition \ref{p:InclStSymm}). Roughly speaking we will show that, for both symmetrizations, the symmetral of the Asplund sum is pointwise larger than the Asplund sum of the symmetrals, which will allow us to obtain the inequality of Theorem \ref{t:PLcommon_int_pro}.

\section{Proof of main results}\label{s:main}
In this section we prove Theorem \ref{t:PL-commonproj} and a more general version of Theorem \ref{t:PLcommon_int_pro_log-concave}
(see Theorem \ref{t:PLcommon_int_pro} below). We start by showing a result needed for the proof of Theorem \ref{t:PL-commonproj}.

\begin{prop}\label{p:des_proy_asplund}Let $f,g:\R^n\longrightarrow\R_{\geq0}$ be non-negative functions such that $\proj_H(f),\proj_H(g):H\longrightarrow\R_{\geq0}$ for an $H\in\L^n_{n-1}$ and let $\lambda\in(0,1)$. Then
\begin{equation}\label{e:des_proy_asplund}
\proj_H((1-\lambda) f\star\lambda g)\geq (1-\lambda)\,\proj_H(f)\star\lambda\,\proj_H(g).
\end{equation}
\end{prop}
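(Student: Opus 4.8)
The plan is to unravel both sides of \eqref{e:des_proy_asplund} using the definitions of projection and Asplund sum, and then exhibit a single admissible decomposition on the left that dominates an arbitrary near-optimal decomposition on the right. Fix $h\in H$. By definition,
\[
\proj_H\bigl((1-\lambda) f\star\lambda g\bigr)(h)=\sup_{\alpha\in\R}\ \sup_{(1-\lambda)x_1+\lambda x_2=h+\alpha\nu} f(x_1)^{1-\lambda}g(x_2)^{\lambda},
\]
while the right-hand side is
\[
\bigl((1-\lambda)\proj_H(f)\star\lambda\proj_H(g)\bigr)(h)=\sup_{(1-\lambda)h_1+\lambda h_2=h}\proj_H(f)(h_1)^{1-\lambda}\proj_H(g)(h_2)^{\lambda},
\]
where $h_1,h_2\in H$ and, in turn, $\proj_H(f)(h_1)=\sup_{\beta}f(h_1+\beta\nu)$ and $\proj_H(g)(h_2)=\sup_{\gamma}g(h_2+\gamma\nu)$.

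The key observation is that the right-hand supremum ranges over decompositions that are constrained to lie in $H$ together with independently chosen heights $\beta,\gamma$ along $\nu$, whereas the left-hand supremum ranges over all decompositions of points $h+\alpha\nu$ for every $\alpha$. First I would take any decomposition $h=(1-\lambda)h_1+\lambda h_2$ with $h_1,h_2\in H$ and any $\beta,\gamma\in\R$. Setting $x_1=h_1+\beta\nu$ and $x_2=h_2+\gamma\nu$, I observe that
\[
(1-\lambda)x_1+\lambda x_2=(1-\lambda)h_1+\lambda h_2+\bigl((1-\lambda)\beta+\lambda\gamma\bigr)\nu=h+\alpha\nu,
\]
with $\alpha=(1-\lambda)\beta+\lambda\gamma$. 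Hence this particular pair $(x_1,x_2)$ is admissible in the left-hand side's inner supremum (for that value of $\alpha$), so
\[
\proj_H\bigl((1-\lambda) f\star\lambda g\bigr)(h)\geq f(h_1+\beta\nu)^{1-\lambda}g(h_2+\gamma\nu)^{\lambda}.
\]
Taking the supremum over $\beta$ and $\gamma$ independently on the right (which is legitimate since they appear in separate factors) yields $\proj_H(f)(h_1)^{1-\lambda}\proj_H(g)(h_2)^{\lambda}$, and then taking the supremum over all decompositions $h=(1-\lambda)h_1+\lambda h_2$ gives exactly the right-hand side of \eqref{e:des_proy_asplund}. This establishes the inequality at the arbitrary point $h$, hence everywhere.

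The only genuine subtlety I anticipate concerns the interchange and separation of the suprema over $\beta$ and $\gamma$, and the handling of the extended value $\infty$: since $f,g\ge 0$ and the exponents $1-\lambda,\lambda>0$, the map $(s,t)\mapsto s^{1-\lambda}t^{\lambda}$ is nondecreasing in each argument, so $\sup_{\beta,\gamma} f(h_1+\beta\nu)^{1-\lambda}g(h_2+\gamma\nu)^{\lambda}=\bigl(\sup_\beta f(h_1+\beta\nu)\bigr)^{1-\lambda}\bigl(\sup_\gamma g(h_2+\gamma\nu)\bigr)^{\lambda}$, with the usual conventions for $\infty$. This monotonicity is what makes the separation of variables valid, and it is the one place deserving an explicit word; the rest is a direct comparison of feasible sets for the two suprema, with the left-hand feasible set strictly larger because $\alpha$ is free rather than forced to equal $(1-\lambda)\beta+\lambda\gamma$ for a prescribed split.
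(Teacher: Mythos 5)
Your proof is correct and follows essentially the same route as the paper: for a fixed decomposition $h=(1-\lambda)h_1+\lambda h_2$ in $H$ and arbitrary heights along $\nu$, the point $(1-\lambda)(h_1+\beta\nu)+\lambda(h_2+\gamma\nu)$ is an admissible decomposition of a point on the line $h+H^\perp$, and one then takes suprema over the heights separately and finally over the decompositions of $h$. Your explicit justification of splitting the supremum via monotonicity of $(s,t)\mapsto s^{1-\lambda}t^{\lambda}$ is a point the paper leaves implicit, but it is the same argument.
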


\begin{proof}
Let $\nu$ be a normal unit vector of $H$. Then, for $h, h_1, h_2\in H$ such that $(1-\lambda) h_1+\lambda h_2=h$ and any $\alpha_1,\alpha_2\in\R$, we clearly have
\begin{equation*}
\begin{split}
\proj_H((1-\lambda) f\star\lambda g)(h)&\geq
(1-\lambda) f\star\lambda g \,\bigl((1-\lambda) (h_1+\alpha_1\nu)+\lambda(h_2+\alpha_2\nu)\bigr)\\
&\geq f(h_1+\alpha_1\nu)^{1-\lambda} g(h_2+\alpha_2\nu)^{\lambda}.
\end{split}
\end{equation*}
Hence, by taking suprema over $\alpha_1,\alpha_2\in\R$, it follows
\begin{equation*}
\proj_H((1-\lambda) f\star\lambda g)(h)\geq \proj_H(f)(h_1)^{1-\lambda} \, \proj_H(g)(h_2)^{\lambda}
\end{equation*}
for all $h_1,h_2\in H$ with $(1-\lambda) h_1+\lambda h_2=h$.
\end{proof}

\begin{proof}[Proof of Theorem \ref{t:PL-commonproj}]
Let $U:H\longrightarrow\R_{\geq0}\cup\{\infty\}$ be the (extended) function given by $U(h)=\proj_H(f)(h)=\proj_H(g)(h)$.
We claim that
\begin{equation}\label{e:aspl-commproj}
\proj_H((1-\lambda) f\star\lambda g)\geq U.
\end{equation}
Indeed, if $U:H\longrightarrow\R_{\geq0}$ then
\begin{equation*}
\proj_H((1-\lambda) f\star\lambda g)\geq (1-\lambda)\,U\star\lambda\,U\geq U  \quad\text{ (cf. \eqref{e:des_proy_asplund})};
\end{equation*}
otherwise following the proof of Proposition \ref{p:des_proy_asplund} (taking $h_1=h_2=h$) we may assert that \eqref{e:aspl-commproj} holds.
Moreover,
\begin{equation}\label{e:f,g_acotados}
\bigl\{x\in h+H^\perp:\,f(x)\geq t\bigr\},\,\bigl\{y\in h+H^\perp:\,g(y)\geq
t\bigr\}\neq\emptyset,
\end{equation}
for all $0\leq t<U(h)$.
By the definition of $\star$ together with \eqref{e:f,g_acotados} it is clear that
\begin{equation*}
\begin{split}
&\bigl\{z\in h+H^\perp\,:\bigl((1-\lambda) f\star\lambda g\bigr)(z)\geq t\bigr\}\\
&\supset(1-\lambda)\bigl\{x\in h+H^\perp\,:f(x)\geq
t\bigr\}+\lambda\bigl\{y\in h+H^\perp\,:g(y)\geq t\bigr\}
\end{split}
\end{equation*}
(for all $0\leq t<U(h)$) and hence, by the Brunn-Minkowski inequality, we obtain
\begin{equation*}
\begin{split}
&\vol_1\left(\bigl\{z\in h+H^\perp\,:\bigl((1-\lambda) f\star\lambda g\bigr)(z)\geq t\bigr\}\right)\\
&\geq(1-\lambda)\vol_1\left(\bigl\{x\in h+H^\perp\,:f(x)\geq t\bigr\}\right) +
\lambda\vol_1\left(\bigl\{y\in h+H^\perp\,:g(y)\geq t\bigr\}\right).
\end{split}
\end{equation*}
From the above inequality, and using Fubini's theorem together with \eqref{e:aspl-commproj}
we get
\begin{equation*}
\begin{split}
&\int_{\R^n}(1-\lambda) f\star\lambda g\,\dlat x \\
&=\int_H\int_{h+H^\perp} \bigl((1-\lambda) f\star\lambda g\bigr)_{|_{\left(h+H^\perp\right)}}\,\dlat x \,\dlat h\\
&=\int_H\int_{0}^{\proj_H((1-\lambda) f\star\lambda g)(h)}\vol_1\left(\left\{x\in h+H^\perp:\,\bigl((1-\lambda) f\star\lambda g\bigr)(x)\geq t\right\}\right)\,\dlat t \,\dlat h\\
&\geq\int_H\int_{0}^{U(h)}\vol_1\left(\left\{x\in h+H^\perp:\,\bigl((1-\lambda) f\star\lambda g\bigr)(x)\geq t\right\}\right)\,\dlat t \,\dlat h\\
&\geq\int_H\int_{0}^{U(h)}(1-\lambda)\,\vol_1\left(\left\{x\in h+H^\perp:\,f(x)\geq t\right\}\right)\,\dlat t \,\dlat h\\
&+\int_H\int_{0}^{U(h)} \lambda\,\vol_1\left(\left\{x\in h+H^\perp:\,g(x)\geq t\right\}\right)\,\dlat t \,\dlat h\\
&=(1-\lambda)\int_{\R^n}f\,\dlat x +\lambda\int_{\R^n}g\,\dlat x,
\end{split}
\end{equation*}
as desired.
\end{proof}

\medskip

Given a non-negative (extended) function $f:\R^n\longrightarrow\R_{\geq0}\cup\{\infty\}$, we will denote by $\overline{f}:\R^n\longrightarrow\R_{\geq0}$ the function given by
\begin{equation*}
\overline{f}(x)=\left\{\begin{array}{ll}
0 &\text{ if  } f(x)=\infty,\\[2mm]
f(x) &\text{ otherwise}.
\end{array}\right.
\end{equation*}
Note that if $f$ is measurable then $\overline{f}$ is measurable as well. Indeed $\overline{f}=f\cdot\chi_F$ where
$F=\{x\in\R^n\,:\,f(x)<\infty\}$; the measurability of  $f$ implies that $F$ is a measurable set, i.e., $\chi_F$
is measurable. Hence $\overline{f}$ is measurable.

\begin{teor}\label{t:PLcommon_int_pro}
Let $f,g:\R^n\longrightarrow\R_{\geq0}$ be non-negative measurable functions
such that $(1-\lambda) f\star\lambda g$ is measurable for $\lambda\in(0,1)$ fixed.
If there exists $H\in\L^n_{n-1}$ such that
\[
(1-\lambda)\overline{\Stsym{H}(f)}\star\lambda\overline{\Stsym{H}(g)}, \quad
(1-\lambda)\Stsym{H^\perp}\bigl(\overline{\Stsym{H}(f)}\bigr)\star\lambda\Stsym{H^\perp}\bigl(\overline{\Stsym{H}(g)}\bigr)
\]
are measurable functions and
\begin{equation}\label{e:hyp_intprojcom}
 \int_H \proj_H(f)(x)\,\dlat x=\int_H \proj_H(g)(x)\,\dlat x<\infty
\end{equation}
then
\begin{equation*}
   \int_{\R^n}(1-\lambda) f\star\lambda g\,\dlat x \geq(1-\lambda)\int_{\R^n}f\,\dlat x+\lambda\int_{\R^n}g\,\dlat x.
\end{equation*}
\end{teor}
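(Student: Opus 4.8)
The plan is to reduce the statement to the common-projection case of Theorem \ref{t:PL-commonproj} by successively applying the two symmetrizations of Section \ref{s:background} to $f$ and $g$. Write $\phi=\Stsym{H}(f)$ and $\psi=\Stsym{H}(g)$, and let $\nu$ be a unit normal of $H$.

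First I would Steiner-symmetrize. Since $\overline{\phi}\le\phi$ and $\star$ is monotone, the Asplund-sum inequality for $\Stsym{H}$ (Proposition \ref{p:desigsim_1}, the analytic counterpart of Proposition \ref{p:InclStSymm}) combined with the integral invariance of Proposition \ref{p:propStsym}(ii) gives
\begin{equation*}
\int_{\R^n}(1-\lambda) f\star\lambda g\,\dlat x=\int_{\R^n}\Stsym{H}\bigl((1-\lambda) f\star\lambda g\bigr)\,\dlat x\ge\int_{\R^n}(1-\lambda)\overline{\phi}\star\lambda\overline{\psi}\,\dlat x,
\end{equation*}
the right-hand integral being meaningful by the first measurability hypothesis. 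Next I would Schwarz-symmetrize the finite-valued functions $\overline\phi,\overline\psi$: by \eqref{e:Sym2presvol} and the analogous inequality for $\Stsym{H^\perp}$ (Proposition \ref{p:desigsim_2}),
\begin{equation*}
\int_{\R^n}(1-\lambda)\overline{\phi}\star\lambda\overline{\psi}\,\dlat x\ge\int_{\R^n}(1-\lambda)\Stsym{H^\perp}(\overline{\phi})\star\lambda\Stsym{H^\perp}(\overline{\psi})\,\dlat x,
\end{equation*}
which is legitimate by the second measurability hypothesis.

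The crux is to verify that $\Stsym{H^\perp}(\overline{\phi})$ and $\Stsym{H^\perp}(\overline{\psi})$ share a common projection onto $H$. From the definition of $\Stsym{H^\perp}$ one has $\proj_H\bigl(\Stsym{H^\perp}(\overline{\phi})\bigr)=\bigl(\sup_\alpha d_\alpha\bigr)\chi_{B_{n-1}}$, where $d_\alpha=\int_{\alpha\nu+H}\overline{\phi}\,\dlat x$, so it suffices to evaluate the constant $\sup_\alpha d_\alpha$. Here the Steiner symmetrization does the work: since $\phi$ is symmetric and nonincreasing along each line $h+\R\nu$ with its maximum attained at $\alpha=0$, and since $\{\phi=\infty\}\subset H$ is $\vol_n$-null, one gets $d_\alpha\le d_0=\int_H\proj_H(f)\,\dlat x$ for every $\alpha$ (using Proposition \ref{p:propStsym}(iii)); hence $\sup_\alpha d_\alpha=\int_H\proj_H(f)\,\dlat x$. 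The identical computation for $\psi$ yields $\int_H\proj_H(g)\,\dlat x$, and hypothesis \eqref{e:hyp_intprojcom} makes the two constants equal, so $\proj_H\bigl(\Stsym{H^\perp}(\overline{\phi})\bigr)=\proj_H\bigl(\Stsym{H^\perp}(\overline{\psi})\bigr)$.

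Finally I would invoke Theorem \ref{t:PL-commonproj} for the pair $\Stsym{H^\perp}(\overline\phi),\Stsym{H^\perp}(\overline\psi)$ and rewrite the masses: by \eqref{e:Sym2presvol}, Proposition \ref{p:propStsym}(ii), and $\int_{\R^n}\overline\phi\,\dlat x=\int_{\R^n}f\,\dlat x$ (valid since $\{\phi=\infty\}$ is null), one has $\int_{\R^n}\Stsym{H^\perp}(\overline\phi)\,\dlat x=\int_{\R^n}f\,\dlat x$ and likewise for $g$, whence
\begin{equation*}
\int_{\R^n}(1-\lambda)\Stsym{H^\perp}(\overline{\phi})\star\lambda\Stsym{H^\perp}(\overline{\psi})\,\dlat x\ge(1-\lambda)\int_{\R^n}f\,\dlat x+\lambda\int_{\R^n}g\,\dlat x.
\end{equation*}
Chaining the three displayed inequalities proves the theorem. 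The main obstacle is the third paragraph: computing the projection of a Schwarz symmetral and showing that the slice mass $d_\alpha$ peaks on $H$, which is precisely where the equal-projection-integral hypothesis enters. The accompanying measurability bookkeeping --- the purpose of the truncation $\overline{\,\cdot\,}$ and of the two technical hypotheses --- is the secondary difficulty to be handled with care.
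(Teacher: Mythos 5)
Your proposal is correct and follows essentially the same route as the paper's proof: symmetrize first by $\Stsym{H}$ and then by $\Stsym{H^\perp}$ (via Propositions \ref{p:desigsim_1} and \ref{p:desigsim_2} together with the mass-preservation properties), show that the doubly symmetrized functions have a common projection onto $H$ because $\sup_\alpha d_\alpha=d_0=\int_H\proj_H(f)\,\dlat x=\int_H\proj_H(g)\,\dlat x$, and finish with Theorem \ref{t:PL-commonproj}; the paper merely merges your two integral steps into one pointwise chain of inequalities before integrating, and states the equal-projection identity \eqref{e:projdoublesymm} without the explicit slice computation you supply. One minor slip worth fixing: the set $\bigl\{\Stsym{H}(f)=\infty\bigr\}$ need not be contained in $H$ (it is a union of segments centered on $H$ lying over the $(n-1)$-null set $\{h\in H:\proj_H(f)(h)=\infty\}$); what your argument actually requires --- and what \eqref{e:hyp_intprojcom} provides --- is that $\Stsym{H}(f)$ is finite on almost every line $h+H^\perp$, so that $\overline{\Stsym{H}(f)}$ agrees with $\Stsym{H}(f)$ there and in particular a.e.\ on $H$, giving $d_\alpha\le d_0$ and $d_0=\int_H\proj_H(f)\,\dlat x$.
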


We first see how this result implies in particular Theorem \ref{t:PLcommon_int_pro_log-concave}.

\medskip

\begin{proof}[Proof of Theorem \ref{t:PLcommon_int_pro_log-concave}]
If $\phi\,:\,\R^n\longrightarrow\R_{\geq0}$ is a log-concave function satisfying
the condition
\begin{equation*}\label{lemmaCF}
\lim_{|x|\to\infty}\phi(x)=0,
\end{equation*}
then there exist constants $K, a>0$ such that
$$
\phi(x)\le K e^{-a\|x\|}
$$
for every $x\in\R^n$ (see \cite[Lemma 2.5]{CF}). In particular $\phi$ is bounded. It is an easy exercise to check that
$\Stsym{H}(\phi)$ is log-concave as well and, by the boundedness of $\phi$, is also finite, since it is also bounded.
Consequently $\overline{\Stsym{H}(\phi)}=\Stsym{H}(\phi)$. Moreover, by the log-concavity of $\Stsym{H}(\phi)$ and \cite[Corollary 3.5]{BL}, the function
$\alpha\longrightarrow d_\alpha$ in the definition of $\Stsym{H^\perp}\bigl(\overline{\Stsym{H}(\phi)}\bigr)$
is log-concave and then
$\Stsym{H^\perp}\bigl(\overline{\Stsym{H}(\phi)}\bigr)$ is still log-concave, as a product of log-concave functions.

If we apply these considerations to the functions $f$ and $g$ in the statement of the present theorem,
we get that $\overline{\Stsym{H}(f)}$, $\overline{\Stsym{H}(g)}$, $\Stsym{H^\perp}\bigl(\overline{\Stsym{H}(f)}\bigr)$ and $\Stsym{H^\perp}\bigl(\overline{\Stsym{H}(g)}\bigr)$ are log-concave functions. On the other hand the Asplund
sum preserves log-concavity, so that
\[
(1-\lambda)\overline{\Stsym{H}(f)}\star\lambda\overline{\Stsym{H}(g)}, \quad
(1-\lambda)\Stsym{H^\perp}\bigl(\overline{\Stsym{H}(f)}\bigr)\star\lambda\Stsym{H^\perp}\bigl(\overline{\Stsym{H}(g)}\bigr)
\]
are measurable functions. The proof is concluded applying Theorem \ref{t:PLcommon_int_pro}.
\end{proof}

\begin{remark*}
In the statement of the above theorem, we can exchange the condition of decaying to zero at infinity (for both functions $f$ and $g$) for that of boundedness, and we would still obtain the same inequality.
\end{remark*}

In order to establish Theorem \ref{t:PLcommon_int_pro} we need to study the interaction between the symmetrizations $\Stsym{H}(\cdot)$, $\Stsym{H^\perp}(\cdot)$ (for a given
$H\in\L^n_{n-1}$) and the Asplund sum; this is done in Propositions \ref{p:desigsim_1} and \ref{p:desigsim_2}.

\begin{prop}\label{p:desigsim_1}
Let $f,g:\R^n\longrightarrow\R_{\geq0}$ be non-negative measurable functions such that $(1-\lambda) f\star\lambda g$ is measurable for $\lambda\in(0,1)$ fixed and let $H\in\L^n_{n-1}$ be a hyperplane. Then
\begin{equation}\label{e:desigsim_1}
\Stsym{H}((1-\lambda) f\star\lambda g)\geq(1-\lambda)\overline{\Stsym{H}(f)}\star\lambda\overline{\Stsym{H}(g)}.
\end{equation}
\end{prop}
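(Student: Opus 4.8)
The plan is to lift the inequality to the level of (strict) epigraphs in $\R^{n+1}$, where the Asplund sum linearizes and Proposition~\ref{p:InclStSymm} becomes applicable. Write $f=e^{-u}$, $g=e^{-v}$ and $(1-\lambda)f\star\lambda g=e^{-w}$, so that $u,v:\R^n\to\R\cup\{+\infty\}$ are measurable and $w$ is the $\lambda$-weighted infimal convolution $w(z)=\inf\{(1-\lambda)u(z_1)+\lambda v(z_2):(1-\lambda)z_1+\lambda z_2=z\}$. The first step is the elementary observation that, precisely because the logarithm turns the geometric combination of values in the definition of $\star$ into an arithmetic one, the Asplund sum becomes a Minkowski combination at the level of strict epigraphs:
\[
\stepi(w)=(1-\lambda)\stepi(u)+\lambda\stepi(v).
\]
Both inclusions are immediate: given $s>w(z)$ one picks a near-optimal splitting of $z$ and raises both heights by the same positive surplus, while conversely any Minkowski combination of two strict epigraph points lies strictly above $w$.

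Next I would symmetrize. Since $f,g$ (hence $u,v$) are measurable, $\stepi(u)$ and $\stepi(v)$ are measurable, and by the displayed identity so is their combination $\stepi(w)$ (here $w$ is measurable because $(1-\lambda)f\star\lambda g$ is, by hypothesis). Applying Proposition~\ref{p:InclStSymm} in $\R^{n+1}$ with the hyperplane $\widetilde{H}\in\L^{n+1}_n$ to $A=\stepi(u)$ and $B=\stepi(v)$ yields
\[
\stepi(w_H)=\Stsym{\widetilde{H}}\bigl(\stepi(w)\bigr)\supseteq(1-\lambda)\Stsym{\widetilde{H}}\bigl(\stepi(u)\bigr)+\lambda\Stsym{\widetilde{H}}\bigl(\stepi(v)\bigr)=(1-\lambda)\stepi(u_H)+\lambda\stepi(v_H),
\]
where the outer equalities are exactly the defining relations for $u_H,v_H,w_H$, so that $\Stsym{H}((1-\lambda)f\star\lambda g)=e^{-w_H}$, $\Stsym{H}(f)=e^{-u_H}$ and $\Stsym{H}(g)=e^{-v_H}$. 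If $f$ or $g$ is identically $0$ the asserted inequality is trivial, so one may assume $A,B\neq\emptyset$.

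Finally I would read this set inclusion back as the desired pointwise inequality, which is where the truncation bar is needed. Fix $z$ and a splitting $z=(1-\lambda)z_1+\lambda z_2$; it suffices to show $\Stsym{H}((1-\lambda)f\star\lambda g)(z)\ge \overline{\Stsym{H}(f)}(z_1)^{1-\lambda}\,\overline{\Stsym{H}(g)}(z_2)^{\lambda}$ and then take the supremum over splittings. If the right-hand side is $0$ there is nothing to prove; otherwise $\Stsym{H}(f)(z_1)$ and $\Stsym{H}(g)(z_2)$ are finite and positive, i.e. $u_H(z_1),v_H(z_2)\in\R$, and there the bar is inactive. For any $s_1>u_H(z_1)$ and $s_2>v_H(z_2)$ the inclusion places the point $\bigl(z,(1-\lambda)s_1+\lambda s_2\bigr)$ in $\stepi(w_H)$, so $w_H(z)<(1-\lambda)s_1+\lambda s_2$; letting $s_1\downarrow u_H(z_1)$ and $s_2\downarrow v_H(z_2)$ gives $w_H(z)\le(1-\lambda)u_H(z_1)+\lambda v_H(z_2)$, which exponentiates to the claim.

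The genuinely delicate point — and the reason the truncated functions $\overline{\Stsym{H}(f)}$ appear in the statement rather than the symmetrals themselves — is that Steiner symmetrization can send a finite function to one attaining the value $+\infty$ (equivalently $u_H=-\infty$): over such a point the whole vertical line lies in $\stepi(u_H)$, and the inclusion then forces $w_H=-\infty$ as well, so the \emph{raw} Asplund sum of the symmetrals is uninformative there. Restricting attention, via the bar, to the points where the symmetrals are finite and positive is exactly what makes the last step go through; everything else reduces to bookkeeping of the $\pm\infty$ values of $u,v,u_H,v_H$ together with the standard fact that strict epigraphs of measurable functions are measurable.
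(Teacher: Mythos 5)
Your proposal is correct and follows essentially the same route as the paper: write $f=e^{-u}$, $g=e^{-v}$, linearize the Asplund sum via the strict-epigraph identity for the infimal convolution, apply Proposition~\ref{p:InclStSymm} in $\R^{n+1}$ with the hyperplane $\widetilde{H}$, and handle the possible $+\infty$ values of the symmetrals via the truncation $\overline{\,\cdot\,}$. The only (immaterial) difference is at the end: the paper inserts the inclusion $\stepi(\tilde u_H)\subset\stepi(u_H)$ into the chain and invokes the epigraph identity once more for the truncated functions, whereas you unwind the same inclusion pointwise with a limiting argument.
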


\begin{proof}
Writing $f=e^{-u}$, $g=e^{-v}$, we have that $(1-\lambda) f\star\lambda g=e^{-w}$ with $w=(1-\lambda)u\oplus\lambda v$,
where $\oplus$ denotes the infimal convolution of $u,v$ (see e.g. \cite[p.~34, 38]{Ro}, \cite[Section 1.H]{RoWe}) given by
\begin{equation*}
\bigl((1-\lambda)u\oplus\lambda v\bigr)(t)=\inf\{(1-\lambda)u(x)+\lambda v(y):\, (1-\lambda)x+\lambda y=t\}.
\end{equation*}
The strict epigraph of the infimal convolution satisfies (see e.g. \cite[p.~25]{RoWe})
\begin{equation}\label{e:infconvepi}
\stepi((1-\lambda)u\oplus\lambda v)=(1-\lambda)\stepi(u)+\lambda\stepi(v).
\end{equation}
Write $\overline{\Stsym{H}(f)}=e^{-{\tilde u_H}}$, $\overline{\Stsym{H}(g)}=e^{-{\tilde v_H}}$
(i.e.,  $\tilde u_H=-\log(\overline{\Stsym{H}(f)})$ and $\tilde v_H=-\log(\overline{\Stsym{H}(g)}))$.
We may assume without loss of generality that
both $f$ and $g$ are not identically zero so that $\stepi({\tilde u_H})$, $\stepi({\tilde v_H})\neq\emptyset$. By
\eqref{e:infconvepi}, \eqref{e:InclStSymm} and the definition of $\tilde u_H$, $\tilde v_H$, we have
\begin{equation}\label{e:pruebadesigsim_1}
\begin{split}
\stepi(w_H)&=\Stsym{\widetilde{H}}(\stepi(w))\\
&=\Stsym{\widetilde{H}}((1-\lambda)\stepi(u)+\lambda\stepi(v))\\
&\supset(1-\lambda)\Stsym{\widetilde{H}}(\stepi(u))+\lambda\Stsym{\widetilde{H}}(\stepi(v))\\
&=(1-\lambda)\stepi(u_H)+\lambda\stepi(v_H)\\
&\supset(1-\lambda)\stepi(\tilde u_H)+\lambda\stepi(\tilde v_H)=\stepi((1-\lambda)\tilde u_H\oplus\lambda \tilde v_H).
\end{split}
\end{equation}
This is equivalent to $w_H\leq(1-\lambda)\tilde u_H\oplus\lambda \tilde v_H$; therefore inequality \eqref{e:desigsim_1} holds.
\end{proof}

\begin{remark*} Notice that it was necessary to introduce $\overline{\,\cdot\,}$, as the Asplund sum of two non-negative functions $f$ and $g$ that may attain $\infty$ is in general not well defined. We also point out that the ``$<\!\infty$'' assumption in \eqref{e:hyp_intprojcom} has arisen in order to avoid some ambiguities of the type ``$\infty\cdot0$'' (see the proofs of Proposition \ref{p:desigsim_2} and Theorem \ref{t:PLcommon_int_pro}). All these conflicts will disappear in Section \ref{s:Extension to Borell-Brascamp-Lieb inequalities} when working with the $p$th mean (for $p<0$) instead of the $0$th mean.

On the other hand for real-valued functions,
for instance when working with bounded functions, in the penultimate line of \eqref{e:pruebadesigsim_1} we would have
\begin{equation*}
(1-\lambda)\stepi(u_H)+\lambda\stepi(v_H)=\stepi((1-\lambda)u_H\oplus\lambda v_H),
\end{equation*}
obtaining
$\Stsym{H}((1-\lambda) f\star\lambda g)\geq(1-\lambda)\Stsym{H}(f)\star\lambda\Stsym{H}(g)$.
\end{remark*}

\begin{prop}\label{p:desigsim_2}
Let $f,g:\R^n\longrightarrow\R_{\geq0}$ be non-negative measurable functions such that $(1-\lambda) f\star\lambda g$ is measurable for some fixed
$\lambda\in(0,1)$. Assume also that there exists $H\in\L^n_{n-1}$ such that
\begin{equation*}
\int_{\alpha \nu+H} f_{|_{\left(\alpha \nu+H\right)}}\,\dlat x, \; \int_{\alpha \nu+H} g_{|_{\left(\alpha \nu+H\right)}}\,\dlat x <\infty \;\; \text{for all } \alpha\in\R.
\end{equation*}
Then
\begin{equation*}\label{e:desigsim_2}
\Stsym{H^\perp}((1-\lambda) f\star\lambda g)\geq(1-\lambda)\Stsym{H^\perp}(f)\star\lambda\Stsym{H^\perp}(g).
\end{equation*}
\end{prop}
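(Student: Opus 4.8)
The plan is to evaluate both sides of \eqref{e:desigsim_2} slice by slice along the lines $\alpha\nu+H$ and reduce the claimed pointwise inequality to a one-parameter comparison of the slice-integrals $d_\alpha$, which will then follow from the Pr\'ekopa-Leindler inequality on the hyperplane $H\cong\R^{n-1}$. Throughout write $F=(1-\lambda) f\star\lambda g$ and, for $\alpha\in\R$, denote by $f^\alpha,g^\alpha,F^\alpha:H\longrightarrow\R_{\geq0}$ the restrictions $f^\alpha(h)=f(h+\alpha\nu)$, and analogously for $g$ and $F$, so that in the notation of the definition of $\Stsym{H^\perp}(\cdot)$ the slice-integrals are $d_\alpha^f=\int_H f^\alpha\,\dlat h$, $d_\alpha^g=\int_H g^\alpha\,\dlat h$ and $d_\alpha^F=\int_H F^\alpha\,\dlat h$.

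First I would compute the right-hand side of \eqref{e:desigsim_2} explicitly at a point $h+\alpha\nu$. Decomposing $z_i=h_i+\alpha_i\nu$, the defining supremum of the Asplund sum factorizes: the characteristic-function factors depend only on $h_1,h_2$ (subject to $(1-\lambda)h_1+\lambda h_2=h$) while the coefficients $d_{\alpha_i}$ depend only on $\alpha_1,\alpha_2$ (subject to $(1-\lambda)\alpha_1+\lambda\alpha_2=\alpha$). Using $\chi_{B_{n-1}}^{\,1-\lambda}=\chi_{B_{n-1}}$ together with the convexity of $B_{n-1}$, which gives $\sup_{(1-\lambda)h_1+\lambda h_2=h}\chi_{B_{n-1}}(h_1)\chi_{B_{n-1}}(h_2)=\chi_{B_{n-1}}(h)$ (the value $1$ being realized by $h_1=h_2=h$ when $h\in B_{n-1}$), one obtains
\[
\bigl((1-\lambda)\Stsym{H^\perp}(f)\star\lambda\Stsym{H^\perp}(g)\bigr)(h+\alpha\nu)=\chi_{B_{n-1}}(h)\sup_{(1-\lambda)\alpha_1+\lambda\alpha_2=\alpha}(d_{\alpha_1}^f)^{1-\lambda}(d_{\alpha_2}^g)^{\lambda}.
\]
Since the left-hand side of \eqref{e:desigsim_2} equals $d_\alpha^F\,\chi_{B_{n-1}}(h)$ by definition, and both sides vanish off $B_{n-1}$, the whole proposition reduces to the scalar inequality $d_\alpha^F\geq (d_{\alpha_1}^f)^{1-\lambda}(d_{\alpha_2}^g)^{\lambda}$ for every pair $\alpha_1,\alpha_2$ with $(1-\lambda)\alpha_1+\lambda\alpha_2=\alpha$.

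To prove this scalar inequality I would record the pointwise bound $F^\alpha\geq(1-\lambda)f^{\alpha_1}\star\lambda g^{\alpha_2}$ on $H$: for $h=(1-\lambda)h_1+\lambda h_2$ the definition of the Asplund sum yields $F(h+\alpha\nu)\geq f(h_1+\alpha_1\nu)^{1-\lambda}g(h_2+\alpha_2\nu)^{\lambda}$, and taking the supremum over such decompositions gives the bound. In particular $F^\alpha$ is a measurable majorant of $f^{\alpha_1},g^{\alpha_2}$ in the sense of condition \eqref{e:PrekopaLeindlerCondicion} on $H\cong\R^{n-1}$. Integrating and invoking the Pr\'ekopa-Leindler inequality on $H$ in this majorant form then gives $d_\alpha^F=\int_H F^\alpha\,\dlat h\geq\bigl(\int_H f^{\alpha_1}\,\dlat h\bigr)^{1-\lambda}\bigl(\int_H g^{\alpha_2}\,\dlat h\bigr)^{\lambda}=(d_{\alpha_1}^f)^{1-\lambda}(d_{\alpha_2}^g)^{\lambda}$, and taking the supremum over admissible $\alpha_1,\alpha_2$ closes the reduction.

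The main obstacle is measurability. To apply Pr\'ekopa-Leindler I need $f^{\alpha_1},g^{\alpha_2}$ and the majorant $F^\alpha$ measurable on $H$: the finiteness hypothesis on the slice-integrals ensures measurability of $f^{\alpha_1},g^{\alpha_2}$ for every $\alpha_i$, while the assumed measurability of $F$ yields, via Fubini, measurability of $F^\alpha$ for almost every $\alpha$. The key point is that, working with condition \eqref{e:PrekopaLeindlerCondicion} and the explicitly given measurable majorant $F^\alpha$, I never need the $(n-1)$-dimensional Asplund sum $(1-\lambda)f^{\alpha_1}\star\lambda g^{\alpha_2}$ itself to be measurable. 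Consequently the scalar inequality, and hence \eqref{e:desigsim_2}, holds for almost every slice $\alpha\nu+H$, which is precisely what is required for the integration performed in the proof of Theorem \ref{t:PLcommon_int_pro}; the finiteness assumption on the slice-integrals also rules out the indeterminate product $0\cdot\infty$ on the right-hand side.
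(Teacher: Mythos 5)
Your argument is correct and is essentially the paper's own proof: both reduce the claimed pointwise inequality to the scalar estimate $d_\alpha^F\geq (d_{\alpha_1}^f)^{1-\lambda}(d_{\alpha_2}^g)^{\lambda}$ whenever $(1-\lambda)\alpha_1+\lambda\alpha_2=\alpha$, obtained by applying the Pr\'ekopa-Leindler inequality on each slice $\alpha\nu+H$ with the restriction of $(1-\lambda)f\star\lambda g$ to that slice serving as the majorant in the sense of \eqref{e:PrekopaLeindlerCondicion} (so that, as you rightly observe, the $(n-1)$-dimensional Asplund sum itself never needs to be measurable); your explicit factorization of $(1-\lambda)\Stsym{H^\perp}(f)\star\lambda\Stsym{H^\perp}(g)$ via the convexity of $B_{n-1}$ is exactly the content of the paper's closing display, where the same inequality is stated for arbitrary decompositions $h=(1-\lambda)h_1+\lambda h_2$. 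The only divergence is your final retreat to ``almost every $\alpha$'': this weakening is unnecessary, because for $\Stsym{H^\perp}\bigl((1-\lambda)f\star\lambda g\bigr)$ even to be defined --- which the statement presupposes --- the restriction of $(1-\lambda)f\star\lambda g$ to \emph{every} slice $\alpha\nu+H$ must be measurable, so that $d_\alpha^F$ makes sense; under that standing assumption your own argument yields the inequality for every $\alpha$ and every $h$, which is how the paper proceeds (it applies the slice-wise inequality without comment), and the conclusion of Proposition \ref{p:desigsim_2} is then genuinely pointwise, not just sufficient for the integration in Theorem \ref{t:PLcommon_int_pro}.
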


\begin{proof}
We use the Pr\'ekopa-Leindler inequality (Theorem \ref{t:PrekopaLeindler}).
Indeed, given $\alpha_1,\alpha_2\in\R$ and $\alpha=(1-\lambda)\alpha_1+\lambda\alpha_2$ we clearly have
\begin{equation*}
\begin{split}
&\int_{\alpha \nu+H}\bigl((1-\lambda) f\star\lambda g\bigr)_{|_{\left(\alpha \nu+H\right)}}\,\dlat x\\
&\geq\left(\int_{\alpha_1 \nu+H} f_{|_{\left(\alpha_1 \nu+H\right)}}\,\dlat x\right)^{1-\lambda} \,\left(\int_{\alpha_2 \nu+H} g_{|_{\left(\alpha_2 \nu+H\right)}}\,\dlat x\right)^{\lambda},
\end{split}
\end{equation*}
which allows us to assert that for given $h_1,h_2\in H$
\begin{equation*}
\begin{split}
&\Stsym{H^\perp}\bigl((1-\lambda) f\star\lambda g\bigr)\bigl(((1-\lambda)h_1+\lambda h_2)+((1-\lambda)\alpha_1+\lambda\alpha_2)\nu\bigr)\\
&\geq\Stsym{H^\perp}(f)(h_1+\alpha_1\nu)^{1-\lambda}\,\Stsym{H^\perp}(g)(h_2+\alpha_2\nu)^{\lambda}. \qedhere
\end{split}
\end{equation*}
\end{proof}
As it occurs for the (linear improvements of) Brunn-Minkowski inequality, Theorems \ref{t:BMproye} and \ref{t:BMproye_vol},
the same inequality can be obtained when a condition on the integral of the projection is assumed.

\begin{proof}[Proof of Theorem \ref{t:PLcommon_int_pro}]
Hypothesis \eqref{e:hyp_intprojcom} together with the definition of $\Stsym{H}$, $\Stsym{H^\perp}$ and $\overline{\,\cdot\,}$ implies that (cf. also Proposition \ref{p:propStsym} (iii))
\begin{equation}\label{e:projdoublesymm}
\begin{split}
\proj_H\Bigl(\Stsym{H^\perp}\bigl(\,\overline{\Stsym{H}(f)}\,\bigr)\Bigr)&=
\proj_H\Bigl(\Stsym{H^\perp}\bigl(\Stsym{H}(f)\bigr)\Bigr)\\
=\proj_H\Bigl(\Stsym{H^\perp}\bigl(\Stsym{H}(g)\bigr)\Bigr)
&=\proj_H\Bigl(\Stsym{H^\perp}\bigl(\overline{\Stsym{H}(g)}\bigr)\Bigr).
\end{split}
\end{equation}
Moreover, since the projections onto $H$ of $f$ and $g$ are integrable, by means of Fubini's theorem we have (cf. Proposition \ref{p:propStsym} (ii) and \eqref{e:Sym2presvol})
\begin{equation}\label{e:symspresvol}
\int_{\R^n}\Stsym{H^\perp}\bigl(\,\overline{\Stsym{H}(f)}\,\bigr)\,\dlat x=\int_{\R^n}f\,\dlat x, \quad \int_{\R^n}\Stsym{H^\perp}\bigl(\,\overline{\Stsym{H}(g)}\,\bigr)\,\dlat x=\int_{\R^n}g\,\dlat x.
\end{equation}
On the other hand, Proposition \ref{p:desigsim_1} together with the monotonicity of $\Stsym{H^\perp}(\cdot)$ and
Proposition  \ref{p:desigsim_2} imply
\begin{equation}\label{e:ineqinvolvingsym}
\begin{split}
\Stsym{H^\perp}\bigl(\Stsym{H}((1-\lambda) f\star\lambda g)\bigr)
&\geq\Stsym{H^\perp}\bigl((1-\lambda)\overline{\Stsym{H}(f)}\star\lambda\overline{\Stsym{H}(g)}\,\bigr)\\
&\geq(1-\lambda)\,\Stsym{H^\perp}\bigl(\,\overline{\Stsym{H}(f)}\,\bigr)\star\lambda
\,\Stsym{H^\perp}\bigl(\,\overline{\Stsym{H}(g)}\,\bigr).
\end{split}
\end{equation}
Therefore, applying Proposition \ref{p:propStsym} (ii) together with \eqref{e:Sym2presvol},
\eqref{e:ineqinvolvingsym}, Theorem \ref{t:PL-commonproj} (taking into account \eqref{e:projdoublesymm}) and \eqref{e:symspresvol}, respectively, we get
\begin{equation*}
\begin{split}
  \int_{\R^n}(1-\lambda) f\star\lambda g\,\dlat x
  &=\int_{\R^n}\Stsym{H^\perp}\bigl(\Stsym{H}((1-\lambda) f\star\lambda g)\bigr)\,\dlat x\\
  &\geq\int_{\R^n}(1-\lambda)\,\Stsym{H^\perp}\bigl(\,\overline{\Stsym{H}(f)}\,\bigr)\star\lambda
  \,\Stsym{H^\perp}\bigl(\,\overline{\Stsym{H}(g)}\,\bigr)\,\dlat x\\
  &\geq(1-\lambda)\int_{\R^n}\Stsym{H^\perp}\bigl(\,\overline{\Stsym{H}(f)}\,\bigr)\,\dlat x +\lambda\int_{\R^n}\Stsym{H^\perp}\bigl(\,\overline{\Stsym{H}(g)}\,\bigr)\,\dlat x \\
  &=(1-\lambda)\int_{\R^n}f\,\dlat x+\lambda\int_{\R^n}g\,\dlat x,
\end{split}
\end{equation*}
as desired.
\end{proof}

As a consequence of the above theorem, we may immediately obtain the following refinement of the Brunn-Minkowski inequality (cf. Theorems \ref{t:BMproye} and \ref{t:BMproye_vol}) for the more general case of measurable sets.

\begin{corollary}\label{c:BMproye_vol}
Let $A,B\subset\R^n$ be nonempty measurable sets and let $\lambda\in(0,1)$ be such that $(1-\lambda)A+\lambda B$ is measurable.
If there exists $H\in\L^n_{n-1}$ such that
\begin{enumerate}

  \medskip

  \item[(i)] $\vol_{n-1}(A|H)=\vol_{n-1}(B|H)<\infty,$

  \medskip

  \item[(ii)] $(1-\lambda)\Stsym{H}(A)+\lambda \Stsym{H}(B)$
        is a measurable set,

  \medskip

  \item[(iii)]$(1-\lambda)\Stsym{H^\perp}\bigl(\chi_{_{\Stsym{H}(A)}}\bigr)
             \star\lambda\Stsym{H^\perp}\bigl(\chi_{_{\Stsym{H}(B)}}\bigr)$
        is a measurable function,
  \medskip

\end{enumerate}
then
\begin{equation*}
\vol\bigl((1-\lambda)A+\lambda B\bigr)
\geq(1-\lambda)\vol(A)+\lambda\vol(B).
\end{equation*}
\end{corollary}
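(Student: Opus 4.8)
The plan is to deduce this corollary from Theorem~\ref{t:PLcommon_int_pro} by specializing to the characteristic functions $f=\chi_{_A}$ and $g=\chi_{_B}$. Everything then reduces to translating the three measurability hypotheses and the integral-projection condition \eqref{e:hyp_intprojcom} of that theorem into the set-theoretic conditions (i)--(iii), and reading the conclusion back in terms of volumes.

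First I would set up the dictionary between sets and their characteristic functions. By \eqref{e:AsplCaracFunc} we have $(1-\lambda)\chi_{_A}\star\lambda\chi_{_B}=\chi_{_{(1-\lambda)A+\lambda B}}$, so this Asplund sum is measurable precisely because $(1-\lambda)A+\lambda B$ is assumed measurable, and $\int_{\R^n}\chi_{_A}\,\dlat x=\vol(A)$, $\int_{\R^n}\chi_{_B}\,\dlat x=\vol(B)$. Since the projection of the characteristic function of a set is the characteristic function of its projection, $\proj_H(\chi_{_A})=\chi_{_{A|H}}$, whence $\int_H\proj_H(\chi_{_A})(x)\,\dlat x=\vol_{n-1}(A|H)$ and similarly for $B$; thus \eqref{e:hyp_intprojcom} is exactly condition~(i), the finiteness ``$<\infty$'' included.

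The one step that is not pure bookkeeping is the identity $\Stsym{H}(\chi_{_A})=\chi_{_{\Stsym{H}(A)}}$. This follows from the construction of $\Stsym{H}(\cdot)$: symmetrizing the positive hypograph $\sthyp^+(\chi_{_A})=A\times[0,1)$ with respect to $\widetilde H=H\times\R$ replaces each section $A\cap(h+H^\perp)$ by the centred interval of the same length, which is precisely the corresponding section of $\Stsym{H}(A)$ (this is also consistent with Proposition~\ref{p:propStsym}(iii)). Because $\chi_{_{\Stsym{H}(A)}}$ is $\{0,1\}$-valued it never attains $\infty$, so the truncation $\overline{\,\cdot\,}$ acts trivially and $\overline{\Stsym{H}(\chi_{_A})}=\chi_{_{\Stsym{H}(A)}}$, and likewise for $B$. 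With these identifications the two auxiliary functions of Theorem~\ref{t:PLcommon_int_pro} become, using \eqref{e:AsplCaracFunc} once more, $(1-\lambda)\chi_{_{\Stsym{H}(A)}}\star\lambda\chi_{_{\Stsym{H}(B)}}=\chi_{_{(1-\lambda)\Stsym{H}(A)+\lambda\Stsym{H}(B)}}$, whose measurability is equivalent to condition~(ii), and $(1-\lambda)\Stsym{H^\perp}\bigl(\chi_{_{\Stsym{H}(A)}}\bigr)\star\lambda\Stsym{H^\perp}\bigl(\chi_{_{\Stsym{H}(B)}}\bigr)$, whose measurability is condition~(iii). All hypotheses of Theorem~\ref{t:PLcommon_int_pro} are therefore verified, and its conclusion reads $\vol\bigl((1-\lambda)A+\lambda B\bigr)=\int_{\R^n}(1-\lambda)\chi_{_A}\star\lambda\chi_{_B}\,\dlat x\ge(1-\lambda)\vol(A)+\lambda\vol(B)$, which is the claim. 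I expect no serious obstacle here: the only genuinely substantive point is the commutation $\Stsym{H}(\chi_{_A})=\chi_{_{\Stsym{H}(A)}}$ and the triviality of $\overline{\,\cdot\,}$ on it, while the rest is direct substitution into the theorem.
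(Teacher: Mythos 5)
Your proposal is correct and follows exactly the paper's own route: specialize Theorem~\ref{t:PLcommon_int_pro} to $f=\chi_{_A}$, $g=\chi_{_B}$, using \eqref{e:AsplCaracFunc}, $\proj_H(\chi_{_A})=\chi_{_{A|H}}$, and the identity $\Stsym{H}(\chi_{_M})=\chi_{_{\Stsym{H}(M)}}$ to match the hypotheses (i)--(iii) with those of the theorem. Your write-up simply makes explicit the bookkeeping (including the triviality of $\overline{\,\cdot\,}$ on characteristic functions) that the paper leaves to the reader.
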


\begin{proof}It is enough to consider the functions $f=\chi_{_A}$, $g=\chi_{_B}$ and apply the theorem above (recall that
in this case we have $\proj_H(f)=\chi_{_{A|H}}$, $\proj_H(g)=\chi_{_{B|H}}$ and $(1-\lambda) f\star\lambda g=\chi_{_{(1-\lambda) A+\lambda B}}$
(cf. \eqref{e:AsplCaracFunc}). Notice also that for any measurable set $M$, we have $\Stsym{H}(\chi_{_{M}})=\chi_{_{\Stsym{H}(M)}}$).
\end{proof}

\begin{remark*}


Although the measurability conditions (ii) and (iii) could appear a little bit stronger, they may be also easily fulfilled.
For instance, when working with compact sets $A$ and $B$.
Indeed, since $A$ and $B$ are compact then $\Stsym{H}(A)$ and $\Stsym{H}(B)$ are also compact. Thus condition (ii) holds.
On the other hand, for a general compact set $K$, and by the construction of $\Stsym{H}(\cdot)$ (for sets), the sections of
$\Stsym{H}(K)$ satisfy the following decreasing volume behavior:
\begin{equation*}
\vol_{n-1}\bigl(\Stsym{H}(K)\cap(t_1\nu+H)\bigr)\geq\vol_{n-1}\bigl(\Stsym{H}(K)\cap(t_2\nu+H)\bigr)
\end{equation*}
if $|t_1|\leq |t_2|$ (where $\nu$ is a normal unit vector of $H$). This fact together with the compactness condition of $\Stsym{H}(K)$ imply that
$\Stsym{H^\perp}\bigl(\chi_{_{\Stsym{H}(K)}}\bigr)$ is an upper semi-continuous function.

Now (iii) follows from the fact that
for non-negative upper semi-continuous functions $\phi=e^{-u}$ and $\psi=e^{-v}$ so that $\{x\in\R^n:\, u(x)<\infty\}$, $\{x\in\R^n:\, v(x)<\infty\}$ are bounded, we have (cf. e.g. \cite[p.~25]{RoWe})
\begin{equation*}
\epi((1-\lambda)u\oplus\lambda v)=(1-\lambda)\epi(u)+\lambda\epi(v),
\end{equation*}
where $\epi(\cdot)$ denotes the epigraph of a function.
\end{remark*}

In \cite{BF} (see also \cite[Corollary 1.2.1]{Gi}) a similar result to Theorem
\ref{t:BMproye_vol} was proved, involving sections instead of projections.
The aim of the following result is to prove that the inequality in Theorems \ref{t:PL-commonproj} and \ref{t:PLcommon_int_pro} can be obtained if we replace the projection hypothesis by a suitable section condition.

\begin{teor}\label{t:PL-comvolsect} Let $f,g:\R^n\longrightarrow\R_{\geq0}$ be non-negative measurable functions
such that $(1-\lambda) f\star\lambda g$ is measurable for $\lambda\in(0,1)$ fixed. If there exists
$H\in\L^n_{n-1}$ such that
\begin{equation}\label{e:PL-comvolsect}
\sup_{y\in H^{\bot}}\,\int_{y+H} f_{|_{\left(y+H\right)}}\,\dlat x=
\sup_{y\in H^{\bot}}\,\int_{y+H} g_{|_{\left(y+H\right)}}\,\dlat x<\infty,
\end{equation}
and
$(1-\lambda)\Stsym{H^\perp}(f)\star\lambda\Stsym{H^\perp}(g)$
is a measurable function, then
\begin{equation*}
   \int_{\R^n}(1-\lambda) f\star\lambda g\,\dlat x \geq (1-\lambda)\int_{\R^n}f\,\dlat x +\lambda\int_{\R^n}g\,\dlat x.
\end{equation*}
\end{teor}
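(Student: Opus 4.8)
The plan is to follow the scheme of the proof of Theorem \ref{t:PLcommon_int_pro}, but employing only the symmetrization $\Stsym{H^\perp}(\cdot)$, the point being that the section-integral condition \eqref{e:PL-comvolsect} becomes, after this symmetrization, exactly the common-projection hypothesis of Theorem \ref{t:PL-commonproj}. As a preliminary observation, the finiteness in \eqref{e:PL-comvolsect} forces $\int_{\alpha\nu+H}f_{|_{(\alpha\nu+H)}}\,\dlat x<\infty$ and $\int_{\alpha\nu+H}g_{|_{(\alpha\nu+H)}}\,\dlat x<\infty$ for every $\alpha\in\R$, since each section integral is dominated by the finite supremum. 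Hence the hypotheses of Proposition \ref{p:desigsim_2} are met and we obtain the pointwise bound
\[
\Stsym{H^\perp}\bigl((1-\lambda)f\star\lambda g\bigr)\geq(1-\lambda)\Stsym{H^\perp}(f)\star\lambda\Stsym{H^\perp}(g).
\]

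The crux is the computation of $\proj_H\bigl(\Stsym{H^\perp}(f)\bigr)$. Writing $y=\alpha\nu$ and $d_\alpha=\int_{\alpha\nu+H}f_{|_{(\alpha\nu+H)}}\,\dlat x$, the definition of $\Stsym{H^\perp}$ gives $\Stsym{H^\perp}(f)(h+\alpha\nu)=d_\alpha\,\chi_{_{B_{n-1}}}(h)$, whence
\[
\proj_H\bigl(\Stsym{H^\perp}(f)\bigr)(h)=\sup_{\alpha\in\R}d_\alpha\,\chi_{_{B_{n-1}}}(h)=\Bigl(\sup_{y\in H^\perp}\int_{y+H}f_{|_{(y+H)}}\,\dlat x\Bigr)\chi_{_{B_{n-1}}}(h),
\]
and likewise for $g$. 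By \eqref{e:PL-comvolsect} the two suprema agree, so $\proj_H\bigl(\Stsym{H^\perp}(f)\bigr)=\proj_H\bigl(\Stsym{H^\perp}(g)\bigr)$; being finite (again by \eqref{e:PL-comvolsect}) these projections are $\R_{\geq0}$-valued, so $\Stsym{H^\perp}(f)$ and $\Stsym{H^\perp}(g)$ are genuine non-negative measurable functions meeting the hypotheses of Theorem \ref{t:PL-commonproj}, their Asplund sum being measurable by assumption.

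Finally I would chain the estimates, invoking the integral-preservation identity \eqref{e:Sym2presvol} and Theorem \ref{t:PL-commonproj}:
\[
\int_{\R^n}(1-\lambda)f\star\lambda g\,\dlat x=\int_{\R^n}\Stsym{H^\perp}\bigl((1-\lambda)f\star\lambda g\bigr)\,\dlat x\geq\int_{\R^n}(1-\lambda)\Stsym{H^\perp}(f)\star\lambda\Stsym{H^\perp}(g)\,\dlat x,
\]
and then bound the last integral below, via Theorem \ref{t:PL-commonproj} and a second use of \eqref{e:Sym2presvol}, by
\[
(1-\lambda)\int_{\R^n}\Stsym{H^\perp}(f)\,\dlat x+\lambda\int_{\R^n}\Stsym{H^\perp}(g)\,\dlat x=(1-\lambda)\int_{\R^n}f\,\dlat x+\lambda\int_{\R^n}g\,\dlat x.
\]
The only nontrivial ingredient is the projection computation of the second paragraph; everything else is a routine concatenation of Proposition \ref{p:desigsim_2}, the identity \eqref{e:Sym2presvol} and Theorem \ref{t:PL-commonproj}. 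The conceptual content is that $\Stsym{H^\perp}$ retains only the section-integral profile $\alpha\mapsto d_\alpha$, so that its projection onto $H$ sees nothing but $\sup_\alpha d_\alpha$---precisely the quantity controlled by \eqref{e:PL-comvolsect}.
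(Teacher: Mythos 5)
Your proposal is correct and follows essentially the same route as the paper's own proof: symmetrize with $\Stsym{H^\perp}(\cdot)$, observe that hypothesis \eqref{e:PL-comvolsect} makes the projections onto $H$ of $\Stsym{H^\perp}(f)$ and $\Stsym{H^\perp}(g)$ coincide, and then chain \eqref{e:Sym2presvol}, Proposition \ref{p:desigsim_2} and Theorem \ref{t:PL-commonproj}. Your explicit computation $\proj_H\bigl(\Stsym{H^\perp}(f)\bigr)=\bigl(\sup_{y\in H^\perp}\int_{y+H}f_{|_{(y+H)}}\,\dlat x\bigr)\chi_{_{B_{n-1}}}$ and your remark that the finite supremum guarantees the hypotheses of Proposition \ref{p:desigsim_2} are exactly the (implicit) details the paper leaves to the reader.
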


\begin{proof}
Notice that hypothesis \eqref{e:PL-comvolsect} together with the definition of $\Stsym{H^\perp}$ implies that
\begin{equation*}
\begin{split}
\int_{H}\proj_H\bigl(\Stsym{H^\perp}(f)\bigr)\,\dlat x&=\sup_{y\in H^{\bot}}\,\int_{y+H} f_{|_{\left(y+H\right)}}\,\dlat x\\
=\sup_{y\in H^{\bot}}\,\int_{y+H} g_{|_{\left(y+H\right)}}\,\dlat x&=\int_{H}\proj_H\bigl(\Stsym{H^\perp}(g)\bigr)\,\dlat x,
\end{split}
\end{equation*}
and, furthermore, we have
\begin{equation*}
\proj_H\bigl(\Stsym{H^\perp}(f)\bigr)=\proj_H\bigl(\Stsym{H^\perp}(g)\bigr).
\end{equation*}
Thus, applying \eqref{e:Sym2presvol},
Proposition \ref{p:desigsim_2} and Theorem \ref{t:PL-commonproj}, respectively, we get
\begin{equation*}
\begin{split}
  \int_{\R^n}(1-\lambda) f\star\lambda g\,\dlat x
  &=\int_{\R^n}\Stsym{H^\perp}\bigl((1-\lambda) f\star\lambda g\bigr)\,\dlat x\\
  &\geq\int_{\R^n}(1-\lambda)\,\Stsym{H^\perp}(f)\star\lambda
  \,\Stsym{H^\perp}(g)\,\dlat x\\
  &\geq(1-\lambda)\int_{\R^n}f\,\dlat x+\lambda\int_{\R^n}g\,\dlat x,
\end{split}
\end{equation*}
as desired.
\end{proof}

\begin{remark*}
We would like to point out that Theorem \ref{t:PL-comvolsect} can be obtained as a particular consequence of results contained in the work \cite{DaUr}, where the authors provide a standard proof of this Borell-Brascamp-Lieb type inequality based on induction in the dimension (cf. \cite[Theorem 3.2]{DaUr}). Indeed, they
obtain a ``more general range" for the parameter $p$ proving that the inequality holds not just for $p\geq-1/n$ but for $p\geq-1/(n-1)$. In Section \ref{s:Extension to Borell-Brascamp-Lieb inequalities} we provide an alternative proof of the inequality in this slightly smaller range based on symmetrization procedures. The work \cite{DaUr} deals with integral inequalities providing simple proofs of certain known inequalities (such as the Pr\'ekopa-Leindler and the Borell-Brascamp-Lieb inequality) as well as new ones. Despite the relevance of the proven inequalities, this work seems not to be so well known in the literature.

We also refer the interested reader to the papers \cite{HMacb,Mars} for related topics involving inequalities for functions and measures.
\end{remark*}

We end this section by remarking that (under a mild assumption on measurability)
the analogous result to Corollary \ref{c:BMproye_vol}
for measurable sets $A$ and $B$ with a common maximal volume section (through parallel hyperplanes to a given one $H$)
can be obtained. This is the content of the following result.

\begin{corollary}
Let $A,B\subset\R^n$ be nonempty measurable sets and $\lambda\in(0,1)$ such that $(1-\lambda)A+\lambda B$ is measurable.
If there exists $H\in\L^n_{n-1}$ such that
\begin{equation*}
\sup_{x\in H^{\perp}}\vol_{n-1}\bigl(A\cap(x+H)\bigr)=\sup_{x\in H^{\perp}}\vol_{n-1}\bigl(B\cap(x+H)\bigr)<\infty,
\end{equation*}
then (provided that
$(1-\lambda)\Stsym{H^\perp}\bigl(\chi_{_{A}}\bigr)\star\lambda\Stsym{H^\perp}\bigl(\chi_{_{B}}\bigr)$
is a measurable function)
\begin{equation*}
\vol\bigl((1-\lambda)A+\lambda B\bigr)
\geq(1-\lambda)\vol(A)+\lambda\vol(B).
\end{equation*}
\end{corollary}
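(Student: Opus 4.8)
The plan is to derive this statement from Theorem \ref{t:PL-comvolsect} in exactly the way Corollary \ref{c:BMproye_vol} was derived from Theorem \ref{t:PLcommon_int_pro}: by applying the functional result to the characteristic functions $f=\chi_{_A}$ and $g=\chi_{_B}$. These are non-negative measurable functions, and by \eqref{e:AsplCaracFunc} their Asplund sum is $(1-\lambda)\chi_{_A}\star\lambda\chi_{_B}=\chi_{_{(1-\lambda)A+\lambda B}}$, which is measurable precisely because $(1-\lambda)A+\lambda B$ is assumed to be measurable. Thus the standing hypotheses of Theorem \ref{t:PL-comvolsect} are met.

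Next I would translate the section condition. For fixed $y\in H^{\perp}$, the restriction of $\chi_{_A}$ to the affine hyperplane $y+H$ is the characteristic function of $A\cap(y+H)$ inside $y+H$, so that
\begin{equation*}
\int_{y+H}(\chi_{_A})_{|_{(y+H)}}\,\dlat x=\vol_{n-1}\bigl(A\cap(y+H)\bigr),
\end{equation*}
and likewise for $B$. Taking the supremum over $y\in H^{\perp}$ shows that hypothesis \eqref{e:PL-comvolsect} of the theorem is literally the maximal-section assumption of the corollary (with the finiteness of the common value inherited), while the ``provided that'' clause is exactly the measurability requirement on $(1-\lambda)\Stsym{H^\perp}(f)\star\lambda\Stsym{H^\perp}(g)$ appearing in the theorem. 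One only needs to observe that, since the common supremum is finite, each section integral $d_\alpha=\vol_{n-1}\bigl(A\cap(\alpha\nu+H)\bigr)$ is finite, so that $\Stsym{H^\perp}(\chi_{_A})$ and $\Stsym{H^\perp}(\chi_{_B})$ are genuine (finite-valued) non-negative measurable functions, and the finiteness hypothesis of Proposition \ref{p:desigsim_2} used inside the theorem holds.

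Finally, since $\int_{\R^n}\chi_{_A}\,\dlat x=\vol(A)$, and likewise for $B$ and for $(1-\lambda)A+\lambda B$, the conclusion of Theorem \ref{t:PL-comvolsect} becomes exactly the asserted linear bound $\vol\bigl((1-\lambda)A+\lambda B\bigr)\geq(1-\lambda)\vol(A)+\lambda\vol(B)$. There is essentially no hard step here: the whole argument is a dictionary between sets and their characteristic functions. The only point demanding a little care is to confirm that the finiteness ``$<\infty$'' in the hypothesis is precisely what guarantees that the symmetrized functions $\Stsym{H^\perp}(\chi_{_A})$, $\Stsym{H^\perp}(\chi_{_B})$ are well defined, and that the measurability proviso of the corollary matches the one needed to invoke the theorem.
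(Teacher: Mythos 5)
Your proposal is correct and coincides with the paper's own (implicit) argument: the corollary is obtained precisely by applying Theorem \ref{t:PL-comvolsect} to $f=\chi_{_A}$ and $g=\chi_{_B}$, using \eqref{e:AsplCaracFunc} and the identification of the section integrals with $\vol_{n-1}\bigl(A\cap(y+H)\bigr)$, just as Corollary \ref{c:BMproye_vol} is deduced from Theorem \ref{t:PLcommon_int_pro}. Your remark that finiteness of the common supremum ensures the finiteness hypothesis of Proposition \ref{p:desigsim_2} is exactly the right point of care and matches the paper's intent.
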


\section{Extension to Borell-Brascamp-Lieb inequalities}\label{s:Extension to Borell-Brascamp-Lieb inequalities}

In this section we present an extension of Theorems \ref{t:PL-commonproj} and
\ref{t:PLcommon_int_pro_log-concave} to the Borell-Brascamp-Lieb (``BBL'' for short)
inequalities. In order to describe these inequalities
and our contribution, we need to introduce some further notation. More precisely we recall the definition of the $p$th mean of two non-negative numbers,
where $p$ is a parameter varying in $\R\cup\{\pm\infty\}$; for this definition we follow \cite{BL} (regarding a general reference for $p$th means of non-negative numbers, we refer also to the classic text of Hardy, Littlewood, and P\'olya \cite{HaLiPo}).

Consider first the case $p\in\R$ and $p\ne0$; given $a,b\ge0$ such that $ab\ne0$ and $\lambda\in[0,1]$, we set
\[
M_p(a,b,\lambda)=((1-\lambda)a^p+\lambda b^p)^{1/p}.
\]
For $p=0$ we set
$$
M_0(a,b,\lambda)=a^{1-\lambda}b^\lambda
$$
and, to complete the picture, for $p=\pm \infty$ we define $M_\infty(a,b,\lambda)=\max\{a,b\}$ and
$M_{-\infty}(a,b,\lambda)=\min\{a,b\}$. Finally, if $ab=0$, we will define $M_p(a,b,\lambda)=0$ for all $p\in\R\cup\{\pm\infty\}$. Note that $M_p(a,b,\lambda)=0$, if $ab=0$, is redundant for all $p\leq0$, however it is relevant for $p>0$ (as we will briefly comment later on). Furthermore, for $p\neq0$, we will allow that $a$, $b$ take the value $\infty$ and in that case, as usual, $M_p(a,b,\lambda)$ will be the value that is obtained ``by continuity''.

The next step is to define a family of functional operations based on these
means, including the Asplund sum for the special case $p=0$. Given non-negative
functions $f,g\,:\,\R^n\longrightarrow\R_{\geq0}$, $p\in\R\cup\{\pm\infty\}$ and $\lambda\in[0,1]$, we define
\begin{equation}\label{e:p-asplund}
(1-\lambda)f\star_p\lambda g \,(x)=
\sup_{(1-\lambda)x_1+\lambda x_2=x}M_p(f(x_1),g(x_2),\lambda).
\end{equation}
In this way, the Asplund sum is obtained for $p=0$. Note further that
\begin{equation}\label{e:inclusion-p-asplund}
(1-\lambda)f\star_p\lambda g\leq(1-\lambda)f\star_q\lambda g
\end{equation}
if $p\leq q$.

The following theorem contains the Borell-Brascamp-Lieb inequality (see
\cite{BL}, \cite{Borell} and also \cite{G} for a detailed presentation).
\begin{theorem}[Borell-Brascamp-Lieb inequality]\label{t:BBL}
Let $\lambda\in(0,1)$, $-1/n\leq p\leq\infty$ and let $f,g:\R^n\longrightarrow\R_{\geq0}$ be non-negative measurable
functions such that $(1-\lambda) f\star_p\lambda g$ is measurable as well. Then
\begin{equation}\label{e:BBL}
\int_{\R^n}(1-\lambda) f\star_p\lambda g\,\dlat x\geq
M_{p/(np+1)}\left(\int_{\R^n}f\,\dlat x,\,\int_{\R^n}g\,\dlat x,\,\lambda\right).
\end{equation}
\end{theorem}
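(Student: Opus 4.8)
The plan is to prove Theorem~\ref{t:BBL} by induction on the dimension $n$, following the classical scheme; note that the case $p=0$ is precisely the Pr\'ekopa--Leindler inequality (Theorem~\ref{t:PrekopaLeindler}) and can serve both as a template and as an available tool. Before starting the induction I would dispose of the degenerate situations: if one of $\int_{\R^n}f\,\dlat x$, $\int_{\R^n}g\,\dlat x$ vanishes then the right-hand side is $0$ and the inequality is trivial, while the endpoint cases $p=\pm\infty$ can be handled by a direct (and easier) argument or recovered as limits via the monotonicity \eqref{e:inclusion-p-asplund} (as $p\to\infty$ one has $p/(np+1)\to 1/n$). Thus I would assume $p\in\R$, $-1/n\le p$, $p\neq0$, and that both integrals are positive and finite.

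For the base case $n=1$, writing $F=\int_\R f\,\dlat x$ and $G=\int_\R g\,\dlat x$, I would introduce the mass parametrizations $x(s),y(s)$ determined by $\int_{-\infty}^{x(s)}f=sF$ and $\int_{-\infty}^{y(s)}g=sG$ for $s\in(0,1)$, so that $f(x(s))x'(s)=F$ and $g(y(s))y'(s)=G$ almost everywhere. Setting $z(s)=(1-\lambda)x(s)+\lambda y(s)$, the map $z$ is nondecreasing and $(1-\lambda)f\star_p\lambda g\,(z(s))\ge M_p(f(x(s)),g(y(s)),\lambda)$ directly from \eqref{e:p-asplund}. A change of variables then yields
\[
\int_\R (1-\lambda)f\star_p\lambda g\,\dlat x \ge \int_0^1 M_p\bigl(f(x(s)),g(y(s)),\lambda\bigr)\Bigl[(1-\lambda)\tfrac{F}{f(x(s))}+\lambda\tfrac{G}{g(y(s))}\Bigr]\,\dlat s,
\]
so everything reduces to the pointwise inequality
\[
M_p(a,b,\lambda)\Bigl[(1-\lambda)\tfrac{F}{a}+\lambda\tfrac{G}{b}\Bigr]\ge M_{p/(p+1)}(F,G,\lambda),
\]
valid for $a,b>0$ whenever $p\ge-1$. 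This estimate is the purely algebraic heart of the one-dimensional case and is a standard property of $p$th means (provable via the Minkowski/H\"older inequality for means, cf.\ \cite{HaLiPo,BL}).

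For the inductive step, assume the result in dimension $n-1$. Writing $x=(x',t)\in\R^{n-1}\times\R$ and $f_t(x')=f(x',t)$, $g_t(x')=g(x',t)$, I would first record the slice inequality: for $u=(1-\lambda)s+\lambda t$,
\[
\bigl((1-\lambda)f\star_p\lambda g\bigr)_u \ge (1-\lambda)f_s\star_p\lambda g_t
\]
as functions on $\R^{n-1}$, again straight from \eqref{e:p-asplund}. Putting $F(s)=\int_{\R^{n-1}}f_s\,\dlat x'$, $G(t)=\int_{\R^{n-1}}g_t\,\dlat x'$ and $H(u)=\int_{\R^{n-1}}\bigl((1-\lambda)f\star_p\lambda g\bigr)_u\,\dlat x'$, the induction hypothesis (applied with the same exponent $p$) gives $H((1-\lambda)s+\lambda t)\ge M_q(F(s),G(t),\lambda)$ with $q=p/((n-1)p+1)$. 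This is exactly the hypothesis of the one-dimensional inequality for $F,G,H$ with exponent $q$; applying the base case and Fubini I would obtain
\[
\int_{\R^n}(1-\lambda)f\star_p\lambda g\,\dlat x = \int_\R H\,\dlat u \ge M_{q/(q+1)}\bigl(\textstyle\int_{\R^n} f\,\dlat x,\int_{\R^n} g\,\dlat x,\lambda\bigr),
\]
and the proof closes with the one-line bookkeeping $q/(q+1)=p/(np+1)$.

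The main obstacle, and the step demanding the most care, is controlling the exponent through the induction: one must verify that $p\ge-1/n$ forces $(n-1)p+1>0$ and $q=p/((n-1)p+1)\ge-1$ (with equality exactly at $p=-1/n$), so that the one-dimensional base case genuinely applies, while simultaneously $p\ge-1/n>-1/(n-1)$ keeps $p$ within the admissible range for the $(n-1)$-dimensional hypothesis. Secondary technical issues are the almost-everywhere differentiability of the mass parametrizations in the base case and the measurability of $H$, $F$, $G$, which follow from Fubini's theorem once $(1-\lambda)f\star_p\lambda g$ is assumed measurable.
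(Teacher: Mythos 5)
The paper itself contains no proof of Theorem \ref{t:BBL}: it is stated as a classical result, with the proof deferred to the references \cite{Borell}, \cite{BL} and \cite{G}. So there is no internal argument to compare yours against; what you propose is, in outline, exactly the classical induction-on-dimension proof found in those sources. Your bookkeeping is correct: the slice inequality $\bigl((1-\lambda)f\star_p\lambda g\bigr)_u\geq(1-\lambda)f_s\star_p\lambda g_t$ does follow from \eqref{e:p-asplund}; the hypothesis $p\geq-1/n$ gives $(n-1)p+1\geq 1/n>0$ and hence $q=p/((n-1)p+1)\geq-1$, with equality exactly at $p=-1/n$; the identity $q/(q+1)=p/(np+1)$ closes the induction; and the algebraic core of the base case, $M_p(a,b,\lambda)\,M_1(F/a,G/b,\lambda)\geq M_{p/(p+1)}(F,G,\lambda)$ for $p\geq-1$, is the standard H\"older-type inequality for $p$th means (cf. \cite{HaLiPo}, \cite{BL}), including the endpoint $p=-1$ where the right-hand side becomes $\min(F,G)$.

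One step does need repair, though by a standard device. In the inductive step you invoke ``the induction hypothesis'' for the pair $f_s,g_t$; but the theorem, as stated, requires measurability of $(1-\lambda)f_s\star_p\lambda g_t$ on $\R^{n-1}$, and this is not available: measurability of $(1-\lambda)f\star_p\lambda g$ on $\R^n$ yields (via Fubini) measurability of \emph{its} slices, not of the sup-convolutions of the slices of $f$ and $g$. The object to induct on is the three-function form of the inequality: if $f,g,h$ are measurable and $h\bigl((1-\lambda)x+\lambda y\bigr)\geq M_p\bigl(f(x),g(y),\lambda\bigr)$ for all $x,y$, then $\int_{\R^n}h\,\dlat x\geq M_{p/(np+1)}\bigl(\int_{\R^n}f\,\dlat x,\int_{\R^n}g\,\dlat x,\lambda\bigr)$. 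Your base-case argument in fact proves exactly this form, since it only uses $h(z(s))\geq M_p\bigl(f(x(s)),g(y(s)),\lambda\bigr)$; in the inductive step one then takes $h_u$ (measurable for a.e.\ $u$) as the third function, and the exceptional null sets of $s$, $t$, $u$ are neutralized by redefining $F,G$ to be $0$ and $H$ to be $\infty$ there, the convention $M_q(a,b,\lambda)=0$ when $ab=0$ making the pointwise hypothesis trivial for bad $s$ or $t$. With that reformulation your argument goes through; as written, the appeal to the theorem itself in dimension $n-1$ rests on a measurability hypothesis you cannot verify.
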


\smallskip

Note that the fact that $M_p(a,b,\lambda)=0$ if $ab=0$ prevents us from obtaining trivial inequalities when $p>0$. Furthermore, the above theorem is a generalization of both the classical Brunn-Minkowski inequality \eqref{BM1} ($p=\infty$ and taking $f$ and $g$ characteristic functions) and Pr\'ekopa-Leindler inequality, Theorem \ref{t:PrekopaLeindler} ($p=0$).

\smallskip

Regarding the functions which are naturally connected to the above theorem, we give the following definition: a non-negative function $f:\R^n\longrightarrow\R_{\geq0}$ is $p$-concave, $p\in\R\cup\{\pm\infty\}$, if
\begin{equation}\label{e:p-concavecondition}
f\bigl((1-\lambda)x_1+\lambda x_2\bigr)\geq M_p\left(f(x_1), f(x_2), \lambda\right)
\end{equation}
for all $x_1,x_2\in\R^n$ and all $\lambda\in(0,1)$.
This definition has the following meaning:
\begin{enumerate}
  \item[(i)] for $p=\infty$, $f$ is $\infty$-concave if and only if $f$ is constant on a convex set and $0$ otherwise;
  \item[(ii)] for $0<p<\infty$, $f$ is $p$-concave if and only if $f^p$ is concave on a convex set and $0$ elsewhere;
  \item[(iii)] for $p=0$, $f$ is $0$-concave if and only if $f$ is log-concave;
  \item[(iv)] for $-\infty<p<0$, $f$ is $p$-concave if and only if $f^p$ is convex;
  \item[(v)] for $p=-\infty$, $f$ is $(-\infty)$-concave if and only if its level sets $\{x\in\R^n:\, f(x)>t\}$ are convex (for all $t\in\R$).
\end{enumerate}
Furthermore, for any $p\in\R\cup\{\pm\infty\}$, $f$ is $p$-concave if and only if
\begin{equation}\label{e:p-concavity-via-p-asplund}
(1-\lambda) f\star_p\lambda f=f.
\end{equation}

\medskip

In the following, for $p\neq0$, we will work with (non-negative) extended measurable functions $f,g:\R^n\longrightarrow\R_{\geq0}\cup\{\infty\}$ for
which we will define the functional operation $\star_p$ as in \eqref{e:p-asplund}. Notice that since $f$ and $g$ can be approximated from below by bounded functions (in such a way that the integrals converge), we are allowed to extend Theorem \ref{t:BBL} for such $f$ and $g$ (functions which may take $\infty$ as a value and provided that $p\neq0$).

In the same way, we will say that a non-negative extended function $f:\R^n\longrightarrow\R_{\geq0}\cup\{\infty\}$ is $p$-concave, $p\neq0$, if and only if the equivalent conditions \eqref{e:p-concavecondition}, \eqref{e:p-concavity-via-p-asplund} hold.

\subsection{BBL inequality under an equal projection assumption}

Let $H\in\L^n_{n-1}$ and let $\nu$ be a normal unit vector of $H$. Given $f,g:\R^n\longrightarrow\R_{\geq0}\cup\{\infty\}$, for $h, h_1, h_2\in H$ such that $(1-\lambda) h_1+\lambda h_2=h$ and any $\alpha_1,\alpha_2\in\R$, we clearly have
\begin{equation*}
\begin{split}
&\,\,\proj_H((1-\lambda) f\star_{-\infty}\lambda g)(h)\\
&\geq\bigl((1-\lambda) f\star_{-\infty}\lambda g\bigr)\bigl((1-\lambda) (h_1+\alpha_1\nu)+\lambda(h_2+\alpha_2\nu)\bigr)\\
&\geq \min \bigl(f(h_1+\alpha_1\nu), \, g(h_2+\alpha_2\nu)\bigr).
\end{split}
\end{equation*}
Thus, by taking suprema over $\alpha_1,\alpha_2\in\R$,
\begin{equation*}
\proj_H((1-\lambda) f\star_{-\infty}\lambda g)(h)\geq \min \bigl(\proj_H(f)(h_1),\, \proj_H(f)(h_2)\bigr)
\end{equation*}
for all $h_1,h_2\in H$ with $(1-\lambda) h_1+\lambda h_2=h$. This implies that
\begin{equation*}
\proj_H((1-\lambda) f\star_{-\infty}\lambda g)\geq (1-\lambda)\,\proj_H(f)\star_{-\infty}\lambda\,\proj_H(g).
\end{equation*}
In particular, if $\proj_H(f)=\proj_H(g)$ and we set $U:H\longrightarrow\R_{\geq0}\cup\{\infty\}$ given by $U(h)=\proj_H(f)(h)=\proj_H(g)(h)$ then
\begin{equation}\label{e:des_proy_asplund_-infty}
\proj_H((1-\lambda) f\star_{-\infty}\lambda g)\geq U.
\end{equation}

On the other hand, it is clear that
\begin{equation}\label{e:f,g_acotadosII}
\bigl\{x\in h+H^\perp:\,f(x)\geq t\bigr\},\,\bigl\{y\in h+H^\perp:\,g(y)\geq
t\bigr\}\neq\emptyset,
\end{equation}
for all $0\leq t<U(h)$ and, by means of the definition of $\star_{-\infty}$ together with \eqref{e:f,g_acotadosII} we have
\begin{equation}\label{e:levelsets-infty}
\begin{split}
&\bigl\{z\in h+H^\perp\,:((1-\lambda) f\star_{-\infty}\lambda g)(z)\geq t\bigr\}\\
&\supset(1-\lambda)\bigl\{x\in h+H^\perp\,:f(x)\geq
t\bigr\}+\lambda\bigl\{y\in h+H^\perp\,:g(y)\geq t\bigr\}
\end{split}
\end{equation}
for all $0\leq t<U(h)$. Now, using the same approach as in the proof of Theorem \ref{t:PL-commonproj}, and taking into account \eqref{e:des_proy_asplund_-infty} and \eqref{e:levelsets-infty}, together with \eqref{e:inclusion-p-asplund}, we may assert:

\begin{teor}\label{t:BBL-commonproj} Let $f,g:\R^n\longrightarrow\R_{\geq0}\cup\{\infty\}$ be non-negative measurable functions
such that $(1-\lambda) f\star_p\lambda g$ is measurable for $p\in\R\cup\{\pm\infty\}$ and $\lambda\in(0,1)$ fixed.
If there exists
$H\in\L^n_{n-1}$ such that
\begin{equation*}
\proj_H(f)=\proj_H(g)
\end{equation*}
then
\begin{equation*}
   \int_{\R^n}(1-\lambda) f\star_p\lambda g\,\dlat x \geq (1-\lambda)\int_{\R^n}f\,\dlat x +\lambda\int_{\R^n}g\,\dlat x.
\end{equation*}
\end{teor}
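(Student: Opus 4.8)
The plan is to run the layer-cake argument from the proof of Theorem \ref{t:PL-commonproj} directly on the function $(1-\lambda)f\star_p\lambda g$, which is measurable by hypothesis, feeding in two structural facts about its fibres. The point is that, although those facts were recorded before the statement only for the minimum operation $\star_{-\infty}$---namely the projection bound \eqref{e:des_proy_asplund_-infty} and the level-set inclusion \eqref{e:levelsets-infty}---the monotonicity \eqref{e:inclusion-p-asplund} transfers them verbatim to $\star_p$.

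Concretely, set $U=\proj_H(f)=\proj_H(g)$. Since $(1-\lambda)f\star_{-\infty}\lambda g\le(1-\lambda)f\star_p\lambda g$ by \eqref{e:inclusion-p-asplund}, applying the (monotone) projection $\proj_H$ and invoking \eqref{e:des_proy_asplund_-infty} gives $\proj_H\bigl((1-\lambda)f\star_p\lambda g\bigr)\ge U$; the same monotonicity combined with \eqref{e:levelsets-infty} yields, for every $h\in H$ and every $0\le t<U(h)$,
\[
\bigl\{z\in h+H^\perp:((1-\lambda)f\star_p\lambda g)(z)\ge t\bigr\}\supset(1-\lambda)\bigl\{x\in h+H^\perp:f(x)\ge t\bigr\}+\lambda\bigl\{y\in h+H^\perp:g(y)\ge t\bigr\},
\]
the two sets on the right being nonempty by \eqref{e:f,g_acotadosII}.

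With these two inputs in hand I would slice $\R^n$ along $H$ and $H^\perp$, apply Fubini together with the one-dimensional layer-cake formula on each fibre $h+H^\perp$, and then truncate the height integral at $U(h)$ using the projection bound (this only decreases the value, as $\proj_H$ of the $\star_p$-sum is at least $U$). On the range $0\le t<U(h)$ the inclusion above, followed by the one-dimensional Brunn-Minkowski inequality \eqref{BM1}, bounds the fibre length of the $\star_p$-sum below by $(1-\lambda)$ times that of $f$ plus $\lambda$ times that of $g$. Since $U(h)=\proj_H(f)(h)$ is exactly the threshold above which $\bigl\{x\in h+H^\perp:f(x)\ge t\bigr\}$ becomes empty, integrating in $t$ over $[0,U(h))$ recovers $\int_{h+H^\perp}f\,\dlat x$, and likewise $U(h)=\proj_H(g)(h)$ recovers $\int_{h+H^\perp}g\,\dlat x$; reassembling by Fubini gives the claim.

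I expect the only genuine subtlety---absent in the finite-valued setting of Theorem \ref{t:PL-commonproj}---to be that $f$ and $g$ are now extended-valued and $U(h)$ may equal $+\infty$. This should be harmless: all integrands are non-negative, so Tonelli applies with no integrability precautions, the layer-cake identity persists with $\int_0^{\infty}$ in place of $\int_0^{U(h)}$ (the integrand vanishing for $t>U(h)$), and both sides of the inequality are simply permitted to be $+\infty$. The main work is therefore organizational: verifying that \eqref{e:inclusion-p-asplund} correctly imports the $\star_{-\infty}$ level-set geometry into the measurable $\star_p$-sum on which the layer-cake and Brunn-Minkowski computation is carried out.
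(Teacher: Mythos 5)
Your proposal is correct and takes essentially the same route as the paper: the paper's own (sketched) proof likewise transfers the $\star_{-\infty}$ projection bound \eqref{e:des_proy_asplund_-infty} and level-set inclusion \eqref{e:levelsets-infty} to the measurable $\star_p$-sum via the monotonicity \eqref{e:inclusion-p-asplund}, and then repeats the Fubini/layer-cake and one-dimensional Brunn--Minkowski argument of Theorem \ref{t:PL-commonproj}. Your handling of the extended-valued case (non-negativity, Tonelli, truncation at $U(h)$ possibly infinite) fills in exactly what the paper leaves implicit.
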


\smallskip
In the one-dimensional case this theorem was proved by Brascamp and Lieb (see \cite[Theorem~3.1]{BL}).
We notice that Theorem \ref{t:PL-commonproj} is obtained when $p=0$.
Note further that since $p/(np+1)\in[-\infty, 1/n]$, the above inequality is stronger than \eqref{e:BBL} (cf. \eqref{e:inclusion-p-asplund}).

\subsection{BBL inequality under the same integral of a projection}

Given $f:\R^n\longrightarrow\R_{\geq0}\cup\{\infty\}$ measurable and $-\infty<p<0$, it will be convenient to write it in the form $f=u^{1/p}$ where
$u:\R^n\longrightarrow\R_{\geq0}\cup\{\infty\}$ is a measurable function (i.e., $u(x)= f(x)^p$ with the
conventions that $0^p=\infty$ and $\infty^p=0$).
Given an $H\in\L^n_{n-1}$, we may write the Steiner symmetral of $f=u^{1/p}$ in the form
$\Stsym{H}(f)=u_H^{1/p}$ (i.e., $u_H:\R^n\longrightarrow\R_{\geq0}\cup\{\infty\}$ is the function given by $u_H(x)= (\Stsym{H}(f)(x))^p$). Notice further
that, as $t\mapsto t^{1/p}$ is a decreasing bijection on $\R_{\geq0}\cup\{\infty\}$, and $\sthyp(\Stsym{H}(f))=\Stsym{\widetilde{H}}\bigl(\sthyp(f)\bigr)$, we also have
\begin{equation*}
\stepi(u_H)=\Stsym{\widetilde{H}}\bigl(\stepi(u)\bigr).
\end{equation*}

Now, writing $f=u^{1/p}$ and $g=v^{1/p}$, it is easy to check that
\begin{equation*}
(1-\lambda) f\star_p\lambda g=w^{1/p}
\end{equation*}
where $w=(1-\lambda)u\oplus\lambda v$.
Therefore $\Stsym{H}(f)=u_H^{1/p}$, $\Stsym{H}(g)=v_H^{1/p}$ whereas $\Stsym{H}((1-\lambda)f\star_p\lambda g)=w_H^{1/p}$ and, without loss of generality, we may also assume that
both $f$ and $g$ are not identically zero (which implies that $\stepi(u_H)$, $\stepi(v_H)\neq\emptyset$).
Thus, using a similar argument to that at the end of Proposition \ref{p:desigsim_1}, we have
\begin{equation*}
\stepi(w_H)=\Stsym{\widetilde{H}}(\stepi(w))\supset\stepi((1-\lambda) u_H\oplus\lambda  v_H).
\end{equation*}
This is equivalent to $w_H\leq(1-\lambda)u_H\oplus\lambda v_H$ and hence
\begin{equation*}
\Stsym{H}((1-\lambda) f\star_p\lambda g)=w_H^{1/p}\geq \bigl((1-\lambda)u_H\oplus\lambda v_H\bigr)^{1/p}
=(1-\lambda) \Stsym{H}(f)\star_p\lambda \Stsym{H}(g).
\end{equation*}

Furthermore, notice that for $p=-\infty$, we have
\begin{equation*}
\begin{split}
&\bigl\{z\in \R^n\,: \Stsym{H}((1-\lambda) f\star_{-\infty}\lambda g)(z)> t\bigr\}\\
&=\Stsym{H}\left(\bigl\{z\in\R^n\,: ((1-\lambda) f\star_{-\infty}\lambda g)(z)> t\bigr\}\right)\\
&\supset(1-\lambda)\Stsym{H}\left(\bigl\{x\in\R^n\,:f(x)>t\bigr\}\right)
+\lambda\Stsym{H}\left(\bigl\{y\in\R^n\,:g(y)>t\bigr\}\right)\\
&=(1-\lambda)\bigl\{x\in\R^n\,:\Stsym{H}(f)(x)>t\bigr\}
+\lambda\bigl\{y\in\R^n\,:\Stsym{H}(g)(y)> t\bigr\}\\
&=\bigl\{z\in \R^n\,: ((1-\lambda)  \Stsym{H}(f)\star_{-\infty}\lambda  \Stsym{H}(g))(z)> t\bigr\},
\end{split}
\end{equation*}
and thus $\Stsym{H}((1-\lambda) f\star_{-\infty}\lambda g)\geq(1-\lambda) \Stsym{H}(f)\star_{-\infty}\lambda \Stsym{H}(g)$.

\medskip

On the other hand,
given $\alpha_1,\alpha_2\in\R$ and $\alpha=(1-\lambda)\alpha_1+\lambda\alpha_2$,
and by means of the Borell-Brascamp-Lieb inequality (Theorem \ref{t:BBL}), we  have
\begin{equation*}
\begin{split}
&\int_{\alpha \nu+H}\bigl((1-\lambda) f\star_p\lambda g\bigr)_{|_{\left(\alpha \nu+H\right)}}\,\dlat x\\
&\geq M_{p/(np+1)}\left(\int_{\alpha_1 \nu+H} f_{|_{\left(\alpha_1 \nu+H\right)}}\,\dlat x,\,\int_{\alpha_2 \nu+H} g_{|_{\left(\alpha_2 \nu+H\right)}}\,\dlat x,\, \lambda \right),
\end{split}
\end{equation*}
for $-1/n\leq p<0$. This ensures that, for $h_1,h_2\in H$,
\begin{equation*}
\begin{split}
&\Stsym{H^\perp}\bigl((1-\lambda) f\star_p\lambda g\bigr)\bigl(((1-\lambda)h_1+\lambda h_2)+((1-\lambda)\alpha_1+\lambda\alpha_2)\nu\bigr)\\
&\geq M_{p/(np+1)}\bigl(\Stsym{H^\perp}(f)(h_1+\alpha_1\nu),\,\Stsym{H^\perp}(g)(h_2+\alpha_2\nu),\,\lambda\bigr).
\end{split}
\end{equation*}

\smallskip

Therefore, we have shown the following result:
\begin{prop}\label{p:BBL-inclu-sym}
Let $f,g:\R^n\longrightarrow\R_{\geq0}\cup\{\infty\}$ be non-negative measurable functions such that $(1-\lambda) f\star_p\lambda g$ is measurable for $\lambda\in(0,1)$ fixed and $-\infty\leq p<0$. Then, given
$H\in\L^n_{n-1}$,
\begin{equation}\label{e:incSteiner-pAsplund}
\Stsym{H}((1-\lambda) f\star_p\lambda g)\geq(1-\lambda)\Stsym{H}(f)\star_p\lambda\Stsym{H}(g).
\end{equation}
Moreover, if $-1/n\leq p<0$,
\begin{equation}\label{e:incSymII-pAsplund}
\Stsym{H^\perp}((1-\lambda) f\star_p\lambda g)\geq(1-\lambda)\Stsym{H^\perp}(f)\star_{q}\lambda\Stsym{H^\perp}(g),
\end{equation}
where $q=p/(np+1)$.
\end{prop}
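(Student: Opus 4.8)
The plan is to prove the two displayed inequalities separately, since \eqref{e:incSteiner-pAsplund} is the $\star_p$-analogue of Proposition \ref{p:desigsim_1} and \eqref{e:incSymII-pAsplund} is the $\star_p$-analogue of Proposition \ref{p:desigsim_2}. The unifying device for $-\infty<p<0$ is the order-reversing bijection $t\mapsto t^{1/p}$ of $\R_{\geq0}\cup\{\infty\}$: writing $f=u^{1/p}$ and $g=v^{1/p}$ turns the $p$-Asplund sum into a scaled infimal convolution of $u$ and $v$, and turns the Steiner symmetral of $f$ into the Steiner symmetrization of the strict epigraph of $u$ (as recorded above through $\stepi(u_H)=\Stsym{\widetilde{H}}(\stepi(u))$). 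This is exactly what allows me to import the purely set-theoretic inclusion of Proposition \ref{p:InclStSymm}.

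For \eqref{e:incSteiner-pAsplund} with $-\infty<p<0$, I would first record $(1-\lambda)f\star_p\lambda g=w^{1/p}$ with $w=(1-\lambda)u\oplus\lambda v$ (the infimum appears because $1/p<0$ turns the supremum defining $\star_p$ into an infimum). Combining the epigraph identity \eqref{e:infconvepi} for the infimal convolution with Proposition \ref{p:InclStSymm} applied to $\stepi(u),\stepi(v)\subset\widetilde{H}$ gives
\begin{equation*}
\stepi(w_H)=\Stsym{\widetilde{H}}\bigl((1-\lambda)\stepi(u)+\lambda\stepi(v)\bigr)\supset(1-\lambda)\stepi(u_H)+\lambda\stepi(v_H)=\stepi\bigl((1-\lambda)u_H\oplus\lambda v_H\bigr),
\end{equation*}
that is $w_H\leq(1-\lambda)u_H\oplus\lambda v_H$; applying the order-reversing map $t\mapsto t^{1/p}$ flips this into \eqref{e:incSteiner-pAsplund}. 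For the boundary value $p=-\infty$ the substitution degenerates, so I would argue directly on superlevel sets: from $\{(1-\lambda)f\star_{-\infty}\lambda g>t\}=(1-\lambda)\{f>t\}+\lambda\{g>t\}$, Proposition \ref{p:InclStSymm}, and the commutation of $\Stsym{H}(\cdot)$ with superlevel sets, one obtains for every $t$ an inclusion between the superlevel sets of the two sides of \eqref{e:incSteiner-pAsplund}, hence the inequality.

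For \eqref{e:incSymII-pAsplund} with $-1/n\leq p<0$ I would work one section at a time. Fixing $\alpha_1,\alpha_2$ and putting $\alpha=(1-\lambda)\alpha_1+\lambda\alpha_2$, taking suprema along $H$ shows that the restriction of $(1-\lambda)f\star_p\lambda g$ to $\alpha\nu+H$ dominates the $(n-1)$-dimensional sum $(1-\lambda)f_{|_{(\alpha_1\nu+H)}}\star_p\lambda g_{|_{(\alpha_2\nu+H)}}$. Integrating over the $(n-1)$-dimensional section and applying the Borell-Brascamp-Lieb inequality (Theorem \ref{t:BBL}) in dimension $n-1$ — legitimate because $-1/n\geq-1/(n-1)$, so $p\geq-1/(n-1)$ — bounds the section integral from below by $M_{p/((n-1)p+1)}$ of the integrals of $f$ and $g$ on their sections. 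Since $p/((n-1)p+1)\geq q=p/(np+1)$ on this range, the monotonicity \eqref{e:inclusion-p-asplund} of $\star_p$ in the index permits replacing this mean by $M_q$; recalling that $\Stsym{H^\perp}(\cdot)$ places a ball carrying the corresponding section integral at each height, taking suprema over admissible $\alpha_1,\alpha_2$ yields the pointwise inequality \eqref{e:incSymII-pAsplund}.

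The set inclusion itself is routine and already in hand; the delicate bookkeeping is twofold. First, one must keep every inequality direction correct through the order-reversing map $t\mapsto t^{1/p}$, so that an inclusion of epigraphs becomes the \emph{correct} pointwise inequality. Second, and this is the step I expect to be subtlest to phrase cleanly, applied literally to the $(n-1)$-dimensional sections the Borell-Brascamp-Lieb inequality produces the index $p/((n-1)p+1)$ rather than the target $q=p/(np+1)$; the two are reconciled through the monotonicity of $p$th means in their index together with \eqref{e:inclusion-p-asplund}. This is most delicate at the endpoint $p=-1/n$, where $np+1=0$ and $q=-\infty$, so that \eqref{e:incSymII-pAsplund} there degenerates to the $\star_{-\infty}$ comparison.
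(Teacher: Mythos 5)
Your proposal is correct and takes essentially the same route as the paper: for \eqref{e:incSteiner-pAsplund} the substitution $f=u^{1/p}$, $g=v^{1/p}$ reducing $\star_p$ to an infimal convolution, the epigraph identity \eqref{e:infconvepi} and the set-theoretic inclusion of Proposition \ref{p:InclStSymm} (with the superlevel-set argument at $p=-\infty$), and for \eqref{e:incSymII-pAsplund} a section-wise application of the Borell-Brascamp-Lieb inequality followed by taking suprema over decompositions. If anything, you are slightly more explicit than the paper on one point: the paper cites Theorem \ref{t:BBL} and writes the index $p/(np+1)$ directly for the $(n-1)$-dimensional sections, whereas you correctly observe that BBL on sections yields the index $p/((n-1)p+1)$ (legitimate since $p\geq-1/n\geq-1/(n-1)$) and then pass to $q=p/(np+1)$ by monotonicity of $p$th means in the index.
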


Now, as in Theorem \ref{t:PLcommon_int_pro}, we extend the above theorem for the case of two functions with the same integral of a projection onto a hyperplane.

\begin{teor}\label{t:BBLcommon_int_pro}
Let $-1/n\leq p<0$ and let $f,g:\R^n\longrightarrow\R_{\geq0}\cup\{\infty\}$ be non-negative measurable functions such that
$(1-\lambda) f\star_p\lambda g$ is measurable for $\lambda\in(0,1)$ fixed.
If there exists $H\in\L^n_{n-1}$ such that
\[
(1-\lambda)\Stsym{H}(f)\star_p\lambda\Stsym{H}(g), \quad
(1-\lambda)\Stsym{H^\perp}\bigl(\Stsym{H}(f)\bigr)\star_{-\infty}\lambda\Stsym{H^\perp}\bigl(\Stsym{H}(g)\bigr)
\]
are measurable functions and
\begin{equation}\label{e:BBL-hyp_intprojcom}
 \int_H \proj_H(f)(x)\,\dlat x=\int_H \proj_H(g)(x)\,\dlat x
\end{equation}
then
\begin{equation*}
   \int_{\R^n}(1-\lambda) f\star_p\lambda g\,\dlat x
   \geq(1-\lambda)\int_{\R^n}f\,\dlat x +\lambda\int_{\R^n}g\,\dlat x.
\end{equation*}
\end{teor}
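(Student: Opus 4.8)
The plan is to mimic the proof of Theorem~\ref{t:PLcommon_int_pro}, replacing the Asplund sum by the $p$th-mean operation $\star_p$ and using the symmetrization estimates of Proposition~\ref{p:BBL-inclu-sym} in place of Propositions~\ref{p:desigsim_1} and \ref{p:desigsim_2}. The backbone is a double symmetrization: first the Steiner symmetral $\Stsym{H}(\cdot)$, then the Schwarz-type symmetrization $\Stsym{H^\perp}(\cdot)$. Since the Steiner symmetral is just the symmetrization of the hypograph, Proposition~\ref{p:propStsym} applies regardless of the representation used for $p\neq0$; in particular both symmetrizations preserve the integral over $\R^n$ (Proposition~\ref{p:propStsym}~(ii) and \eqref{e:Sym2presvol}), so that
\[
\int_{\R^n}(1-\lambda) f\star_p\lambda g\,\dlat x=\int_{\R^n}\Stsym{H^\perp}\bigl(\Stsym{H}((1-\lambda) f\star_p\lambda g)\bigr)\,\dlat x .
\]
The entire argument then reduces to bounding this integrand from below and invoking the equal-projection result, Theorem~\ref{t:BBL-commonproj}.

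Set $F=\Stsym{H^\perp}\bigl(\Stsym{H}(f)\bigr)$ and $G=\Stsym{H^\perp}\bigl(\Stsym{H}(g)\bigr)$. First I would bound the inner symmetral: by \eqref{e:incSteiner-pAsplund} (valid for $-\infty\le p<0$) together with the monotonicity of $\Stsym{H^\perp}(\cdot)$,
\[
\Stsym{H^\perp}\bigl(\Stsym{H}((1-\lambda) f\star_p\lambda g)\bigr)\geq \Stsym{H^\perp}\bigl((1-\lambda)\Stsym{H}(f)\star_p\lambda\Stsym{H}(g)\bigr).
\]
Applying \eqref{e:incSymII-pAsplund} to the pair $\Stsym{H}(f),\Stsym{H}(g)$ (here the hypothesis $-1/n\le p<0$ and the first measurability assumption are used) bounds the right-hand side below by $(1-\lambda)F\star_{q}\lambda G$ with $q=p/(np+1)$. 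Since $-\infty\le q$, the monotonicity \eqref{e:inclusion-p-asplund} gives $(1-\lambda)F\star_q\lambda G\geq(1-\lambda)F\star_{-\infty}\lambda G$. Chaining these estimates and integrating yields
\[
\int_{\R^n}(1-\lambda) f\star_p\lambda g\,\dlat x\geq\int_{\R^n}(1-\lambda)F\star_{-\infty}\lambda G\,\dlat x ,
\]
and the combination on the right is measurable precisely by the second assumption of the theorem; this is exactly why that measurability hypothesis is phrased with $\star_{-\infty}$.

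It then remains to apply Theorem~\ref{t:BBL-commonproj} with parameter $-\infty$ to $F,G$, giving $\int_{\R^n}(1-\lambda)F\star_{-\infty}\lambda G\,\dlat x\geq(1-\lambda)\int_{\R^n}F\,\dlat x+\lambda\int_{\R^n}G\,\dlat x$; since $\Stsym{H}$ and $\Stsym{H^\perp}$ preserve the integral, the right-hand side equals $(1-\lambda)\int_{\R^n}f\,\dlat x+\lambda\int_{\R^n}g\,\dlat x$, which closes the argument. The step I expect to be the crux is verifying the hypothesis $\proj_H(F)=\proj_H(G)$ of Theorem~\ref{t:BBL-commonproj}. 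By construction $\Stsym{H^\perp}\bigl(\Stsym{H}(f)\bigr)(h+\alpha\nu)=d_\alpha\chi_{_{B_{n-1}}}(h)$ with $d_\alpha=\int_{\alpha\nu+H}\Stsym{H}(f)_{|_{\left(\alpha \nu+H\right)}}\,\dlat x$, so $\proj_H(F)=\bigl(\sup_\alpha d_\alpha\bigr)\chi_{_{B_{n-1}}}$. The key observation is that after Steiner symmetrization every perpendicular fibre of $\Stsym{H}(f)$ is an even, nonincreasing function of $\alpha$, hence $\alpha\mapsto d_\alpha$ is symmetric and nonincreasing in $|\alpha|$, so its supremum is attained at $\alpha=0$ and equals $\int_H\Stsym{H}(f)_{|_{H}}\,\dlat x=\int_H\proj_H(\Stsym{H}(f))\,\dlat x=\int_H\proj_H(f)\,\dlat x$ by Proposition~\ref{p:propStsym}~(iii). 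The same computation holds for $g$, and hypothesis \eqref{e:BBL-hyp_intprojcom} forces these two suprema to coincide, whence $\proj_H(F)=\proj_H(G)$.

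The remaining points are routine measurability bookkeeping: each symmetral of a measurable function is measurable (Proposition~\ref{p:propStsym}~(i) and Fubini for $\Stsym{H^\perp}$), $\Stsym{H}\bigl((1-\lambda)f\star_p\lambda g\bigr)$ is measurable because $(1-\lambda)f\star_p\lambda g$ is, and the two measurability assumptions of the theorem supply exactly the combinations that are fed into Proposition~\ref{p:BBL-inclu-sym} and into Theorem~\ref{t:BBL-commonproj}. With these in hand the displayed chain is legitimate and the proof is complete.
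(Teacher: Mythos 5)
Your proposal is correct and follows essentially the same route as the paper: double symmetrization by $\Stsym{H}(\cdot)$ then $\Stsym{H^\perp}(\cdot)$, the chain of pointwise inequalities from Proposition \ref{p:BBL-inclu-sym} together with the monotonicity of $\Stsym{H^\perp}(\cdot)$ and \eqref{e:inclusion-p-asplund}, and finally the equal-projection result (Theorem \ref{t:BBL-commonproj} with $\star_{-\infty}$) plus integral preservation. Your explicit verification that $\proj_H\bigl(\Stsym{H^\perp}(\Stsym{H}(f))\bigr)=\proj_H\bigl(\Stsym{H^\perp}(\Stsym{H}(g))\bigr)$ via $\sup_\alpha d_\alpha=d_0=\int_H\proj_H(f)\,\dlat x$ (Proposition \ref{p:propStsym} (iii)) correctly fills in a detail the paper leaves implicit when it defers to the steps of Theorem \ref{t:PLcommon_int_pro}.
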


\begin{proof}
Hypothesis \eqref{e:BBL-hyp_intprojcom} together with the definition of $\Stsym{H}$, $\Stsym{H^\perp}$ implies
\begin{equation*}
\proj_H\Bigl(\Stsym{H^\perp}\bigl(\Stsym{H}(f)\bigr)\Bigr)
=\proj_H\Bigl(\Stsym{H^\perp}\bigl(\Stsym{H}(g)\bigr)\Bigr).
\end{equation*}

On the other hand, Proposition \ref{p:BBL-inclu-sym} (together with the monotonicity of $\Stsym{H^\perp}(\cdot)$ and \eqref{e:inclusion-p-asplund}) implies
\begin{equation*}\label{e:BBLineqinvolvingsym}
\begin{split}
\Stsym{H^\perp}\bigl(\Stsym{H}((1-\lambda) f\star_p\lambda g)\bigr)
&\geq\Stsym{H^\perp}\bigl((1-\lambda)\Stsym{H}(f)\star_p\lambda\,\Stsym{H}(g)\bigr)\\
&\geq(1-\lambda)\,\Stsym{H^\perp}\bigl(\Stsym{H}(f)\bigr)\star_q\lambda
\,\Stsym{H^\perp}\bigl(\Stsym{H}(g)\bigr)\\
&\geq(1-\lambda)\,\Stsym{H^\perp}\bigl(\Stsym{H}(f)\bigr)\star_{-\infty}\lambda
\,\Stsym{H^\perp}\bigl(\Stsym{H}(g)\bigr),\\
\end{split}
\end{equation*}
where $q=p/(np+1)$.
The proof is now concluded by following similar steps to Theorem \ref{t:PLcommon_int_pro}.
\end{proof}

\begin{remark*}
Notice that if in the above theorem $f,g:\R^n\longrightarrow\R_{\geq0}\cup\{\infty\}$ are such that $(1-\lambda) f\star_{p^\prime}\lambda g$ is measurable for $\lambda\in(0,1)$ and $p^\prime\geq0$, then (cf. \eqref{e:inclusion-p-asplund})
\begin{equation*}
   \int_{\R^n}(1-\lambda) f\star_{p^\prime}\lambda g\,\dlat x
   \geq(1-\lambda)\int_{\R^n}f\,\dlat x +\lambda\int_{\R^n}g\,\dlat x.
\end{equation*}
\end{remark*}

\smallskip

\begin{proof}[Proof of Theorem \ref{t:BBLcommon_int_pro_p-concave}]
Without loss of generality we may assume that $p<0$ (cf. \eqref{e:inclusion-p-asplund}).
It is an easy exercise to check that if a function $\phi$ is $p$-concave then $\Stsym{H}(\phi)$ and $\Stsym{H^\perp}(\phi)$ are, respectively, $p$-concave and $q$-concave ($q=p/(np+1)$) functions
(in fact, this may be quickly obtained from \eqref{e:p-concavity-via-p-asplund}, \eqref{e:incSteiner-pAsplund} and \eqref{e:incSymII-pAsplund}).

On the other hand, since $q$-concave functions are also $(-\infty)$-concave and the $t$-Asplund
sum, $\star_t$, preserves $t$-concavity, we may assert that
\[
(1-\lambda)\Stsym{H}(f)\star_p\lambda\Stsym{H}(g), \quad
(1-\lambda)\Stsym{H^\perp}\bigl(\Stsym{H}(f)\bigr)\star_{-\infty}\lambda\Stsym{H^\perp}\bigl(\Stsym{H}(g)\bigr)
\]
are measurable functions. The proof is concluded by applying Theorem \ref{t:BBLcommon_int_pro}.
\end{proof}

To end this paper, we establish here the analogous result to Theorem \ref{t:PL-comvolsect}; it can be shown following the steps of the proof of the above-mentioned theorem.

\begin{teor}\label{t:BBL-comvolsect} Let $-1/n\leq p<0$ and let $f,g:\R^n\longrightarrow\R_{\geq0}\cup\{\infty\}$ be non-negative measurable functions such that $(1-\lambda) f\star_p\lambda g$ is measurable for $\lambda\in(0,1)$ fixed. If there exists $H\in\L^n_{n-1}$ such that
\begin{equation*}\label{e:BBL-comvolsect}
\sup_{y\in H^{\bot}}\,\int_{y+H} f_{|_{\left(y+H\right)}}\,\dlat x=
\sup_{y\in H^{\bot}}\,\int_{y+H} g_{|_{\left(y+H\right)}}\,\dlat x,
\end{equation*}
and
$(1-\lambda)\Stsym{H^\perp}(f)\star_{-\infty}\lambda\Stsym{H^\perp}(g)$
is a measurable function, then
\begin{equation*}
   \int_{\R^n}(1-\lambda) f\star_p\lambda g\,\dlat x \geq (1-\lambda)\int_{\R^n}f\,\dlat x +\lambda\int_{\R^n}g\,\dlat x.
\end{equation*}
\end{teor}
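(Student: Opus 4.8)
The plan is to mirror the proof of Theorem~\ref{t:PL-comvolsect}, replacing the Asplund sum $\star$ by its $p$th-mean counterpart $\star_p$ and invoking the Borell--Brascamp--Lieb machinery of Proposition~\ref{p:BBL-inclu-sym} in place of Proposition~\ref{p:desigsim_2}. The underlying idea is that a single application of the $H^\perp$-symmetrization converts the common maximal-section hypothesis into a common-projection hypothesis, at which point Theorem~\ref{t:BBL-commonproj} closes the argument.

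First I would translate the section hypothesis into a statement about the projections of the symmetrized functions. By the very definition of $\Stsym{H^\perp}$ one has $\Stsym{H^\perp}(f)(h+\alpha\nu)=d_\alpha\chi_{_{B_{n-1}}}(h)$ with $d_\alpha=\int_{\alpha\nu+H} f_{|_{(\alpha\nu+H)}}\,\dlat x$, and since $\alpha\nu$ ranges over all of $H^\perp$ as $\alpha$ ranges over $\R$, taking the supremum over $\alpha$ yields
\[
\proj_H\bigl(\Stsym{H^\perp}(f)\bigr)=\Bigl(\sup_{y\in H^\perp}\int_{y+H} f_{|_{(y+H)}}\,\dlat x\Bigr)\chi_{_{B_{n-1}}},
\]
and likewise for $g$. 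Hence the assumed equality of the maximal section integrals forces
\[
\proj_H\bigl(\Stsym{H^\perp}(f)\bigr)=\proj_H\bigl(\Stsym{H^\perp}(g)\bigr),
\]
which is exactly the common-projection condition required to feed the two symmetrals into Theorem~\ref{t:BBL-commonproj}.

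Next I would chain the estimates. Using \eqref{e:Sym2presvol} to pass to symmetrals without changing integrals, then the second assertion \eqref{e:incSymII-pAsplund} of Proposition~\ref{p:BBL-inclu-sym} (available precisely because $-1/n\le p<0$) together with the monotonicity \eqref{e:inclusion-p-asplund} of $\star_q$ in its parameter to drop from $q=p/(np+1)$ down to $-\infty$, I would get
\[
\int_{\R^n}(1-\lambda) f\star_p\lambda g\,\dlat x=\int_{\R^n}\Stsym{H^\perp}\bigl((1-\lambda) f\star_p\lambda g\bigr)\,\dlat x\ge\int_{\R^n}(1-\lambda)\Stsym{H^\perp}(f)\star_{-\infty}\lambda\Stsym{H^\perp}(g)\,\dlat x.
\]
Applying Theorem~\ref{t:BBL-commonproj} with $p=-\infty$ to $\Stsym{H^\perp}(f)$ and $\Stsym{H^\perp}(g)$ (whose $\star_{-\infty}$-combination is measurable by hypothesis and whose projections agree by the previous step) bounds the right-hand side below by $(1-\lambda)\int_{\R^n}\Stsym{H^\perp}(f)\,\dlat x+\lambda\int_{\R^n}\Stsym{H^\perp}(g)\,\dlat x$, which equals $(1-\lambda)\int_{\R^n}f\,\dlat x+\lambda\int_{\R^n}g\,\dlat x$ again by \eqref{e:Sym2presvol}.

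I expect the only delicate point to be the bookkeeping around the parameter shift $q\mapsto-\infty$: Proposition~\ref{p:BBL-inclu-sym} delivers the symmetrization inequality for $\star_q$, whereas the measurability hypothesis in the statement (and hence the input to Theorem~\ref{t:BBL-commonproj}) is phrased for the $\star_{-\infty}$-combination, so the monotonicity \eqref{e:inclusion-p-asplund} must be invoked to bridge the two before the common-projection theorem can be applied. Everything else is a formal transcription of the proof of Theorem~\ref{t:PL-comvolsect}.
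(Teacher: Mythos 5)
Your proposal is correct and follows exactly the route the paper intends: it transcribes the proof of Theorem~\ref{t:PL-comvolsect}, converting the common maximal-section hypothesis into the common-projection condition for the $\Stsym{H^\perp}$-symmetrals, then chaining \eqref{e:Sym2presvol}, inequality \eqref{e:incSymII-pAsplund} of Proposition~\ref{p:BBL-inclu-sym}, the monotonicity \eqref{e:inclusion-p-asplund} to pass from $\star_q$ (with $q=p/(np+1)$) to $\star_{-\infty}$, and finally Theorem~\ref{t:BBL-commonproj}. This matches the paper's own proof (which it leaves as ``following the steps of'' Theorem~\ref{t:PL-comvolsect}, with the same parameter drop to $-\infty$ used in the proof of Theorem~\ref{t:BBLcommon_int_pro}), so there is nothing to add.
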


\bigskip

\noindent {\it Acknowledgements.}
This work was developed during a research stay of
the third author at the Dipartimento di Matematica ``U. Dini'', Firenze, Italy, supported by ``Ayudas para estancias breves de la Universidad de Murcia'', Spain.

The authors thank the anonymous referee for the careful reading of the paper and very useful suggestions which significantly improved the presentation.

\end{document}